\documentclass{article}
\usepackage{graphicx} 
\usepackage{amsmath}
\usepackage{amsfonts}
\usepackage{amsthm}
\usepackage{amssymb}
\usepackage{xcolor}
\usepackage{todonotes}

\newtheorem{thm}{Theorem}[section]

\newtheorem{lem}[thm]{Lemma}
\newtheorem{cor}[thm]{Corollary}

\theoremstyle{remark}
\newtheorem*{rmk}{Remark}
\theoremstyle{definition}
\newtheorem*{ex}{Example}

\DeclareMathOperator{\Res}{Res}
\DeclareMathOperator{\lcm}{lcm}


\def\thesistitle{Sequences related to Lehmer's problem}
\def\thesissubtitle{}
\def\thesisauthorfirst{Björn}
\def\thesisauthorsecond{Johannesson}
\def\thesissupervisorfirst{Wadim}
\def\thesissupervisorsecond{Zudilin}
\def\thesissecondreaderfirst{Riccardo}
\def\thesissecondreadersecond{Cristoferi}
\def\thesisdate{June 29, 2023}


\title{\thesistitle}
\author{\thesisauthorfirst\space\thesisauthorsecond}
\date{\thesisdate}


\begin{document}

\begin{titlepage}
	\thispagestyle{empty}
	\newcommand{\HRule}{\rule{\linewidth}{0.5mm}}
	\center
	\textsc{\Large Radboud University Nijmegen}\\[.7cm]
	\includegraphics[width=25mm]{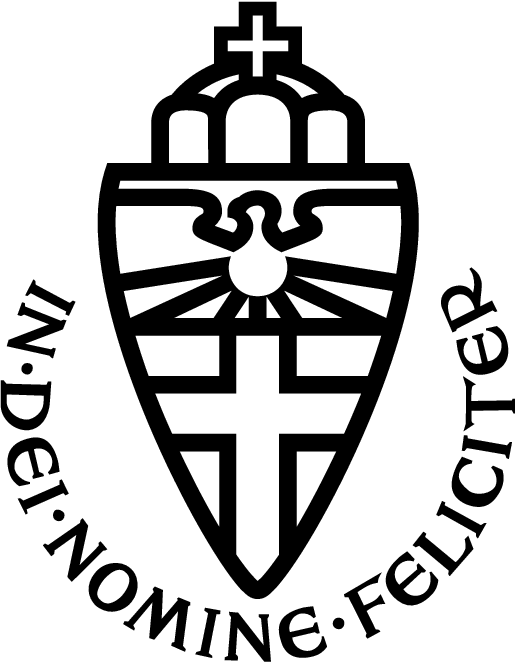}\\[.5cm]
	\textsc{Faculty of Science}\\[0.5cm]
	
	\HRule \\[0.4cm]
	{ \huge \bfseries \thesistitle}\\[0.1cm]
	\textsc{\thesissubtitle}\\
	\HRule \\[.5cm]
	\textsc{\large Thesis MSc Mathematics}\\[.5cm]
	
	\begin{minipage}{0.4\textwidth}
	\begin{flushleft} \large
	\emph{Author:}\\
	\thesisauthorfirst\space \textsc{\thesisauthorsecond}
	\end{flushleft}
	\end{minipage}
	~
	\begin{minipage}{0.4\textwidth}
	\begin{flushright} \large
	\emph{Supervisor:} \\
	\thesissupervisorfirst\space \textsc{\thesissupervisorsecond} \\[1em]
	\emph{Second reader:} \\
	\thesissecondreaderfirst\space \textsc{\thesissecondreadersecond}
	\end{flushright}
	\end{minipage}\\[4cm]
	\vfill
	{\large \thesisdate}\\
	\clearpage
\end{titlepage}

\begin{abstract}
    The Mahler measure of a monic polynomial $P(x) = a_dx^d+a_{d-1}x^{d-1}+\dots+a_1x+a_0$ is defined as
    \begin{displaymath}
        M(P) := |a_d| \prod_{P(\alpha)=0} \max\{1, |\alpha|\},
    \end{displaymath}
    where the product runs over all roots of $P$. Lehmer's problem asks whether there exists a constant $C>1$ such that $M(P) \geq C$ for all noncyclotomic polynomials in $\mathbb{Z}[x]$. In this thesis, we examine the properties of various integer sequences related to this problem, with special focus on how these sequences might help solving Lehmer's problem.
\end{abstract}

\tableofcontents

\newpage

\section*{Acknowledgements}
First and foremost I want to thank my supervisor Wadim Zudilin for his helpful comments, suggestions and criticisms throughout the process of writing this thesis. I also want to thank all friends and relatives who have shown interest in my project.

\newpage

\section{The Mahler measure}
Let $P(x) = a_dx^d + \dots + a_1x + a_0$ be a polynomial with roots $\alpha_1,\dots,\alpha_d \in \mathbb{C}$. Then the Mahler measure of $P$ is defined as
\begin{displaymath}
    M(P) := |a_d| \prod_{j=1}^d \max\{1, |\alpha_j|\}.
\end{displaymath}
Relatedly, we also define the logarithmic Mahler measure as follows:
\begin{displaymath}
    m(P) := \log(M(P)) = \log|a_d| + \sum_{j=1}^d \max\{0, \log|\alpha_j|\}.
\end{displaymath}
While these definitions make sense for any polynomial $P \in \mathbb{C}[x]$, we shall be concerned only with monic polynomials with integer coefficients. Clearly, for such polynomials we have that $M(P) \geq 1$. Moreover, it is possible to show that $M(P) = 1$ if and only if $P$ is cyclotomic or the monomial $x$. Another property of the Mahler measure, which follows immediately from the definition, is that for any polynomials $P,Q$ we have that $M(PQ) = M(P)M(Q)$.

In 1933, Lehmer \cite{Lehmer} asked the following quesion: given $\epsilon > 0$, does there exist a monic polynomial $P$ with integer coefficients such that $1 < M(P) < 1+\epsilon$? Lehmer also found the polynomial
\begin{displaymath}
    x^{10}+x^9-x^7-x^6-x^5-x^4-x^3+x+1,
\end{displaymath}
with Mahler measure $1.17628\dots$\,. To date, this is the lowest known Mahler measure.

While Lehmer's question itself remains open, a few partial solutions and related results can be mentioned. The first one of these concerns so-called reciprocal polynomials. A polynomial $P$ of degree $d$ is said to be reciprocal (or palindromic) if $P(x) = x^dP(x^{-1})$, or equivalently, if $a_{j} = a_{d-j}$ for $j=0,\dots,d$. Note that if $P$ is a reciprocal polynomial of odd degree, then $P(-1) = 0$, so irreducible non-cyclotomic reciprocal polynomials always have even degree. The following holds:
\begin{thm}[Smyth \cite{Reciprocal}]
    Let $P \in \mathbb{Z}[x]$ be a monic polynomial with the property that $M(P) < \theta_0$, where $\theta_0 = 1.32471\dots$ is the unique real root of the polynomial $x^3-x-1$. Then $P$ is reciprocal.
\end{thm}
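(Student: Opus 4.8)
The plan is to reduce to an irreducible $P$ and then to study the rational function $F(z)=z^{d}P(1/z)/P(z)=P^{*}(z)/P(z)$, whose behaviour on and inside the unit circle simultaneously encodes the Mahler measure and the failure of reciprocity. First the reductions: since $M$ is multiplicative and a product of reciprocal polynomials is reciprocal, it suffices to prove the contrapositive for irreducible $P$; one may also assume $M(P)>1$ (otherwise $P$ is cyclotomic or the monomial $x$, and the cyclotomic case is already reciprocal) and $M(P)<2$ (otherwise $M(P)>\theta_{0}$ is immediate). Writing $P(z)=\prod_{j}(z-\alpha_{j})$ we then get $|P(0)|=\prod_{j}|\alpha_{j}|\le M(P)<2$, so $|P(0)|=1$; put $\epsilon=P(0)\in\{\pm1\}$. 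An irreducible non-reciprocal $P$ has no root on $|z|=1$: if $|\beta|=1$ and $P(\beta)=0$ then $\bar\beta=1/\beta$ is also a root (real coefficients), so $1/\beta$ is a Galois conjugate of $\beta$ and, the roots forming a single orbit, $P$ would be reciprocal. Hence $\rho:=\min_{j}|\alpha_{j}|<1<\max_{j}|\alpha_{j}|=:\Lambda$; letting $P_{1},P_{2}$ be the monic factors of $P$ collecting the roots inside, resp.\ outside, the disc, we have $M(P)=|P_{2}(0)|=1/|P_{1}(0)|$.

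Next I would record the properties of $F=P^{*}/P$. On $|z|=1$ one has $|P^{*}(z)|=|P(z)|$, so $|F|\equiv1$ there; moreover $F(1/z)=1/F(z)$. Because $P(0)=\epsilon$ is a unit, $1/P\in\mathbb{Z}[[z]]$, so $F\in\mathbb{Z}[[z]]$ near $0$ with $F(0)=\epsilon$, and likewise $1/F=P/P^{*}\in\mathbb{Z}[[z]]$. Non-reciprocity of $P$ means exactly that $F$ is non-constant, and since $P^{*}-\epsilon P$ has both its leading and constant coefficients equal to $0$ we get $F(z)=\epsilon+c_{k}z^{k}+O(z^{k+1})$ with $k\ge1$ and $c_{k}\in\mathbb{Z}\setminus\{0\}$, hence $|c_{k}|\ge1$; comparing with $F\cdot(1/F)=1$ gives $1/F(z)=\epsilon-c_{k}z^{k}+O(z^{k+1})$. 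Factoring through $P_{1},P_{2}$ yields $F=B/C$ with $B=P_{2}^{*}/P_{2}$ and $C=P_{1}/P_{1}^{*}$; here $B$ and $C$ are finite Blaschke products (analytic in the closed disc, unimodular on the circle), and crucially $|B(0)|=1/|P_{2}(0)|=1/M(P)$ and $|C(0)|=|P_{1}(0)|=1/M(P)$. Writing $B=\sum b_{n}z^{n}$, $C=\sum C_{n}z^{n}$ with real coefficients, and expanding $B=FC$, one finds $b_{n}=\epsilon C_{n}$ for $n<k$ and $b_{k}-\epsilon C_{k}=c_{k}C_{0}$, so
\[
|b_{k}-\epsilon C_{k}|=|c_{k}|\,|C_{0}|\ge \tfrac{1}{M(P)}.
\]

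The analytic heart is then an extremal estimate in the single parameter $t=1/M(P)\in(0,1)$. From Parseval on $|z|=1$ we have $\sum b_{n}^{2}=\sum C_{n}^{2}=1$ with $b_{0}^{2}=C_{0}^{2}=t^{2}$; from the Schur--Cohn inequalities for the inner functions $B$ and $C$ we have $|b_{1}|\le 1-t^{2}$, $|C_{1}|\le 1-t^{2}$, together with the analogous constraints on the higher Schur parameters. The plan is to play the integrality bound $|b_{k}-\epsilon C_{k}|\ge t$ off against these constraints — the point being that when $t$ is close to $1$ the Schur--Cohn bounds force $b_{k}$ and $C_{k}$ (and the corresponding reduced sequences) to be too small for $b_{k}-\epsilon C_{k}$ to attain absolute value $t$. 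Carrying the Schur algorithm far enough, the constraints become compatible precisely at the threshold $t^{2}+t^{3}=1$, i.e.\ $M(P)^{3}=M(P)+1$, which is where $\theta_{0}$ enters. Since $x\mapsto x^{3}-x-1$ is negative on $(0,\theta_{0})$, the resulting inequality $M(P)^{3}\ge M(P)+1$ gives $M(P)\ge\theta_{0}$, contradicting $M(P)<\theta_{0}$; therefore $P$ is reciprocal.

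The routine parts are the reductions and the formal identities for $F$. The main obstacle is the extremal step: one must combine the single integrality constraint $|c_{k}|\ge1$ with enough of the Schur--Cohn structure of the Blaschke factors $B$ and $C$ to reach exactly the cubic threshold, rather than the weaker quadratic bounds (yielding only $M(P)\ge\tfrac{-1+\sqrt{17}}{2}$-type constants) that the first one or two Taylor coefficients alone produce — getting the constant to be sharply the root of $x^{3}-x-1$ is the delicate optimization. A secondary point is the bookkeeping needed to pass from the irreducible case back to an arbitrary monic $P$ (via multiplicativity of $M$) and to accommodate the degenerate $M(P)=1$ possibilities excluded above.
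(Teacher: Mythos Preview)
The paper does not prove this theorem at all: it is quoted from the literature with a citation to Smyth's original paper \cite{Reciprocal}, and is used only as background to motivate restricting attention to reciprocal polynomials. So there is no ``paper's own proof'' to compare against.

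That said, your outline is faithful to Smyth's original argument: the reduction to irreducible $P$ with $|P(0)|=1$, the rational function $F=P^{*}/P$ with unimodular boundary values and integer Taylor coefficients, the Blaschke factorisation $F=B/C$ with $|B(0)|=|C(0)|=1/M(P)$, and the interplay between integrality of $c_{k}$ and the Schur--Cohn constraints --- this is exactly the skeleton of Smyth's proof. You are also honest that the decisive extremal step is the hard part and is left as a plan rather than carried out; in Smyth's paper that step is a fairly intricate case analysis on the Schur parameters, and your sketch does not yet contain the mechanism that singles out the cubic $x^{3}-x-1$ rather than a weaker bound. As a roadmap it is accurate; as a proof it stops short precisely where the real work begins.
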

Clearly, the constant $\theta_0$ in this theorem is the best possible, as it equals $M(x^3-x-1)$, the Mahler measure of a non-reciprocal polynomial. Furthermore, this theorem implies that if we want to find a lower bound for $M(P)$, we can assume without loss of generality that $P$ is reciprocal. However, keeping potential other applications in mind, we will avoid assuming $P$ is reciprocal when it is not necessary to do so.

Another important result is the following, which gives a nontrivial lower bound for $m(P)$ (and thus $M(P)$) in terms of $d$:
\begin{thm}[Dobrowolski \cite{Dobr}]
    There exists a constant $c_0 > 0$ such that for any noncyclotomic polynomial $P \in \mathbb{Z}[x]$ of sufficiently high degree, we have that
    \begin{displaymath}
        m(P) \geq c_0\Bigg(\frac{\log \log d}{\log d}\Bigg)^3.
    \end{displaymath}
\end{thm}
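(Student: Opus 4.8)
The plan is to reproduce Dobrowolski's argument, whose engine is a two-sided estimate for a cleverly chosen nonzero rational integer attached to $P$ and to a range of small primes. First I would reduce to the case that $P$ is monic, irreducible, noncyclotomic, and of degree $d\ge 2$: by multiplicativity of $M$ (hence additivity of $m$), since $m$ does not increase when we pass to an irreducible factor, and because for each fixed degree the infimum of $m$ over noncyclotomic polynomials of that degree is a positive constant, a version of the bound for irreducible $P$ of large degree yields the general statement (with a smaller $c_0$). Fix the roots $\alpha_1,\dots,\alpha_d$ of $P$ and write $M=M(P)$, $m=m(P)$, $K=\mathbb{Q}(\alpha_1)$.

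The arithmetic input comes next. For a prime $p$ set $R_p:=\prod_{i=1}^d P(\alpha_i^p)=\pm\Res\big(P(x),P(x^p)\big)$, which is a rational integer (a symmetric integer polynomial in the $\alpha_i$). Two facts: (i) $p^d\mid R_p$, because $P(y)^p\equiv P(y^p)\pmod p$ in $\mathbb{Z}[y]$ (Fermat on the coefficients), so evaluating at $\alpha_i$ gives $P(\alpha_i^p)\in p\,\mathcal{O}_K$, whence $R_p=N_{K/\mathbb{Q}}\big(P(\alpha_1^p)\big)$ is divisible by $p^d$; (ii) $R_p\ne 0$, since if $\alpha_i^p=\alpha_j$ for some $i,j$ then Galois transitivity forces $\beta\mapsto\beta^p$ to map the set of conjugates into itself, so the largest modulus $r=\max_i|\alpha_i|$ satisfies $r^p\le r$, i.e.\ all $|\alpha_i|\le 1$, and Kronecker's theorem then makes $\alpha_1$ a root of unity, contradicting that $P$ is noncyclotomic. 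Hence $|R_p|\ge p^d$.

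Now the heart of the matter. Fix a parameter $N$, let $k=\pi(N)$, and suppose for contradiction that $m<c_0\big(\tfrac{\log\log d}{\log d}\big)^3$. One forms a nonzero rational integer $\mathcal{E}$ out of the data $\{\alpha_i^p: i\le d,\ p\le N\}$ — in Dobrowolski's treatment a product built from the $R_p$, in the Cantor--Straus variant a generalized Vandermonde determinant in the $\alpha_i^p$ — arranged so that (a) $\mathcal{E}\ne 0$ by the same Kronecker-type obstruction, so $|\mathcal{E}|\ge 1$; (b) a large power of $\prod_{p\le N}p$ divides $\mathcal{E}$, giving a lower bound $\log|\mathcal{E}|\gtrsim d\,\theta(N)$ with $\theta$ the Chebyshev function; and (c) an analytic upper bound $\log|\mathcal{E}|\le A(d,N)\,m+B(d,N)$ holds, where — using that $m$ small forces every root into the annulus near the unit circle, so that $|\alpha_i^p-\alpha_j|\le 2e^{pm_i+m_j}$ with $\sum_i m_i=m$ — one can take $A(d,N)$ of order $d\,\pi(N)^2\sum_{p\le N}p$ and, crucially, $B(d,N)=o\big(d\,\theta(N)\big)$. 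Combining (b) and (c) gives $d\,\theta(N)\lesssim A(d,N)\,m$, i.e.\ $m\gtrsim \theta(N)\big/\big(d\,\pi(N)^2\sum_{p\le N}p\big)$. Inserting the Chebyshev asymptotics $\theta(N)\asymp N$, $\pi(N)\asymp N/\log N$, $\sum_{p\le N}p\asymp N^2/\log N$ and choosing $N\asymp\log d$ yields $m\gtrsim(\log\log d/\log d)^3$, contradicting the hypothesis; the exponent $3$ emerges from this optimization (two factors of $\log\log d$ from $\pi(N)^2$ and one from $\sum_{p\le N}p$).

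The main obstacle is step (c), the analytic upper bound — specifically securing an additive error $B(d,N)$ of size $o\big(d\,\theta(N)\big)$. A naive bound on a single $R_p$ of the shape $|R_p|\le 2^{d^2}M^{d(p+1)}$ carries a spurious factor $2^{d^2}$, which, summed over $p\le N\asymp\log d$, dwarfs the lower bound $d\,\theta(N)\asymp d\log d$ and destroys the argument. Removing this loss is exactly what forces the careful construction of $\mathcal{E}$ — the Vandermonde-determinant organization, in which only $O(d)$ rather than $O(d^2)$ differences $\alpha_i^p-\alpha_j$ enter each slot, with Hadamard's inequality supplying the upper bound and the Schur-function structure of the generalized Vandermonde supplying the $p$-divisibility. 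Getting that bookkeeping right, together with verifying the non-vanishing of $\mathcal{E}$, is where the real work lies; the Chebyshev estimates and the final optimization over $N$ are then routine.
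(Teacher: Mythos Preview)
Your overall strategy matches the paper's (which follows Cantor--Straus), and your treatment of the arithmetic input --- the divisibility $p^d\mid R_p$ and the Kronecker argument for $R_p\ne 0$ --- is correct. The genuine gap is in the construction of $\mathcal{E}$. The paper's $\mathcal{E}$ is a \emph{confluent} Vandermonde determinant whose nodes are the original roots $\alpha_1,\dots,\alpha_d$, each taken with multiplicity $k\approx\log d/\log\log d$, together with the prime powers $\alpha_i^{p_j}$ (multiplicity~$1$) for the first $s\approx\tfrac12(\log d/\log\log d)^2$ primes. The divisibility by $\prod_j p_j^{kd}$ comes from the factors $\Res(P_{p_j},P)^k$ in the confluent product formula of Lemma~\ref{Vandermonde}, and those factors are present only because the \emph{original} roots sit among the nodes; a Vandermonde built purely from the $\alpha_i^p$ (primes only), as you describe, would instead produce factors $\Res(P_p,P_q)$ for distinct primes, for which no divisibility lemma is available. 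The high multiplicity $k$ on the original nodes is equally essential on the analytic side: it is what makes the Hadamard upper bound balance against the divisibility lower bound. Your stated shape $A(d,N)\asymp d\,\pi(N)^2\sum_{p\le N}p$ and the choice $N\asymp\log d$ do not arise from any construction I can identify --- in the paper the primes actually run up to $p_s\sim(\log d)^2/\log\log d$, and the two free parameters $k,s$ (not a single cutoff $N$) are optimised separately.

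Second, the non-vanishing of $\mathcal{E}$ is not ``the same Kronecker-type obstruction'' you used for $R_p$. Your argument excludes only $\alpha_i^p=\alpha_j$; for the full determinant one must also rule out $\alpha_i^p=\alpha_j^p$ with $i\ne j$ (and coincidences among different prime powers). The paper handles this by a separate reduction, Lemma~\ref{replacepol}: if some $\alpha^n=\beta^n$ occurs one replaces $P$ by a polynomial of strictly smaller degree with the same Mahler measure. That step is not a consequence of Kronecker's theorem and is an additional ingredient you would need to supply.
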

In Dobrowolski's original proof, the constant was $1-\epsilon$, where $\epsilon > 0$ is arbitrary. He also mentioned that if we take $c_0 = \frac{1}{1200}$, the inequality holds for all $d$. Since then, the constant has been improved to $2-\epsilon$ by Cantor and Straus, and to $\frac 94-\epsilon$ by Louboutin. Note that Dobrowolski's lower bound for $m(P)$ tends to zero as $d \to \infty$, so it does not fully solve Lehmer's problem.

\section{Main results}
Throughout this thesis, $P(x) = x^d + a_{d-1}x^{d-1} + \dots + a_1x + a_0$ will be a monic polynomial over $\mathbb{Z}$ with roots $\alpha_1,\dots,\alpha_d \in \mathbb{C}$. For any integer $n \geq 1$, we define
\begin{displaymath}
    P_n(x) = \prod_{j=1}^d (x-\alpha_k^n),
\end{displaymath}
which is again a monic polynomial over $\mathbb{Z}$. Let $\Delta(P_n)$ denote the discriminant of $P_n$.

Our first main result (see Theorem \ref{Un}) is a generalisation of Dobrowolski's observation that for any prime $p$, the resultant $\Res(P_p,P)$ is divisible by $p$.

Next, we will prove a number of results about the sequence $\{\Delta(P_n)\}$.
\begin{thm}
    The following holds:
    \begin{displaymath}
        \sum_{m \mid n} \mu\Big(\frac nm\Big) \Delta(P_m) \equiv 0 \mod n,
    \end{displaymath}
    where $\mu$ is the Möbius function.
\end{thm}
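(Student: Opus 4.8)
The plan is to recast the claim as an integrality statement for an exponential generating function, in the spirit of the Witt-vector / ``ghost component'' (Dwork) congruences. First I would dispose of a degenerate case: if $P$ is not separable, say $\alpha_i=\alpha_j$ with $i\ne j$, then $\alpha_i^m=\alpha_j^m$ for every $m$, so $P_m$ has a repeated root and $\Delta(P_m)=0$ for all $m\ge1$; the asserted congruence is then $0\equiv0$. So assume henceforth that $P$ is separable.

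The next step is to rewrite $\Delta(P_n)$. Let $D(x_1,\dots,x_d)=\prod_{i<j}(x_i-x_j)^2\in\mathbb{Z}[x_1,\dots,x_d]$ be the discriminant polynomial, which is symmetric, so that $\Delta(P_n)=D(\alpha_1^n,\dots,\alpha_d^n)$. Expanding $D=\sum_{\mathbf e}c_{\mathbf e}\,x^{\mathbf e}$ as a finite $\mathbb{Z}$-linear combination of monomials $x^{\mathbf e}=x_1^{e_1}\cdots x_d^{e_d}$ gives
\[
\Delta(P_n)=\sum_{\mathbf e}c_{\mathbf e}\,\gamma_{\mathbf e}^{\,n},\qquad\text{where}\quad\gamma_{\mathbf e}:=\prod_{i=1}^d\alpha_i^{e_i}
\]
is an algebraic integer (a product of roots of the monic integer polynomial $P$). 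I would then study
\[
E(t):=\exp\Big(\sum_{n\ge1}\frac{\Delta(P_n)}{n}\,t^n\Big)=\prod_{\mathbf e}(1-\gamma_{\mathbf e}t)^{-c_{\mathbf e}},
\]
the second equality being the identity $-\log(1-\gamma t)=\sum_{n\ge1}\gamma^n t^n/n$ applied termwise. The key point is that $E(t)\in\mathbb{Z}[[t]]$, for two reasons. Its coefficients are algebraic integers: a factor with $c_{\mathbf e}\le0$ is a polynomial in $t$ over $\overline{\mathbb{Z}}$, while a factor with $c_{\mathbf e}\ge1$ equals $\sum_{n\ge0}\binom{n+c_{\mathbf e}-1}{n}\gamma_{\mathbf e}^n t^n$, again with algebraic-integer coefficients. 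And $E(t)$ is fixed by $\mathrm{Gal}(\overline{\mathbb{Q}}/\mathbb{Q})$: any automorphism permutes $\alpha_1,\dots,\alpha_d$, hence permutes the monomials $\gamma_{\mathbf e}$, and since $D$ is symmetric this permutation carries each exponent $c_{\mathbf e}$ along, so it merely reshuffles the factors of the product. Hence the coefficients of $E(t)$ lie in $\mathbb{Q}\cap\overline{\mathbb{Z}}=\mathbb{Z}$.

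To finish I would invoke the classical equivalence behind Dwork's lemma / the necklace congruences: for an integer sequence $(b_n)_{n\ge1}$,
\[
\exp\Big(\sum_{n\ge1}\tfrac{b_n}{n}t^n\Big)\in\mathbb{Z}[[t]]\quad\Longleftrightarrow\quad n\mid\sum_{m\mid n}\mu(n/m)\,b_m\quad\text{for all }n\ge1.
\]
This has a short proof: write $\exp(\sum b_nt^n/n)=\prod_{k\ge1}(1-t^k)^{-a_k}$ with $a_k=\tfrac1k\sum_{m\mid k}\mu(k/m)b_m\in\mathbb{Q}$, and check by induction on $k$ that membership in $\mathbb{Z}[[t]]$ forces $a_k\in\mathbb{Z}$; I would either reproduce it or cite it. Applied with $b_n=\Delta(P_n)$, the integrality of $E(t)$ established above is exactly the desired congruence.

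I expect the routine parts to be the generating-function bookkeeping, and the part requiring care to be the proof that $E(t)\in\mathbb{Z}[[t]]$ — specifically the Galois-symmetry argument that upgrades ``coefficients in $\mathbb{Q}$'' to ``coefficients in $\mathbb{Z}$'' (which is where separability and the symmetry of the discriminant polynomial are used) — together with having the Step-3 lemma at hand. As an alternative to that lemma one can group the $\gamma_{\mathbf e}$ into Galois orbits $O_l$, on each of which $c_{\mathbf e}$ is constant (value $c_l$), so that $\Delta(P_n)=\sum_l c_l\sum_{\gamma\in O_l}\gamma^n=\sum_l c_l\operatorname{tr}(M_l^n)$ with $M_l$ the companion matrix of the monic integer polynomial $\prod_{\gamma\in O_l}(x-\gamma)$; then $E(t)=\prod_l\det(I-tM_l)^{-c_l}$ is manifestly in $\mathbb{Z}[[t]]$, although one still needs the integer-matrix case of the Gauss congruence, which amounts to the same lemma.
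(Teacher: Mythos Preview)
Your proof is correct and follows a genuinely different route from the paper's. The paper first proves a polynomial-level Gauss congruence $\sum_{m\mid n}\mu(n/m)P_m\equiv 0\pmod n$ (its Lemma~\ref{Gausspol}), deduces $\Delta(P_{p^k})\equiv\Delta(P_{p^{k-1}})\pmod{p^k}$ from it, and then handles composite $n$ prime by prime via an explicit partition of the divisors according to whether $p^k$ divides them. You instead write $\Delta(P_n)$ as a $\mathbb{Z}$-linear combination of $n$-th powers of algebraic integers, show the associated exponential generating function lies in $\mathbb{Z}[[t]]$, and invoke the Dwork/necklace equivalence. Your approach is more conceptual and immediately applies to any sequence of the shape $\sum_j c_j\gamma_j^{\,n}$ with suitable Galois compatibility---indeed it essentially reproves the paper's later Theorem~\ref{Minton} along the way---whereas the paper's argument is more elementary and yields the intermediate Lemma~\ref{Gausspol}, which it reuses elsewhere (e.g.\ Corollary~\ref{Unspecial} and Lemma~\ref{Unspecialzp}). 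One minor simplification: your Galois-invariance step is not needed to establish rationality of the coefficients of $E(t)$, since $\Delta(P_n)\in\mathbb{Z}$ already forces $E(t)\in\mathbb{Q}[[t]]$; combined with the algebraic-integer coefficients coming from the product form, $\mathbb{Q}\cap\overline{\mathbb{Z}}=\mathbb{Z}$ finishes it directly.
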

Congruences like these are known as Gauss congruences.

\begin{thm}
    For any monic polynomial $P \in \mathbb{Z}[x]$ the following holds:
    \begin{displaymath}
        M(P) \geq \limsup_{n\to\infty} |\Delta(P_n)|^\frac{1}{2nd}.
    \end{displaymath}
\end{thm}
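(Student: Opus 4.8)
The plan is to bound $|\Delta(P_n)|$ from above directly in terms of the roots, and then extract $2nd$-th roots before passing to the limit. Recall that, up to sign, $\Delta(P_n) = \prod_{1\le i<j\le d}(\alpha_i^n-\alpha_j^n)^2$. If $P$ has a repeated root then $\Delta(P_n)=0$ for every $n$ and the inequality is trivial (the right-hand side is $0$, while $M(P)\ge 1$), so I may assume $\alpha_1,\dots,\alpha_d$ are distinct. Put $A_j:=\max\{1,|\alpha_j|\}$, so that, since $P$ is monic, $M(P)=\prod_{j=1}^d A_j$.

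For each pair $i<j$ the triangle inequality gives
\begin{displaymath}
    |\alpha_i^n-\alpha_j^n| \;\le\; |\alpha_i|^n+|\alpha_j|^n \;\le\; A_i^n+A_j^n \;\le\; 2\,A_i^n A_j^n,
\end{displaymath}
where the last step uses $A_i,A_j\ge 1$. Squaring and multiplying over all $\binom d2$ pairs, and using that each index $k\in\{1,\dots,d\}$ occurs in exactly $d-1$ of these pairs (so that $\prod_{i<j}A_i A_j=\prod_{k=1}^d A_k^{\,d-1}$), I obtain
\begin{displaymath}
    |\Delta(P_n)| \;\le\; 4^{\binom d2}\prod_{i<j}A_i^{2n}A_j^{2n} \;=\; 4^{\binom d2}\Big(\prod_{k=1}^d A_k\Big)^{2n(d-1)} \;=\; 4^{\binom d2}\,M(P)^{2n(d-1)}.
\end{displaymath}
Taking $2nd$-th roots yields $|\Delta(P_n)|^{1/(2nd)} \le 4^{\binom d2/(2nd)}\,M(P)^{(d-1)/d}$, and since $4^{\binom d2/(2nd)}\to 1$ as $n\to\infty$, letting $n\to\infty$ gives $\limsup_{n\to\infty}|\Delta(P_n)|^{1/(2nd)}\le M(P)^{(d-1)/d}\le M(P)$, the last inequality because $M(P)\ge 1$. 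This proves the claim.

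There is no serious obstacle here; the argument is elementary and the only points needing a little care are the degenerate case of repeated roots and the exponent bookkeeping in the product over pairs. It is worth noting that the proof actually yields the slightly sharper bound with exponent $(d-1)/d$ in place of $1$, that it uses the triangle inequality rather than the integrality of $\Delta(P_n)$, and that any matching lower bound — which would be the interesting direction for Lehmer's problem — would require genuinely different (arithmetic) input.
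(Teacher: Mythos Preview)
Your proof is correct. It differs from the paper's route: the paper realises $\Delta(P_n)$ as the square of the Vandermonde determinant in the $\alpha_j^n$ and applies Hadamard's inequality to get $|\Delta(P_n)|\le d^d\,M(P)^{2nd}$, then extracts the desired inequality by a limiting argument (their Lemma~\ref{mpgeqc} and Corollary~\ref{upperboundlimsup}). You instead bound each factor $|\alpha_i^n-\alpha_j^n|$ directly by $2A_i^nA_j^n$ via the triangle inequality and the trivial bound $A_i,A_j\ge 1$, obtaining $|\Delta(P_n)|\le 4^{\binom d2}M(P)^{2n(d-1)}$.

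Both arguments are elementary and yield the same conclusion after taking $2nd$-th roots. Your factor-by-factor estimate is a bit more direct (no determinant inequality needed) and makes the sharper exponent $(d-1)/d$ explicit; the paper in fact uses essentially your estimate later, in the proof of Theorem~6.5, when it writes $|\alpha_j^n-\alpha_k^n|\le 2|\alpha_j|^n$ to pin down $\limsup |\Delta(P_n)|^{1/n}$ exactly. The Hadamard approach, on the other hand, is the one that generalises naturally to the confluent Vandermonde determinants used in the Cantor--Straus proof of Dobrowolski's theorem, which is presumably why the paper prefers it here.
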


\begin{thm}
    Let $P$ be a monic polynomial with distinct roots, and suppose that $m$ of the roots lie outside of the unit circle. Then
    \begin{displaymath}
        \limsup_{n\to\infty} |\Delta(P_n)|^{\frac{1}{n}} \geq M(P)^d|a_0|^{d-m-1} \geq M(P)^{d-1}.
    \end{displaymath}
\end{thm}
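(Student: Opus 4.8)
The plan rests on the discriminant factorisation for the monic polynomial $P_n$: from $P_n(x)=\prod_j(x-\alpha_j^n)$ one gets $\Delta(P_n)=\prod_{i<j}(\alpha_i^n-\alpha_j^n)^2$, hence $|\Delta(P_n)|^{1/n}=\prod_{i<j}|\alpha_i^n-\alpha_j^n|^{2/n}$. Relabel the roots so that $r_1\ge r_2\ge\cdots\ge r_d$ with $r_i:=|\alpha_i|$; then $r_1\ge\cdots\ge r_m>1\ge r_{m+1}\ge\cdots\ge r_d$, while $M(P)=r_1\cdots r_m$ and, assuming as we may that $a_0\neq0$, $|a_0|=r_1\cdots r_d$. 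I will prove the chain
\[
\limsup_{n\to\infty}|\Delta(P_n)|^{1/n}\ \ge\ \prod_{i<j}\max(r_i,r_j)^2\ \ge\ M(P)^d|a_0|^{d-m-1}\ \ge\ M(P)^{d-1},
\]
of which only the first inequality is analytic. For a pair $i<j$ with $r_i>r_j$ we have $|\alpha_i^n-\alpha_j^n|=r_i^n\,|1-(\alpha_j/\alpha_i)^n|$ and the bracket tends to $1$, so $|\alpha_i^n-\alpha_j^n|^{1/n}\to r_i=\max(r_i,r_j)$.

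The crux is a pair with $r_i=r_j$: distinctness gives $\alpha_i\neq\alpha_j$, so $\beta_{ij}:=\alpha_i/\alpha_j$ lies on the unit circle and is not $1$. I claim there are $\varepsilon>0$ and an infinite set $N\subseteq\mathbb{Z}_{>0}$ with $|\beta_{ij}^{\,n}-1|\ge\varepsilon$ for all such pairs and all $n\in N$. Granting this, for $n\in N$ we have $\varepsilon\,r_i^n\le|\alpha_i^n-\alpha_j^n|=r_i^n|\beta_{ij}^{\,n}-1|\le 2r_i^n$, so $|\alpha_i^n-\alpha_j^n|^{1/n}\to r_i=\max(r_i,r_j)$ along $N$ as well, since $\varepsilon^{1/n},2^{1/n}\to1$. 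To build $N$, split the equal-modulus pairs by whether $\beta_{ij}$ is a root of unity. If $\beta_{ij}$ has finite order $q_{ij}\ge2$, then for $n$ coprime to $Q:=\lcm_{ij}q_{ij}$ one has $\beta_{ij}^{\,n}\neq1$, indeed $|\beta_{ij}^{\,n}-1|\ge 2\sin(\pi/q_{ij})$. If $\beta_{ij}=e^{2\pi i\gamma_{ij}}$ with $\gamma_{ij}$ irrational, then by Weyl's theorem (equivalently Kronecker's) $\{n\gamma_{ij}\}$ equidistributes on $[0,1)$ — and still does when $n$ is restricted to a fixed residue class mod $Q$, since $Q\gamma_{ij}$ remains irrational — so a union bound over these finitely many pairs shows that the $n$ with every $\|n\gamma_{ij}\|\ge\delta$ form a positive-density subset of $\{n:\gcd(n,Q)=1\}$ once $\delta$ is small. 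Intersecting the two conditions gives an infinite $N$, with $\varepsilon=\min\bigl(\min_{ij}2\sin(\pi/q_{ij}),\,2\sin(\pi\delta)\bigr)$.

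Combining the two cases, along $N$ each factor $|\alpha_i^n-\alpha_j^n|^{2/n}\to\max(r_i,r_j)^2$, so $\limsup_n|\Delta(P_n)|^{1/n}\ge\prod_{i<j}\max(r_i,r_j)^2$ (from $|\alpha_i^n-\alpha_j^n|\le 2\max(r_i,r_j)^n$ one even gets equality, but that is not needed). With the chosen ordering $\prod_{i<j}\max(r_i,r_j)=\prod_{i=1}^d r_i^{\,d-i}$, so writing $\rho_i=\log r_i$ (a non-increasing sequence) the second inequality of the chain becomes
\[
\sum_{i=1}^d 2(d-i)\rho_i\ \ge\ d\sum_{i=1}^m\rho_i+(d-m-1)\sum_{i=1}^d\rho_i .
\]
The difference of the sides has coefficient $m+1-2i$ in front of $\rho_i$ for $i\le m$ and $d+m+1-2i$ for $i>m$; each coefficient sequence is non-increasing in $i$ and, by a short computation, sums to zero over its index range. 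Applying Chebyshev's sum inequality separately on $\{1,\dots,m\}$ and on $\{m+1,\dots,d\}$ — each time against the non-increasing sequence $(\rho_i)$ — makes both contributions nonnegative, proving the inequality.

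For the last step, $M(P)^d|a_0|^{d-m-1}\ge M(P)^{d-1}$ is equivalent to $M(P)\,|a_0|^{d-m-1}\ge1$, which holds since $M(P)\ge1$, $|a_0|$ is a nonzero integer hence $\ge1$, and $d-m-1\ge-1$ with the exponent negative only if $m=d$, in which case $|a_0|=r_1\cdots r_d=M(P)$ and the product is exactly $1$. (If $a_0=0$, apply the result to $P/x$; the analytic step is unchanged and the arithmetic needs only a minor tweak from $x^2\nmid P$.) The part I expect to be the genuine obstacle is producing the \emph{single} infinite exponent set $N$ on which every equal-modulus difference $\alpha_i^n-\alpha_j^n$ has full growth rate simultaneously: with no modulus-one multiplicative relations among the roots this is easy, but in general it needs the equidistribution input above, and one must check that thinning $n$ to deal with root-of-unity ratios does not spoil equidistribution for the irrational ones.
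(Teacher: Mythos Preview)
Your proof is correct and the analytic core matches the paper's: both order the roots by modulus, note $|\alpha_i^n-\alpha_j^n|^{1/n}\to\max(r_i,r_j)$ trivially when $r_i\neq r_j$, and for equal-modulus pairs produce a single infinite set of exponents on which all angular differences are bounded away from zero. Your handling of the equal-modulus case differs in detail: you separate root-of-unity ratios (dealt with by restricting to $n$ coprime to $Q$) from irrational ratios (Weyl equidistribution on a residue class), whereas the paper folds both cases into one density lemma (Lemma~\ref{equidistribution}) which bounds the upper density of ``bad'' $n$ for each $x_j$ by $3\delta$ regardless of rationality. The paper's route is slightly cleaner since it avoids the thinning-to-residue-classes step you flag as the potential obstacle, but your version is equally valid.

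The genuine divergence is in the middle inequality $\prod_i r_i^{2(d-i)}\ge M(P)^d|a_0|^{d-m-1}$. The paper argues directly with products: it writes $\prod_{j>m}\alpha_j=a_0\prod_{j\le m}\alpha_j^{-1}$, bounds $\prod_{j>m}|\alpha_j|^{2(d-j)}\ge\prod_{j>m}|\alpha_j|^{d-m-1}$ (since $|\alpha_j|\le1$ and $2(d-j)\le d-m-1$ there), and then checks the exponent $d-2j+m+1\ge d$ on $\{1,\dots,m\}$. Your approach---taking logs and applying Chebyshev's sum inequality separately on $\{1,\dots,m\}$ and $\{m+1,\dots,d\}$, using that the coefficient sequences are non-increasing with zero sum on each block---is a tidy and more structural alternative that makes the rearrangement-inequality nature of the step transparent. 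Both yield the result; yours is arguably more illuminating as to \emph{why} the inequality holds.
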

Here, the second inequality follows directly from the definition of the Mahler measure.

These two theorems together imply that Lehmer's problem can be restated in terms of the sequence $\{\Delta(P_n)\}$.
\begin{thm}
    The following are equivalent:
    \begin{itemize}
        \item There exists a constant $C > 1$ such that $M(P) \geq C$ for all noncyclotomic polynomials $P \in \mathbb{Z}[x]$.
        \item There exists a constant $C > 1$ such that $\limsup_{n\to\infty} |\Delta(P_n)|^{\frac{1}{nd}} \geq C$ for all noncyclotomic polynomials $P \in \mathbb{Z}[x]$ without multiple roots.
    \end{itemize}
\end{thm}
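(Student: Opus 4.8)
\emph{Proof strategy.} The plan is to read this theorem as a repackaging of the two preceding results: each implication should follow by chaining the appropriate inequality while keeping track of exponents, together with one routine reduction to squarefree polynomials. For a monic $P \in \mathbb{Z}[x]$ of degree $d$ I will abbreviate $\beta(P) := \limsup_{n\to\infty} |\Delta(P_n)|^{1/(nd)}$. The two inputs I will use are: (i) $M(P) \ge \beta(P)^{1/2}$ for every monic $P \in \mathbb{Z}[x]$, which is the preceding theorem once one writes $|\Delta(P_n)|^{1/(2nd)} = \bigl(|\Delta(P_n)|^{1/(nd)}\bigr)^{1/2}$ and uses that $t \mapsto t^{1/2}$ is continuous and increasing on $[0,\infty)$; and (ii) $\beta(P) \ge M(P)^{(d-1)/d}$ for every monic $P$ with distinct roots, obtained from the bound $\limsup_n |\Delta(P_n)|^{1/n} \ge M(P)^{d-1}$ by raising to the power $1/d$ inside the $\limsup$ (legitimate for the same reason). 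The only point that is not purely formal is that the constants must come out independent of $d$, and they will, because the exponent $1/2$ in (i) is fixed while $(d-1)/d \ge 1/2$ for all $d \ge 2$.

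\emph{First bullet $\Rightarrow$ second bullet.} Assuming the first bullet holds with a constant $C > 1$, I would fix a noncyclotomic $P$ without multiple roots, of degree $d \ge 2$, and apply (ii): $\beta(P) \ge M(P)^{(d-1)/d} \ge C^{(d-1)/d} \ge C^{1/2}$, giving the second bullet with the degree-independent constant $C^{1/2} > 1$. I would note that degree-one inputs are degenerate — for $P = x-a$ one has $\Delta(P_n) = 1$ for every $n$, so the second bullet is to be read with $d \ge 2$, which costs nothing for Lehmer's problem since then $M(P) \ge 2$.

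\emph{Second bullet $\Rightarrow$ first bullet.} Assuming the second bullet holds with a constant $C > 1$, I would start from an arbitrary noncyclotomic $P$ and pass to its radical $\tilde P$, the product of its distinct monic irreducible factors. By multiplicativity of $M$ and $M(Q) \ge 1$ for each factor $Q$ one has $M(\tilde P) \le M(P)$, and $\tilde P$ remains noncyclotomic since it keeps an irreducible factor of Mahler measure $> 1$. If $\deg \tilde P \ge 2$, the hypothesis and (i) give $M(P) \ge M(\tilde P) \ge \beta(\tilde P)^{1/2} \ge C^{1/2}$; if $\deg \tilde P = 1$, then $P$ is a power of a single linear polynomial $x - a$ and noncyclotomicity forces $|a| \ge 2$, so $M(P) \ge 2$. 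Either way $M(P) \ge \min\{2, C^{1/2}\} > 1$.

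\emph{Main obstacle.} I do not expect a genuinely hard step once the two preceding theorems are available — the work is exponent bookkeeping plus the squarefree reduction. The two things I would be careful about are (a) checking that the constant does not deteriorate as $d$ grows, which is handled by $(d-1)/d \ge 1/2$ together with the fixed exponent $1/2$, and (b) treating the degenerate inputs correctly — the monomial $x$, repeated factors, and linear polynomials — so that the property ``noncyclotomic'' really does pass to the squarefree radical. All the mathematical substance lives in inequalities (i) and (ii).
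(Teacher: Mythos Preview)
Your proposal is correct and follows the paper's own route: the paper simply chains Corollary~\ref{upperboundlimsup} and Theorem~\ref{lowerboundlimsup} into the single line
\[
M(P) \ge \limsup_{n\to\infty} |\Delta(P_n)|^{\frac{1}{2nd}} \ge M(P)^{\frac{d-1}{2d}}
\]
and declares the equivalence. Your write-up is essentially the same argument made explicit, and in fact more careful than the paper on two points the paper glosses over: the squarefree reduction in the direction second $\Rightarrow$ first (the first bullet quantifies over all noncyclotomic $P$, not just those with distinct roots), and the degenerate degree-one case $P=x-a$, where $\Delta(P_n)=1$ so the second bullet must tacitly be read with $d\ge 2$.
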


We also prove the following result about the generating function of the sequence $\{\Delta(P_n)\}$:
\begin{thm}
    There exist polynomials $u_1,\dots,u_r \in \mathbb{Z}[z]$ and integers $c_1,\dots,c_r$ such that
    \begin{displaymath}
        \sum_{n=1}^\infty \Delta(P_n)z^n = \sum_{j=1}^r c_j\frac{zu_j'(z)}{u_j(z)}.
    \end{displaymath}
\end{thm}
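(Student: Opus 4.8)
The plan is to write $\Delta(P_n)$ as a fixed integer linear combination of geometric sequences $n\mapsto\gamma^n$ with each $\gamma$ an algebraic integer, and then identify the resulting rational generating function with a sum of logarithmic derivatives. Since $P$ is monic, its roots $\alpha_1,\dots,\alpha_d$ are algebraic integers and
\[
\Delta(P_n)=\prod_{1\le i<j\le d}(\alpha_i^n-\alpha_j^n)^2 .
\]
If $P$ has a repeated root this vanishes for every $n$ and the statement is trivial, so I may assume the $\alpha_k$ are distinct. Multiplying out the $d(d-1)$ binomial factors, every resulting term equals $\pm\bigl(\prod_{k}\alpha_k^{m_k}\bigr)^n$ for some exponent vector $(m_k)$ with $m_k\ge 0$ and $\sum_k m_k=d(d-1)$, and the multiset of exponent vectors (with signs) that occurs does not depend on $n$. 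Collecting equal monomials I obtain an identity valid for all $n\ge 1$,
\[
\Delta(P_n)=\sum_{\gamma\in S}b_\gamma\,\gamma^n,\qquad b_\gamma\in\mathbb{Z},
\]
where $S$ is the finite set of distinct values $\prod_k\alpha_k^{m_k}$ occurring; every $\gamma\in S$ is an algebraic integer.

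The key step is to check that $b_\gamma$ is constant on Galois orbits. Let $K$ be the splitting field of $P$ and $G=\mathrm{Gal}(K/\mathbb{Q})$. Each $\sigma\in G$ permutes $\alpha_1,\dots,\alpha_d$, hence permutes $S$, sending $\prod_k\alpha_k^{m_k}$ to the monomial with the correspondingly permuted exponent vector. Since $\prod_{i<j}(\beta_i-\beta_j)^2$ is a symmetric function of $\beta_1,\dots,\beta_d$, the integer coefficient attached to each exponent vector is invariant under permutations of the variables; passing to the $\alpha_k$ and collecting, this gives $b_{\sigma\gamma}=b_\gamma$. (Equivalently: apply $\sigma$ to the identity above, use $\Delta(P_n)\in\mathbb{Z}$, and invoke the uniqueness of the representation of a sequence as a $\mathbb{C}$-linear combination of the geometric sequences $n\mapsto\gamma^n$ attached to distinct nonzero $\gamma$; the value $\gamma=0$, if it occurs, contributes nothing for $n\ge 1$ and may be discarded.) Thus the nonzero elements of $S$ split into Galois orbits $O_1,\dots,O_r$, with $b_\gamma$ equal to a fixed integer $c_j$ on $O_j$.

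Finally I pass to generating functions. For each $j$, let $g_j\in\mathbb{Z}[x]$ be the minimal polynomial over $\mathbb{Q}$ of a member of $O_j$; it is monic because the elements of $O_j$ are algebraic integers, and (as $K/\mathbb{Q}$ is Galois) its set of roots is exactly $O_j$. Put $u_j(z):=z^{\deg g_j}\,g_j(1/z)=\prod_{\gamma\in O_j}(1-\gamma z)\in\mathbb{Z}[z]$; then $u_j(0)=1$ and a short computation gives $\dfrac{z\,u_j'(z)}{u_j(z)}=-\sum_{\gamma\in O_j}\dfrac{\gamma z}{1-\gamma z}$. Summing the geometric series in the expansion of $\Delta(P_n)$,
\begin{align*}
\sum_{n=1}^\infty\Delta(P_n)\,z^n
&=\sum_{\gamma\in S,\ \gamma\ne 0}b_\gamma\,\frac{\gamma z}{1-\gamma z}
=\sum_{j=1}^r c_j\sum_{\gamma\in O_j}\frac{\gamma z}{1-\gamma z}\\
&=\sum_{j=1}^r(-c_j)\,\frac{z\,u_j'(z)}{u_j(z)},
\end{align*}
and after renaming $-c_j$ as $c_j$ this is the claimed form.

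I expect the main obstacle to be the Galois-invariance of the coefficients $b_\gamma$, together with the attendant bookkeeping (tracking which monomials coincide as elements of $K$ and handling the harmless $\gamma=0$ term). Once that is in place, the passage from each Galois orbit to the logarithmic derivative $z\,u_j'/u_j$ is purely formal, and the monic hypothesis on $P$ is exactly what makes the $g_j$, and hence the $u_j$, polynomials with integer coefficients (and with constant term $1$).
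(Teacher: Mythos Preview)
Your argument is correct. It differs from the paper's primary route, though it coincides with an alternative the paper mentions in a remark.

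The paper's main proof proceeds in two steps. First (Theorem~\ref{fprational}) it expands the product $\prod_{j<k}(\alpha_j^{-n}-\alpha_k^{-n})^2$ exactly as you do, obtaining $f_P(z)=\sum_j c_j\beta_j^{-1}z/(1-\beta_j^{-1}z)$ and hence rationality. Second (Theorem~\ref{Minton}) it invokes Minton's theorem, a cited black box characterising rational functions whose Taylor coefficients satisfy the Gauss congruences as $\mathbb{Q}$-linear combinations of logarithmic derivatives $zu'/u$ with $u\in\mathbb{Z}[z]$; this applies because the paper has separately established the Gauss congruence for $\Delta(P_n)$ in Section~5. That yields the result with rational $c_j$, and a remark then observes that the $c_j$ are in fact integers because the expansion coefficients are constant on Galois conjugates --- precisely your key step.

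Your proof is therefore the direct argument the paper only sketches in that remark: you bypass both Minton's theorem and the Gauss congruences entirely, proving Galois invariance of the $b_\gamma$ from the symmetry of the discriminant and then grouping orbits into the reciprocal polynomials $u_j(z)=\prod_{\gamma\in O_j}(1-\gamma z)$. This is more self-contained and yields integer coefficients in one stroke; the paper's route has the merit of exhibiting the result as an instance of a general structural theorem, but at the cost of two external ingredients.
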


\begin{thm}
    Let $P \in \mathbb{Z}[x]$ be a monic polynomial of degree $d$ with constant coefficient $a_0 = \pm 1$, and let $m \in \mathbb{Z}$. Then there exist infinitely many $n$ such that $\Delta(P_n)$ is divisible by $m^{d(d-1)}$.
\end{thm}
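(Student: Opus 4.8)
The plan is to exploit that when $a_0 = \pm 1$ all roots $\alpha_1,\dots,\alpha_d$ of $P$ are algebraic units, so that every ratio $\alpha_i/\alpha_j$ lies in the finitely generated ring $R := \mathbb{Z}[\alpha_1,\dots,\alpha_d]$ and is a unit there. Since $R$ is a finitely generated $\mathbb{Z}$-module, the quotient $R/mR$ is a finite ring (we may assume $|m|\ge 2$, $d\ge 2$ and $P$ squarefree, the remaining cases being vacuous or immediate because then $\Delta(P_n)=0$ or $m^{d(d-1)}=1$), and hence its unit group $(R/mR)^\times$ is finite; let $N:=|(R/mR)^\times|$. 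For any $n$ divisible by $N$ and any pair $i\ne j$ we then have $(\alpha_i/\alpha_j)^n\equiv 1\pmod{mR}$, whence $\alpha_i^n-\alpha_j^n=\alpha_j^n\big((\alpha_i/\alpha_j)^n-1\big)\in mR$.

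First I would record three facts about $R$: (i) $R$ is a finitely generated $\mathbb{Z}$-module, since it is generated over $\mathbb{Z}$ by finitely many algebraic integers; (ii) $R\cap\mathbb{Q}=\mathbb{Z}$, because $R$ is integral over $\mathbb{Z}$ and $\mathbb{Z}$ is integrally closed in $\mathbb{Q}$; (iii) each $\alpha_i$ is invertible in $R$, since $a_0=\pm1$ forces $\prod_k\alpha_k=\pm1$ and therefore $\alpha_i^{-1}=\pm\prod_{k\ne i}\alpha_k\in R$. By (iii), each $\beta_{ij}:=\alpha_i\alpha_j^{-1}$ is a unit of $R$, so its class in $R/mR$ is a unit and $\beta_{ij}^{\,N}\equiv 1\pmod{mR}$; crucially the single exponent $N$ works simultaneously for all $\binom d2$ pairs.

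Next, for $n=kN$ with $k\ge 1$, I would write $\Delta(P_n)=\prod_{i<j}(\alpha_i^n-\alpha_j^n)^2$, which up to sign equals the product of the $d(d-1)$ factors $\alpha_i^n-\alpha_j^n$ with $i\ne j$; each of these lies in $mR$ by the previous paragraph, so $\Delta(P_n)\in m^{d(d-1)}R$. Since $\Delta(P_n)$ is an ordinary integer, fact (ii) gives $\Delta(P_n)/m^{d(d-1)}\in R\cap\mathbb{Q}=\mathbb{Z}$, i.e.\ $m^{d(d-1)}\mid \Delta(P_n)$ in $\mathbb{Z}$. As there are infinitely many multiples of $N$, this produces infinitely many admissible $n$, completing the argument.

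The genuinely delicate points are the descent from divisibility in $R$ to divisibility in $\mathbb{Z}$ — which is exactly where integral closedness ($R\cap\mathbb{Q}=\mathbb{Z}$) is used — and the finiteness of $R/mR$, which is what makes the uniform exponent $N$ exist; both are standard but should be stated explicitly. The degenerate cases ($m=0$, $|m|=1$, $d\le1$, or $P$ with a repeated root) are best disposed of at the very start.
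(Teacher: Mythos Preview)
Your argument is correct and follows essentially the same route as the paper (Theorem~\ref{Discdiv}): use that the roots are units in a ring of algebraic integers, take $N$ to be the order of the finite group $(R/mR)^\times$, conclude $\alpha_i^n-\alpha_j^n\in mR$ whenever $N\mid n$, and descend to $\mathbb{Z}$ via $R\cap\mathbb{Q}=\mathbb{Z}$. The only cosmetic differences are that the paper works in the full ring of integers $\mathcal{O}_K$ rather than the order $\mathbb{Z}[\alpha_1,\dots,\alpha_d]$, and obtains $\alpha_j^n\equiv 1\pmod{mR}$ directly instead of passing through the ratios $\beta_{ij}$; neither change affects the substance, and your version is somewhat more explicit about finiteness of $R/mR$ and the descent step.
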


A similar statement holds for the sequence $\{U(n)\}$ which will be defined in section 4.

\begin{thm}
    Let $P \in \mathbb{Z}[x]$ be a monic polynomial with the property that $\Delta(P_n) \neq 0$ for all $n$. Then the quantity
    \begin{displaymath}
        \delta_n(P) := \sum_{m \mid n} \Delta(P_m)
    \end{displaymath}
    has the same sign for all $n$.
\end{thm}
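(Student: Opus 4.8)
The plan is to reduce the statement to a claim about the \emph{sign} of each individual discriminant $\Delta(P_n)$, and then to show that this sign is the same for every $n$.

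First I would invoke the classical description of the sign of the discriminant of a real polynomial: if $f\in\mathbb{R}[x]$ has no repeated root, then $\Delta(f)$ is a nonzero real number of sign $(-1)^{s}$, where $s$ is the number of complex-conjugate pairs among the roots of $f$. (In the product $\prod_{i<j}(\rho_i-\rho_j)^2$ over all roots, a pair of real roots, a real root together with a conjugate pair, and two distinct conjugate pairs each contribute a positive factor, while each single conjugate pair $z,\bar z$ contributes $(z-\bar z)^2<0$.) Since every $P_n$ lies in $\mathbb{Z}[x]\subset\mathbb{R}[x]$ and has $\Delta(P_n)\neq 0$ by hypothesis, this applies to each $P_n$: writing $2s_n$ for the number of non-real roots of $P_n$, we get $\operatorname{sign}\Delta(P_n)=(-1)^{s_n}$.

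The crux is then to prove $s_n=s_1$ for every $n$. The hypothesis $\Delta(P_n)\neq 0$ for all $n$ is exactly the statement that $\alpha_i^{\,n}\neq\alpha_j^{\,n}$ whenever $i\neq j$, i.e.\ that no quotient $\alpha_i/\alpha_j$ with $i\neq j$ (among the nonzero roots) is a root of unity. Now let $\alpha_j$ be a non-real root of $P$. Because $P$ has real coefficients, $\bar\alpha_j$ is also a root and $\bar\alpha_j\neq\alpha_j$, so $\bar\alpha_j=\alpha_k$ for some $k\neq j$; hence $\alpha_j/\bar\alpha_j$ is not a root of unity, and therefore $\alpha_j^{\,n}\neq\bar\alpha_j^{\,n}=\overline{\alpha_j^{\,n}}$ — that is, $\alpha_j^{\,n}$ is non-real — for every $n\geq 1$. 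Conversely a real root stays real under powering. So the index set $\{\,j:\alpha_j^{\,n}\notin\mathbb{R}\,\}$ is independent of $n$, which gives $s_n=s_1=:s$, and hence $\operatorname{sign}\Delta(P_n)=(-1)^s$ for all $n$.

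Finally, $\delta_n(P)=\sum_{m\mid n}\Delta(P_m)$ is a finite sum of nonzero reals all of sign $(-1)^s$, so $\delta_n(P)$ is itself nonzero of sign $(-1)^s$, with no dependence on $n$. The only point requiring a little care is the first step: one must check that the non-real roots of $P_n$ really do group into conjugate pairs — they do, since $\overline{\alpha_j^{\,n}}=\bar\alpha_j^{\,n}$ is again a root of $P_n$, distinct from $\alpha_j^{\,n}$ — so that the formula $\operatorname{sign}\Delta(P_n)=(-1)^{s_n}$ is legitimate; everything after the root-of-unity reformulation of the hypothesis is immediate.
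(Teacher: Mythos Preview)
Your argument is correct for the statement exactly as written, and it is more direct than the paper's route to the same conclusion. You show that each $\Delta(P_n)$ has the fixed sign $(-1)^s$ by tracking the number of complex-conjugate root pairs under powering; the paper instead factors $\Delta(P_n)=\prod_{m\mid n}\Psi_m(P)$ with $\Psi_m(P):=\prod_{j<k}\bigl(\alpha_k^{\phi(m)}\Phi_m(\alpha_j/\alpha_k)\bigr)^2$, observes that $\Psi_m(P)$ is a perfect square for $m>1$ (since $\Phi_m$ is reciprocal, the unsquared product is already symmetric and hence an integer), and concludes that $\Delta(P_n)$ has the sign of $\Psi_1(P)=\Delta(P)$. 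Either way one gets that all $\Delta(P_n)$ share a sign, after which the plain divisor sum trivially inherits it.

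You should be aware, however, that the statement as announced in Section~2 appears to be a slip: later in the paper $\delta_n(P)$ is actually defined as $\sum_{m\mid n}\mu(n/m)\Delta(P_m)$, \emph{with} a M\"obius factor, and it is this alternating sum whose sign is shown to be constant (Theorem~\ref{smalldeltasign}). Your method says nothing about that version, since knowing every $\Delta(P_m)$ is positive gives no control over a signed combination of them. For the M\"obius version the $\Psi_m$-factorisation is used essentially: the paper proves a combinatorial lemma (Lemma~\ref{sequences}) stating that if a positive sequence $\{a_n\}$ satisfies $\sum_{m\mid n}\mu(n/m)a_{mk}\ge 0$ for all $n,k$, then so does the sequence obtained by multiplying every term indexed by a multiple of a fixed $N$ by some constant $C\ge 1$. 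Starting from the constant sequence $a_n\equiv\Delta(P)$ and successively inserting the factors $\Psi_l(P)\ge 1$ builds up to $a_n=\Delta(P_n)$ while preserving the inequality. That inductive mechanism is the real content of the theorem and has no analogue in your argument; so while your proof settles the literal statement handed to you, it does not reach what the paper is actually claiming.
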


\section{Proof of Dobrowolski's theorem}
In this section, we give a proof of Dobrowolski's theorem based on the proof by Cantor and Straus \cite{CantorStraus}. Before we can prove the theorem, we need a couple of lemmas.

\begin{lem} [Confluent Vandermonde determinant] \label{Vandermonde}
    For $x \in \mathbb{C}$, let $A_{n,m}(x)$ be the $n \times m$ matrix
    \begin{displaymath}
        \begin{pmatrix}
            1 & 0 & 0 & \dots & 0 \\
            x & 1 & 0 & \dots & 0 \\
            x^2 & 2x & 1 & \dots & 0 \\
            \dots & \dots & \dots & \dots & \dots \\
            x^{n-1} & (n-1)x^{n-2} & \binom{n-1}{2} x^{n-3} & \dots & \binom{n-1}{m-1} x^{n-m}
        \end{pmatrix},
    \end{displaymath}
    whose entries are given by $\frac{1}{(k-1)!} \frac{d^k}{dx^k} x^{j-1}$. Given some complex numbers $x_1,\dots,x_r$ and integers $m_1,\dots,m_r$ such that $m_1+\dots+m_r=n$, we have
    \begin{displaymath}
        \det(A_{n,m_1}(x_1) A_{n,m_2}(x_2) \dots A_{n,m_r}(x_r)) = \prod_{1 \leq j < k \leq r} (x_k-x_j)^{m_jm_k},
    \end{displaymath}
    where the $n\times n$ matrix is constructed by concatenating the matrices $A_{n,m_j}(x_j)$ for $j = 1,\dots,r$.
\end{lem}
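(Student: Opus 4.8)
The plan is to realise the confluent matrix as a degeneration of an ordinary Vandermonde matrix and to track the determinant through that limit by means of Newton's divided-difference interpolation. Write $v(x) = (1, x, x^{2}, \dots, x^{n-1})^{\top} \in \mathbb{C}^{n}$. The $k$-th column of the block $A_{n,m_i}(x_i)$ has $j$-th entry $\binom{j-1}{k-1}x_i^{\,j-k} = \frac{1}{(k-1)!}\frac{d^{k-1}}{dx^{k-1}}x^{j-1}\big|_{x=x_i}$, so it equals $\frac{1}{(k-1)!}v^{(k-1)}(x_i)$, and hence the $n\times n$ matrix $B := A_{n,m_1}(x_1)\cdots A_{n,m_r}(x_r)$ is the one whose columns are $v(x_i), v'(x_i), \tfrac12 v''(x_i), \dots, \tfrac{1}{(m_i-1)!}v^{(m_i-1)}(x_i)$, listed for $i=1,\dots,r$. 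The goal is to show $\det B = \prod_{1\le j<k\le r}(x_k-x_j)^{m_jm_k}$.

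First I would pick $n$ auxiliary numbers arranged into $r$ clusters $z_{i,1},\dots,z_{i,m_i}$ ($i = 1,\dots,r$), temporarily assumed pairwise distinct, and form the Vandermonde matrix $W$ with columns $v(z_{1,1}),\dots,v(z_{1,m_1}),v(z_{2,1}),\dots,v(z_{r,m_r})$, so that $\det W = \prod (z_{\beta}-z_{\alpha})$ over all column-index pairs $\alpha<\beta$. Within each cluster, Newton's interpolation formula, evaluated at the nodes, reads $v(z_{i,k}) = \sum_{l=1}^{k}\Big(\prod_{a=1}^{l-1}(z_{i,k}-z_{i,a})\Big)D^{(i)}_l$, where $D^{(i)}_l := [z_{i,1},\dots,z_{i,l}]\,v$ is the (vector-valued) divided difference. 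In matrix form this says $W = G\cdot\operatorname{diag}(T_1,\dots,T_r)$, where $G$ is the matrix whose columns are the $D^{(i)}_l$ and $T_i$ is the upper-triangular $m_i\times m_i$ matrix with $(T_i)_{l,k} = \prod_{a=1}^{l-1}(z_{i,k}-z_{i,a})$, so $\det T_i = \prod_{1\le a<b\le m_i}(z_{i,b}-z_{i,a})$. Dividing out these within-cluster factors leaves
\begin{displaymath}
\det G \;=\; \prod_{1\le i<i'\le r}\ \prod_{a=1}^{m_i}\prod_{b=1}^{m_{i'}}(z_{i',b}-z_{i,a}),
\end{displaymath}
an identity between polynomials in the auxiliary variables, hence valid for all values, including coinciding ones.

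Finally I would let $z_{i,a}\to x_i$ for all $i,a$. Componentwise the divided difference of a monomial tends to the corresponding Taylor coefficient, $[z_{i,1},\dots,z_{i,l}]\,x^{j-1}\to\binom{j-1}{l-1}x_i^{\,j-l}$, so $D^{(i)}_l\to\frac{1}{(l-1)!}v^{(l-1)}(x_i)$ and therefore $G\to B$; meanwhile the right-hand side tends to $\prod_{i<i'}(x_{i'}-x_i)^{m_im_{i'}}$. Since the determinant is continuous in the entries, the displayed identity passes to the limit and gives exactly $\det B = \prod_{1\le j<k\le r}(x_k-x_j)^{m_jm_k}$.

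The step requiring real care is the middle one: one must be sure that replacing the columns $v(z_{i,1}),\dots,v(z_{i,m_i})$ by the divided-difference columns $D^{(i)}_1,\dots,D^{(i)}_{m_i}$ absorbs into $\det W$ precisely the within-cluster differences $\prod_{a<b}(z_{i,b}-z_{i,a})$ and nothing else, so that exactly the between-cluster factors survive; the triangularity of $T_i$ together with Newton's formula is what makes this bookkeeping clean. The limiting step is then routine, every quantity in sight being a polynomial in the auxiliary variables. (Alternatively, one can argue directly that $\det B$ is a homogeneous polynomial in $x_1,\dots,x_r$ of total degree $\sum_{j<k}m_jm_k$ — a short count over the permutation expansion shows every surviving term has this degree — and that $(x_k-x_j)^{m_jm_k}$ divides it for every pair $j<k$; matching degrees then forces equality up to a constant, which is pinned to $1$ by inspecting one coefficient. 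On that route the exact order of vanishing along $x_j=x_k$ is the crux, and it is again obtained most cleanly from the same divided-difference degeneration.)
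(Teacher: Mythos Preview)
The paper does not actually prove this lemma; it is stated as a known classical result (the confluent Vandermonde determinant) and then used in the proof of Dobrowolski's theorem, so there is no proof in the paper to compare against.

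Your argument is correct and is one of the standard routes to the confluent Vandermonde formula. The key steps are all in order: Newton's forward formula gives exactly the column relation $v(z_{i,k})=\sum_{l\le k}\big(\prod_{a<l}(z_{i,k}-z_{i,a})\big)D^{(i)}_l$ (the sum truncates at $l=k$ because the product picks up the factor $z_{i,k}-z_{i,k}=0$ once $l>k$), the matrices $T_i$ are indeed unit--upper--triangular up to the diagonal entries $\prod_{a<k}(z_{i,k}-z_{i,a})$, so $\det T_i$ is precisely the within-cluster Vandermonde, and the resulting identity for $\det G$ is polynomial in the auxiliary nodes and hence survives specialisation. The limit $[z_{i,1},\dots,z_{i,l}]\,x^{j-1}\to\binom{j-1}{l-1}x_i^{\,j-l}$ is the standard fact that divided differences degenerate to Taylor coefficients at coalescing nodes, so $G\to B$ columnwise and the claim follows. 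Your alternative sketch (degree count plus divisibility by $(x_k-x_j)^{m_jm_k}$, then fixing the constant) is also a legitimate route; as you note, establishing the exact order of vanishing is the only nontrivial point there, and it again comes down to the same divided-difference limit or an equivalent rank argument.
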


\begin{lem}[Dobrowolski] \label{Dobrdiv}
    Let $P \in \mathbb{Z}[x]$ be a monic irreducible noncyclotomic polynomial of degree $d$ with roots $\alpha_1,\dots,\alpha_d$, and let $p$ be a prime number. Then the quantity
    \begin{displaymath}
        \prod_{j,k=1}^d (\alpha_j^p-\alpha_k) = \Res(P_p,P)
    \end{displaymath}
    is nonzero and divisible by $p^d$.
\end{lem}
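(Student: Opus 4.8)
The plan is to treat the two assertions of Lemma~\ref{Dobrdiv} separately: that $\Res(P_p,P)\neq 0$, for which I would use irreducibility and the noncyclotomic hypothesis, and that $p^d\mid\Res(P_p,P)$, which in fact holds for \emph{any} monic $P\in\mathbb{Z}[x]$. Throughout I will use that $P_p$ is monic with roots $\alpha_1^p,\dots,\alpha_d^p$, so that
\[
\Res(P_p,P)=\prod_{j=1}^d P(\alpha_j^p)=\prod_{j,k=1}^d(\alpha_j^p-\alpha_k).
\]

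For the nonvanishing, I would argue by contradiction: suppose $\alpha_j^p=\alpha_k$ for some $j,k$. Since $P$ is irreducible over $\mathbb{Q}$ it is separable, so the Galois group of its splitting field permutes the root set $\{\alpha_1,\dots,\alpha_d\}$ transitively; choosing an automorphism $\sigma$ sending $\alpha_j$ to an arbitrary root $\alpha_i$ gives $\alpha_i^p=\sigma(\alpha_j)^p=\sigma(\alpha_j^p)=\sigma(\alpha_k)$, again a root of $P$. Hence the $p$-th power map carries roots of $P$ to roots of $P$, and iterating it starting from $\alpha_j$ yields, among finitely many roots, a coincidence $\alpha_j^{p^a}=\alpha_j^{p^b}$ with $a<b$. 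As $\alpha_j\neq 0$ (the case $P=x$ is degenerate and should be excluded, since there $\Res(P_p,P)=0$), this forces $\alpha_j^{\,p^b-p^a}=1$, so $\alpha_j$ is a root of unity and $P$, being its minimal polynomial, is cyclotomic — a contradiction. Therefore $\Res(P_p,P)\neq 0$.

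For the divisibility I would start from the polynomial congruence $P(x)^p\equiv P(x^p)\pmod p$ in $\mathbb{Z}[x]$, which follows from the Frobenius endomorphism of $\mathbb{F}_p[x]$ together with Fermat's little theorem applied coefficientwise. Writing $P(x)^p-P(x^p)=p\,Q(x)$ with $Q\in\mathbb{Z}[x]$ and substituting $x=\alpha_j$ (so $P(\alpha_j)=0$) gives $P(\alpha_j^p)=-p\,Q(\alpha_j)$ for every $j$. Hence
\[
\Res(P_p,P)=\prod_{j=1}^d P(\alpha_j^p)=(-p)^d\prod_{j=1}^d Q(\alpha_j),
\]
and $\prod_{j=1}^d Q(\alpha_j)$ is a symmetric polynomial in $\alpha_1,\dots,\alpha_d$ with integer coefficients, hence a polynomial with integer coefficients in the coefficients of $P$, hence an integer. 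Thus $p^d\mid\Res(P_p,P)$. Equivalently, each $Q(\alpha_j)$ is an algebraic integer and $\Res(P_p,P)/p^d=(-1)^d\prod_j Q(\alpha_j)$ is a rational algebraic integer, so an element of $\mathbb{Z}$.

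The two displayed identities and the polynomial congruence are routine; the only point requiring real care is the nonvanishing, where one must correctly invoke transitivity of the Galois action on the roots and then argue that some iterate of the $p$-th power map fixes a root (equivalently, that the $p$-th power map permutes the roots, so a suitable power of it is the identity permutation) in order to force $\alpha_j$ to be a root of unity. Keeping track of the degenerate polynomial $P=x$ — which satisfies the stated hypotheses under the convention used in this thesis yet makes $\Res(P_p,P)$ vanish — is the one genuine subtlety to flag.
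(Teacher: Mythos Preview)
Your argument is correct. The paper itself states Lemma~\ref{Dobrdiv} without proof, attributing it to Dobrowolski; however, the divisibility assertion is reproved later as the case $k=1$ of Corollary~\ref{Unspecial}, and there the route is genuinely different from yours. You use the congruence $P(x)^p\equiv P(x^p)\pmod p$ and substitute the roots $\alpha_j$ of $P$ to obtain $P(\alpha_j^p)\in p\,\mathcal{O}_K$, hence $\prod_j P(\alpha_j^p)\in p^d\mathbb{Z}$. The paper instead establishes the \emph{coefficientwise} congruence $P_p\equiv P\pmod p$ (as a special case of Lemma~\ref{Gausspol}, via Gauss congruences for the elementary symmetric functions of the $\alpha_j^m$) and then reads off the $p^d$-divisibility from the Sylvester matrix of $P_p$ and $P-P_p$. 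Your argument is the classical one and is more self-contained; the paper's approach is set up so that it immediately generalises to $\Res(P_{p^k},P_{p^{k-1}})$ and, ultimately, to the quantities $U(n)$ for composite~$n$.

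For the nonvanishing, the paper gives no proof here either; the nearby Lemma~\ref{replacepol} handles a related but distinct issue (replacing $P$ by a polynomial of smaller degree with the same Mahler measure when some $\alpha_j^n=\alpha_k^n$). Your Galois-transitivity argument is standard and correct, and your flag about $P=x$ is apt: the lemma as literally stated fails there, and the paper's earlier remark that $M(P)=1$ iff $P$ is cyclotomic \emph{or the monomial $x$} shows this case is being silently excluded.
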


\begin{lem} \label{replacepol}
    Suppose $P \in \mathbb{Z}[x]$ is a monic irreducible polynomial of degree $d$ with the property that there exists an integer $n$ and roots $\alpha, \beta$ of $P$ such that $\alpha^n = \beta^n$. Then there exists a polynomial $\tilde{P}$ of lower degree $\tilde{d}$ with $M(\tilde{P}) = M(P)$.
\end{lem}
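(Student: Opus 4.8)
The plan is to read the hypothesis $\alpha^n=\beta^n$ as saying that $\alpha/\beta$ is a root of unity, to partition the roots of $P$ into classes under the relation ``the ratio is a root of unity'', and to build $\tilde P$ by collapsing each class to a single, carefully chosen root so that the Mahler measure is preserved exactly.

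First I would dispose of the degenerate case $\alpha=\beta$, in which there is nothing to prove. Otherwise $\omega:=\alpha/\beta$ is a root of unity different from $1$. On the set $R=\{\alpha_1,\dots,\alpha_d\}$ of roots of $P$ define $\gamma\sim\gamma'$ iff $\gamma/\gamma'$ is a root of unity. If $L$ denotes the splitting field of $P$, then $\mathrm{Gal}(L/\mathbb Q)$ preserves $\sim$ (Galois automorphisms send roots of unity to roots of unity), and since $P$ is irreducible the Galois action on $R$ is transitive; hence all $\sim$-classes have the same cardinality $c$, with $c\mid d$, and $c\ge 2$ because $\alpha\sim\beta$ with $\alpha\ne\beta$. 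Elements of one class have equal absolute value, so each class $C$ has a well-defined modulus $r_C$, and $M(P)=\prod_C(\max\{1,r_C\})^{c}$, the product ranging over all classes.

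Next I would set $\pi_C:=\prod_{\gamma\in C}\gamma$ for each class and define $\tilde P(x):=\prod_C\bigl(x-\pi_C\bigr)$, a monic polynomial of degree $\tilde d=d/c<d$. To see $\tilde P\in\mathbb Z[x]$: each $\pi_C$ is an algebraic integer (a product of roots of the monic $P\in\mathbb Z[x]$), and $\mathrm{Gal}(L/\mathbb Q)$ permutes the finite set $\{\pi_C\}$ (if $g$ carries the class $C$ onto $C'$ then $g(\pi_C)=\pi_{C'}$), so the coefficients of $\tilde P$ are Galois-invariant algebraic integers, i.e., rational integers. Finally $|\pi_C|=r_C^{\,c}$, and since $\max\{1,t^{c}\}=(\max\{1,t\})^{c}$ for $t\ge 0$,
\[
 M(\tilde P)=\prod_C\max\{1,r_C^{\,c}\}=\prod_C(\max\{1,r_C\})^{c}=M(P),
\]
which finishes the argument.

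The step I expect to carry the real content is choosing $\tilde P$ correctly, rather than the routine verifications. The naive candidate — the minimal polynomial $Q$ of $\alpha^n$ — does have degree $<d$ (the automorphism sending $\alpha$ to $\beta$ fixes $\alpha^n$ but moves $\alpha$, so $\mathbb Q(\alpha^n)\subsetneq\mathbb Q(\alpha)$), but one computes $M(Q)=M(P)^{\,n/e}$ with $e=[\mathbb Q(\alpha):\mathbb Q(\alpha^n)]\le n$, and $e=n$ need not hold, so $Q$ is in general the wrong object. Taking products over whole $\sim$-classes is precisely the fix: within a class all roots share a modulus, so the class-product turns $(\max\{1,r_C\})^{|C|}$ into $\max\{1,r_C^{\,|C|}\}$ — the same number — while cutting the degree by the factor $c$. (Irreducibility of $P$ is used only to force the class sizes to coincide; the identical construction works verbatim for any monic $P\in\mathbb Z[x]$ possessing two roots whose ratio is a root of unity.)
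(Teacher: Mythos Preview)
Your argument is correct. The paper actually states this lemma without proof (it is one of three preparatory lemmas quoted before the proof of Dobrowolski's theorem), so there is no in-paper argument to compare against. Your construction---partition the roots by $\gamma\sim\gamma'\iff\gamma/\gamma'$ is a root of unity, use Galois transitivity to get equal class sizes $c\ge 2$, and set $\tilde P(x)=\prod_C\bigl(x-\prod_{\gamma\in C}\gamma\bigr)$---is a clean, self-contained way to manufacture $\tilde P$; the identity $\max\{1,r_C^{\,c}\}=(\max\{1,r_C\})^{c}$ is exactly what makes the Mahler measures agree. Your closing remark that the naive choice (the minimal polynomial of $\alpha^n$) can fail because $[\mathbb Q(\alpha):\mathbb Q(\alpha^n)]$ need not equal $n$ is also correct and worth keeping.

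One cosmetic point: the sentence ``dispose of the degenerate case $\alpha=\beta$, in which there is nothing to prove'' is not quite right. If $\alpha=\beta$ were permitted, the hypothesis would hold for \emph{every} irreducible $P$ and the conclusion would be false in general; the lemma is implicitly about distinct roots (and is only invoked that way in the paper, to ensure $\alpha_j^n-\alpha_k^n\ne 0$ for $j\ne k$). So write ``we may assume $\alpha\ne\beta$, as otherwise the hypothesis carries no content'' rather than ``nothing to prove''.
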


\begin{proof}[Proof of Dobrowolski's theorem]
    Without loss of generality, we assume that $P$ is irreducible, and by Lemma \ref{replacepol}, we can also assume that $\alpha_j^n -\alpha_k^n$ never vanishes. Let $m = \log d$, $k = \lfloor \frac{m}{\log m} \rfloor$ and $s = \lfloor \frac{1}{2} (\frac{m}{\log m})^2\rfloor$, and let $\Delta$ be the confluent Vandermonde determinant constructed from the data
    \begin{displaymath}
        (x_1,\dots,x_r) = (\alpha_1,\dots,\alpha_d,\alpha_1^{p_1},\dots,\alpha_1^{p_1},\dots,\alpha_1^{p_s},\dots,\alpha_d^{p_s}),
    \end{displaymath}
    where $p_j$ denotes the $j$-th prime number, and
    \begin{displaymath}
        (m_1,\dots,m_r) = (\underbrace{k, \dots, k}_d, \underbrace{1, \dots, 1}_{sd}),
    \end{displaymath}
    so that $n = d(k+s)$. Then by Lemma \ref{Vandermonde}, $\Delta$ is divisible by the product
    \begin{displaymath}
        \prod_{j=1}^s \Res(P_{p_j})^k
    \end{displaymath}
    and hence by
    \begin{displaymath}
        \prod_{j=1}^s p_j^{kd}.
    \end{displaymath}
    Moreover, our assumptions on $P$ imply that $\Delta \neq 0$. Hence,
    \begin{displaymath}
        \Delta \geq \prod_{j=1}^s p_j^{kd}.
    \end{displaymath}
    By Hadamard's inequality, we also have
    \begin{displaymath}
        \Delta^2 \leq n^{d(k^2+s)} M(P)^{2(k+\sum_{j=1}^s p_j)n},
    \end{displaymath}
    so
    \begin{displaymath}
        n^{d(k^2+s)} M(P)^{2(k+\sum_{j=1}^s p_j)n} \geq \prod_{j=1}^s p_j^{2dk}.
    \end{displaymath}
    Taking logarithms, we get
    \begin{align*}
        m(P) &\geq \frac{2dk\sum_{j=1}^s \log p_j - d(k^2+s)\log n}{2(k+\sum_{j=1}^s p_j)n} \\
        &= \frac{2k\sum_{j=1}^s \log p_j - (k^2+s)\log n}{2(k+\sum_{j=1}^s p_j)(k+s)}.
    \end{align*}
    By the asymptotic formulae for prime numbers
    \begin{displaymath}
        \sum_{j=1}^s p_j = \frac{1}{2}s^2\log s(1+o(1)) = \frac{m^4}{4\log^3 m}(1+o(1))
    \end{displaymath}
    and
    \begin{displaymath}
        \sum_{j=1}^s \log p_j = s\log s(1+o(1)) = \frac{m^2}{\log m}(1+o(1)),
    \end{displaymath}
    we now have
    \begin{align*}
        m(P) &\geq \frac{\frac{2m^3}{\log^2m}(1+o(1)) - \frac{3}{2}\frac{m^3}{\log^2 m}(1+o(1))}{\frac{m^4}{2\log^3m}\frac{m^2}{2\log^2 m}(1+o(1))} \\
        &= \frac{2\log^3m}{m^3}(1+o(1)) = 2\Bigg(\frac{\log \log d}{\log d}\Bigg)^3(1+o(1)). \qedhere
    \end{align*}
\end{proof}

The lower bound for $\Delta$ we used in this proof is not very sharp, because we ignored many factors. Since
\begin{displaymath}
    \Delta^2 = \Delta(P)^k \prod_{j=1}^s \Res(P_{p_j},P)^{2k} \prod_{j=1}^s \Delta(P_{p_j}) \prod_{1 \leq j < l \leq s} \Res(P_{p_j},P_{p_l})^2,
\end{displaymath}
one might hope to obtain better bounds for $m(P)$ by finding nontrivial bounds for the discriminants and resultants occurring in this product.

\section{A generalisation of Dobrowolski's divisibility}
In what follows, $P(x) = \prod_{j=1}^d (x-\alpha_j) = \sum_{k=0}^d a_kx^k \in \mathbb{Z}[x]$ will be a monic irreducible polynomial of degree $d$. For each $n \in \mathbb{Z}_{>0}$, define $P_n(x) = \prod_{j=1}^d (x-\alpha_j^n) \in \mathbb{Z}[x]$. Dobrowolski's Lemma \ref{Dobrdiv} says that when $p$ is prime, the quantity
\begin{displaymath}
    U(p) = U(P;p) := \prod_{j_1=1}^d \prod_{j_2=1}^d (\alpha_{j_1}^p-\alpha_{j_2}) = \operatorname{Res}(P_p,P)
\end{displaymath}
is divisible by $p^d$. This property is both a consequence and a generalisation of Fermat's congruence $a^p - a \equiv 0 \mod p$, which holds for any $a \in \mathbb{Z}$. It is easy to see that Dobrowolski's divisibility fails when we replace $p$ by a composite number $n$. However, it is possible to find a generalisation which does hold for composite numbers as well. The starting point is Gauss's congruence
\begin{displaymath}
    \sum_{m\mid n} a^m\mu\Big(\frac nm\Big) \equiv 0 \mod n,
\end{displaymath}
valid for any $n \in \mathbb{Z}_{>0}$ and $a \in \mathbb{Z}$, which specialises to Fermat's congruence when $n = p$. If we set
\begin{displaymath}
    \mathcal{D}_+(n) := \Big\{m\mid n: \mu\Big(\frac nm\Big) = 1\Big\} \text{ and } \mathcal{D}_-(n) := \Big\{m \mid n: \mu\Big(\frac nm\Big) = -1\Big\},
\end{displaymath}
then Gauss's congruence takes the form
\begin{displaymath}
    \sum_{m \in \mathcal{D}_+} a^m - \sum_{m \in \mathcal{D}_-} a^m \equiv 0 \mod n,
\end{displaymath}
which suggests the following generalisation of Dobrowolski's divisibility. For a multiset $\mathcal{N} = \{n_1,\dots,n_r\} \subseteq \mathbb{Z}_{>0}$, where entries may occur multiple times, define the polynomial
\begin{displaymath}
    P_\mathcal{N}(x) := \prod_{j_1,\dots,j_r}^d(x-\alpha_{j_1}^{n_1}\dots\alpha_{j_r}^{n_r}) \in \mathbb{Z}[x],
\end{displaymath}
which has degree $d^r$. If we assume $n > 1$ and let $n = p_1^{k_1}\dots p_l^{k_l}$ be its prime factorisation, then the sets $\mathcal{D}_+(n)$ and $\mathcal{D}_-(n)$ both have $r = 2^{l-1}$ elements. Define
\begin{displaymath}
    U(n) = U(P; n) := \operatorname{Res}(P_{\mathcal{D}_+(n)},P_{\mathcal{D}_-(n)}).
\end{displaymath}
Then for prime numbers $p$, this definition coincides with $U(p)$ as defined earlier. Moreover, the following holds:
\begin{thm} \label{Un}
    Let $P \in \mathbb{Z}[x]$ be a monic polynomial of degree $d$ with roots $\alpha_1,\dots,\alpha_d$, and let $n >1$ be an integer with $l$ distinct prime factors. Then $U(n)$ is divisible by $n^{d^r}$, where $r=2^{l-1}$.
\end{thm}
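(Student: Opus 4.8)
The plan is to establish the sharper statement that $P_{\mathcal D_+(n)}\equiv P_{\mathcal D_-(n)}\pmod n$ as elements of $\mathbb Z[x]$, and then deduce the divisibility of the resultant formally. Note first that $P_{\mathcal D_+(n)}$ and $P_{\mathcal D_-(n)}$ are both monic of degree $N:=d^{\,r}$, since $|\mathcal D_\pm(n)|=r$. If $F,G\in\mathbb Z[x]$ are monic of degree $N$ with $F\equiv G\pmod n$, write $G=F+nR$ with $R\in\mathbb Z[x]$; then, as $F$ is monic,
\[
\Res(F,G)=\prod_{F(\phi)=0}G(\phi)=\prod_{F(\phi)=0}\bigl(F(\phi)+nR(\phi)\bigr)=n^{N}\prod_{F(\phi)=0}R(\phi)=n^{N}\Res(F,R),
\]
so $n^{N}\mid\Res(F,G)$, and applied to $F=P_{\mathcal D_+(n)},\ G=P_{\mathcal D_-(n)}$ this gives $n^{d^{\,r}}\mid U(n)$. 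By the Chinese Remainder Theorem it therefore suffices to prove $P_{\mathcal D_+(n)}\equiv P_{\mathcal D_-(n)}\pmod{p^{k}}$ whenever $p^{k}\,\|\,n$ (i.e.\ $p^{k}\mid n$ but $p^{k+1}\nmid n$).

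The core of the argument is a congruence for the operation $Q\mapsto Q_p$ on monic integer polynomials. The base case is $Q_p\equiv Q\pmod p$: reducing modulo $p$ and working in $\overline{\mathbb F_p}[x]$, the coefficientwise Frobenius fixes $\overline Q$ and sends $\prod_i(x-\overline{\zeta_i})$ to $\prod_i(x-\overline{\zeta_i}^{p})=\overline{Q_p}$. The step I expect to be the main obstacle is the quantitative refinement: \emph{if $A,B\in\mathbb Z[x]$ are monic of the same degree and $A\equiv B\pmod{p^{m}}$ for some $m\ge1$, then $A_p\equiv B_p\pmod{p^{m+1}}$.} I would prove this with the universal polynomials $\Phi_1,\dots,\Phi_d\in\mathbb Z[y_1,\dots,y_d]$ expressing the coefficients of $C_p$ through those of the generic monic $C$ (fundamental theorem of symmetric functions). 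Since $x^{p}-\gamma^{p}=(x-\gamma)^{p}$ in characteristic $p$, one has $\overline{C_p}(x^{p})=\overline C(x)^{p}$, and expanding the right side shows the coefficients of $\overline{C_p}$ are the $p$-th powers of those of $\overline C$, i.e.\ $\Phi_i\equiv y_i^{\,p}\pmod p$. Writing $\Phi_i=y_i^{\,p}+pG_i$ with $G_i\in\mathbb Z[\mathbf y]$ and bounding the $p$-adic valuation $v_p\bigl(\Phi_i(\mathbf y)-\Phi_i(\mathbf y')\bigr)$ for integer vectors $\mathbf y\equiv\mathbf y'\pmod{p^{m}}$ — using $v_p(y_i-y_i')\ge m$ together with the fact that $y_i^{p-1}+\cdots+(y_i')^{p-1}$ is a sum of $p$ quantities congruent mod $p$, hence $\equiv0\pmod p$ — produces the extra factor of $p$. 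Iterating this with $A=Q_{p^{j-1}}$, $B=Q_{p^{j}}$ (and $(Q_{p^{a}})_p=Q_{p^{a+1}}$) gives $Q_{p^{j}}\equiv Q_{p^{j+1}}\pmod{p^{j+1}}$ for all $j\ge0$; taking $Q=P_{g'}$ for $g=p^{\,j}g'$ with $p\nmid g'$ and $j=v_p(g)$, this becomes $P_g\equiv P_{pg}\pmod{p^{\,v_p(g)+1}}$.

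I would then lift this to multisets: for any multiset $\mathcal M$ of positive integers and positive integer $g$, one has $P_{\mathcal M\cup\{g\}}\equiv P_{\mathcal M\cup\{pg\}}\pmod{p^{\,v_p(g)+1}}$. This follows by factoring $P_{\mathcal M\cup\{h\}}(x)=\prod_{\mathbf j}P^{(\tilde\alpha_{\mathbf j})}_{h}(x)$ over $\mathbf j\in\{1,\dots,d\}^{|\mathcal M|}$, where $\tilde\alpha_{\mathbf j}$ is the monomial in the roots indexed by $\mathbf j$ and $P^{(t)}_{h}(x)=\sum_{i}(-1)^{i}e_i(\alpha_1^{h},\dots,\alpha_d^{h})\,t^{i}x^{d-i}\in\mathbb Z[t][x]$ carries, up to sign, the coefficients of $P_h$ as its own coefficients; the previous step makes $P^{(t)}_{g}\equiv P^{(t)}_{pg}\pmod{p^{\,v_p(g)+1}}$ coefficientwise in $\mathbb Z[t]$, and specialising $t=\tilde\alpha_{\mathbf j}$ and multiplying over $\mathbf j$ preserves the congruence (the resulting difference is a rational polynomial with algebraic‑integer coefficients, hence lies in $\mathbb Z[x]$). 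Finally, a combinatorial pairing closes the proof: writing $\mathcal D_\pm(n)=\{\,n/\!\prod_{i\in T}p_i:\ T\subseteq\{1,\dots,l\},\ |T|\text{ even/odd}\,\}$, the parity‑reversing involution $T\mapsto T\triangle\{1\}$ induces a bijection $\mathcal D_+(n)\to\mathcal D_-(n)$, $e\mapsto\sigma(e)$, with $\{e,\sigma(e)\}=\{g,p_1g\}$ and $v_{p_1}(g)=k_1-1$ (where $p_1^{\,k_1}\,\|\,n$). Ordering $\mathcal D_+(n)=\{e_1,\dots,e_r\}$ and interpolating through the multisets $\mathcal H_t=\{\sigma(e_1),\dots,\sigma(e_t),e_{t+1},\dots,e_r\}$ from $\mathcal H_0=\mathcal D_+(n)$ to $\mathcal H_r=\mathcal D_-(n)$, each step $\mathcal H_{t-1}\to\mathcal H_t$ alters $P_{(\cdot)}$ only by a multiple of $p_1^{\,k_1}$ by the multiset congruence, so $P_{\mathcal D_+(n)}\equiv P_{\mathcal D_-(n)}\pmod{p_1^{\,k_1}}$. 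Running this over every prime factor of $n$ and combining via the Chinese Remainder Theorem with the resultant computation of the first paragraph completes the proof.
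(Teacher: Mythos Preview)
Your proof is correct and takes a genuinely different route from the paper's. The paper first proves the prime-power case $P_{p^k}\equiv P_{p^{k-1}}\pmod{p^k}$ via Smyth's Gauss congruence for power sums, and then for composite $n$ reduces $U(n)$ prime by prime to a resultant $\Res(Q_{p^k},Q_{p^{k-1}})$ of an auxiliary polynomial $Q$ built from \emph{ratios} of the roots; since $Q$ only lies in $\mathbb{Z}_{(p)}[x]$ when $p\nmid a_0$, the paper must treat primes $p\mid a_0$ by a separate, rather ad hoc argument showing that $U(n)$ is divisible by a large power of $a_0$. You instead establish the stronger polynomial congruence $P_{\mathcal D_+(n)}\equiv P_{\mathcal D_-(n)}\pmod n$ directly and uniformly, with no case distinction on $a_0$: the Frobenius-lifting lemma ``$A\equiv B\pmod{p^m}\Rightarrow A_p\equiv B_p\pmod{p^{m+1}}$'' via the universal identities $\Phi_i\equiv y_i^{\,p}\pmod p$ is a clean engine that replaces both the Smyth congruence and the localisation, and the parity-involution/interpolation through the multisets $\mathcal H_t$ lets you stay over $\mathbb{Z}$ throughout. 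What the paper's route buys is the explicit factorisation $U(n)=N\cdot\Res(Q_{p^k},Q_{p^{k-1}})$ and the independent divisibility-by-$a_0$ result, both of some intrinsic interest; what yours buys is a shorter, self-contained argument and the stronger intermediate conclusion that the two polynomials themselves are congruent modulo $n$, not merely that their resultant is divisible by $n^{d^r}$.
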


In order to prove this theorem, we need a few lemmas.

\begin{lem} \label{Gausspol}
    For any $n \in \mathbb{Z}_{>1}$, the following congruence holds:
    \begin{displaymath}
        \sum_{m \mid n} \mu\Big(\frac nm\Big) P_m \equiv 0 \mod n.
    \end{displaymath}
\end{lem}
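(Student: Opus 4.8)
The plan is to prove the congruence one coefficient at a time. Writing $P_m(x) = \sum_{k=0}^{d} (-1)^k e_k(\alpha_1^m,\dots,\alpha_d^m)\,x^{\,d-k}$, where $e_k$ denotes the $k$-th elementary symmetric polynomial, it suffices (since the coefficient of $x^d$ is $1$ and $\sum_{m\mid n}\mu(n/m)=0$ for $n>1$) to show that for each fixed $k$ the integer sequence $c_k(m) := e_k(\alpha_1^m,\dots,\alpha_d^m)$ obeys the Gauss congruence $\sum_{m\mid n}\mu(n/m)\,c_k(m)\equiv 0\pmod n$.

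The key observation is that each $c_k$ is itself a sequence of power sums. For a $k$-element subset $S\subseteq\{1,\dots,d\}$ set $\gamma_S := \prod_{j\in S}\alpha_j$; then $c_k(m) = \sum_{|S|=k}\gamma_S^{\,m}$. Permuting $\alpha_1,\dots,\alpha_d$ merely permutes the $\gamma_S$ among themselves, so the coefficients of $\prod_{|S|=k}(x-\gamma_S)$ are symmetric polynomials with integer coefficients in $\alpha_1,\dots,\alpha_d$; by the fundamental theorem of symmetric polynomials and $P\in\mathbb{Z}[x]$ these are integers. Hence the $\gamma_S$ are the roots of a monic polynomial $Q_k\in\mathbb{Z}[x]$, and $c_k(m)$ is exactly the $m$-th power sum of the roots of $Q_k$. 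So everything reduces to the classical fact that the power sums of the roots of a monic integer polynomial satisfy the Gauss congruences.

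For that fact I would argue with generating functions. If $Q\in\mathbb{Z}[x]$ is monic with roots $\beta_1,\dots,\beta_D$ and companion matrix $C$, and $p_m=\sum_i\beta_i^m$, then $\det(I-tC)=\prod_i(1-\beta_i t)\in 1+t\mathbb{Z}[t]$, while $\prod_i(1-\beta_i t)^{-1}=\exp\!\big(\sum_{m\ge 1}p_m t^m/m\big)$. By the necklace identity $\exp\!\big(\sum_{m\ge1}p_m t^m/m\big)=\prod_{n\ge 1}(1-t^n)^{-L_n}$ with $L_n=\tfrac1n\sum_{m\mid n}\mu(n/m)p_m$, so $\det(I-tC)=\prod_{n\ge1}(1-t^n)^{L_n}$. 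It remains to use the elementary fact that every element of $1+t\mathbb{Z}[[t]]$ has a unique factorisation $\prod_{n\ge1}(1-t^n)^{e_n}$ with $e_n\in\mathbb{Z}$ — obtained by repeatedly cancelling the lowest non-constant term, which only ever requires raising $1-t^n$ to an integer power — whence $L_n\in\mathbb{Z}$, i.e. $\sum_{m\mid n}\mu(n/m)p_m\equiv 0\pmod n$. Applying this to $Q=Q_k$ for $k=0,\dots,d$ and reassembling the coefficients yields the lemma.

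The only real content is this generating-function sublemma; the reduction to it is routine symmetric-function bookkeeping. Should one prefer to avoid formal infinite products, an alternative is to reduce the Gauss congruences to prime powers $n=p^e$ and then prove $\operatorname{tr}(C^{p^e})\equiv\operatorname{tr}(C^{p^{e-1}})\pmod{p^e}$ directly, by expanding $\operatorname{tr}(C^{p^e})$ as a sum of cyclic products $C_{i_1i_2}C_{i_2i_3}\cdots C_{i_{p^e}i_1}$ and grouping the index tuples into orbits under rotation by $\mathbb{Z}/p^e\mathbb{Z}$, tracking the order of each stabiliser; I would keep this combinatorial version in reserve as a fallback presentation.
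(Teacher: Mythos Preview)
Your argument is correct and follows the same skeleton as the paper: reduce to individual coefficients, recognise that $e_k(\alpha_1^m,\dots,\alpha_d^m)=\sum_{|S|=k}\gamma_S^{\,m}$ is the $m$-th power sum of the algebraic integers $\gamma_S=\prod_{j\in S}\alpha_j$, and then appeal to the Gauss congruence for power sums of the roots of a monic integer polynomial. The only difference is that where the paper cites Smyth for this last step, you supply a self-contained proof via the necklace identity and the unique factorisation $\prod_{n\ge1}(1-t^n)^{e_n}$ in $1+t\mathbb{Z}[[t]]$; this buys independence from the reference at the cost of a short extra sublemma, but the overall route is the same.
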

\begin{proof}
    We check this for each coefficient separately. In other words, we need to show that
    \begin{equation} \label{polcong}
        \sum_{m \mid n} \mu\Big(\frac nm\Big) e_k(\alpha_1^m,\dots,\alpha_d^m) \equiv 0 \mod n
    \end{equation}
    holds for $k = 0,\dots, d$, where $e_k(x_1,\dots,x_d)$ denotes the elementary symmetric polynomial of degree $k$ in $d$ variables, i.e.
    \begin{displaymath}
        e_k(x_1,\dots,x_d) = \sum_{1\leq j_1<\dots<j_k\leq d} x_{j_1}\dots x_{j_k}.
    \end{displaymath} For $k=0$, congruence \eqref{polcong} reduces to $\sum_{m\mid n} \mu(\frac nm) = 0$, while for $k=1$, the congruence was shown by Smyth in \cite{Smyth}. The case for general $k$ follows from the $k=1$ case by replacing $\{\alpha_1,\dots,\alpha_d\}$ with $\{\alpha_{j_1}\dots\alpha_{j_k}:1\leq j_1<\dots<j_k\leq d\}$.
\end{proof}

\begin{cor} \label{Unspecial}
    Theorem \ref{Un} holds when $n = p^k$ is a prime power. In other words, the quantity
    \begin{displaymath}
        U(p^k) = \prod_{j_1,j_2=1}^d \big(\alpha_{j_1}^{p^k}-\alpha_{j_2}^{p^{k-1}}\big) = \operatorname{Res}(P_{p^k},P_{p^{k-1}})
    \end{displaymath}
    is divisible by $p^{kd}$.
\end{cor}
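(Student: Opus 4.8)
The plan is to reduce everything to the prime-power instance of Lemma \ref{Gausspol}. First I would unravel the definition of $U(p^k)$. For $n = p^k$ the only divisors $m$ of $n$ with $\mu(p^k/m) \neq 0$ are $m = p^k$ (with $\mu(1) = 1$) and $m = p^{k-1}$ (with $\mu(p) = -1$), so $\mathcal{D}_+(p^k) = \{p^k\}$ and $\mathcal{D}_-(p^k) = \{p^{k-1}\}$ are singletons. Hence $P_{\mathcal{D}_+(p^k)} = P_{p^k}$ and $P_{\mathcal{D}_-(p^k)} = P_{p^{k-1}}$, and since both are monic of degree $d$,
\begin{displaymath}
    U(p^k) = \Res(P_{p^k}, P_{p^{k-1}}) = \prod_{j=1}^d P_{p^k}\big(\alpha_j^{p^{k-1}}\big),
\end{displaymath}
because the roots of $P_{p^{k-1}}$ are exactly $\alpha_1^{p^{k-1}}, \dots, \alpha_d^{p^{k-1}}$. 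Up to an overall sign this is the product displayed in the statement.

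Next I would invoke Lemma \ref{Gausspol} with $n = p^k$. The alternating sum over divisors collapses to $P_{p^k} - P_{p^{k-1}}$, so there is a polynomial $Q \in \mathbb{Z}[x]$ with $P_{p^k}(x) - P_{p^{k-1}}(x) = p^k Q(x)$. Evaluating at $x = \alpha_j^{p^{k-1}}$ and using $P_{p^{k-1}}(\alpha_j^{p^{k-1}}) = 0$ gives $P_{p^k}(\alpha_j^{p^{k-1}}) = p^k Q(\alpha_j^{p^{k-1}})$ for each $j = 1, \dots, d$. Multiplying these $d$ identities yields
\begin{displaymath}
    U(p^k) = p^{kd} \prod_{j=1}^d Q\big(\alpha_j^{p^{k-1}}\big).
\end{displaymath}

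It then remains to check that the remaining product is a rational integer. On one hand it is an algebraic integer, being built from the algebraic integers $\alpha_j$ by operations with integer coefficients; on the other hand it is invariant under permutations of $\alpha_1, \dots, \alpha_d$ and has integer coefficients, so by the fundamental theorem of symmetric polynomials it is a $\mathbb{Z}$-polynomial in the coefficients of $P$, hence rational. A rational algebraic integer is an ordinary integer, and the divisibility $p^{kd} \mid U(p^k)$ follows. The only step needing a little attention is this last integrality check; everything else is bookkeeping, and this corollary is simply the base case $l = 1$ of Theorem \ref{Un}, where the real difficulties lie.
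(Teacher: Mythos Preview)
Your proof is correct and rests on the same key input as the paper's, namely the prime-power case of Lemma~\ref{Gausspol} giving $P_{p^k} \equiv P_{p^{k-1}} \bmod p^k$. The only difference lies in how the divisibility of the resultant is extracted from this congruence. The paper stays entirely over $\mathbb{Z}$: it uses $\Res(P_{p^k},P_{p^{k-1}}) = \Res(P_{p^k},P_{p^{k-1}}-P_{p^k})$ and observes that in the Sylvester matrix of the right-hand pair the last $d$ columns (those coming from the degree-$(d-1)$ polynomial $P_{p^{k-1}}-P_{p^k}$) are divisible by $p^k$, so the determinant picks up $p^{kd}$. You instead pass to the roots via the product formula $\Res(P_{p^k},P_{p^{k-1}}) = \prod_j P_{p^k}(\alpha_j^{p^{k-1}})$ and then need the symmetric-function argument to land back in $\mathbb{Z}$. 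Both routes are standard; the Sylvester-matrix version is a touch cleaner because it avoids the integrality check, while yours makes the factor $p^{kd}$ appear more explicitly. Note incidentally that your symmetric-polynomial argument already shows $\prod_j Q(\alpha_j^{p^{k-1}}) \in \mathbb{Z}$ directly, so the separate ``algebraic integer plus rational'' detour is unnecessary.
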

\begin{proof}
    By Lemma \ref{Gausspol}, we have that $P_{p^k} \equiv P_{p^{k-1}} \bmod p^k$. This implies that the last $d$ columns of the Sylvester matrix of the polynomials $P_{p^k}$ and $P_{p^{k-1}}-P_{p^k}$ are divisible by $p^k$. Hence, $\Res(P_{p^k},P_{p^{k-1}}) = \Res(P_{p^k},P_{p^{k-1}}-P_{p^k})$ is divisible by $p^{kd}$.
\end{proof}

Let $\mathbb{Z}_{(p)}$ be the localisation of $\mathbb{Z}$ at the prime ideal $(p)$. Then $\mathbb{Z}_{(p)} = \mathbb{Z}_p \cap \mathbb{Q}$, where $\mathbb{Z}_p$ denotes the ring of $p$-adic integers. For polynomials $P \in \mathbb{Z}_{(p)}[x]$, we can prove statements similar to those of Lemma \ref{Gausspol} and Corollary \ref{Unspecial}. More precisely, we have:
\begin{lem} \label{Unspecialzp}
    Let $p$ be a prime and $k \in \mathbb{N}$. Then for any monic polynomial $P \in \mathbb{Z}_{(p)}[x]$, we have
    \begin{displaymath}
        P_{p^k}-P_{p^{k-1}} \in p^k\mathbb{Z}_{(p)}
    \end{displaymath}
    and
    \begin{displaymath}
        \Res(P_{p^k},P_{p^{k-1}}) \in p^{kd}\mathbb{Z}_{(p)}.
    \end{displaymath}
\end{lem}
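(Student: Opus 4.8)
The plan is to reduce the statement over $\mathbb{Z}_{(p)}$ to the statement over $\mathbb{Z}$ that has already been established in Lemma \ref{Gausspol} and Corollary \ref{Unspecial}, using a density/specialization argument. The key point is that the two assertions to be proved are polynomial identities (up to divisibility) in the coefficients of $P$: writing $P(x) = x^d + a_{d-1}x^{d-1} + \dots + a_0$, both $P_{p^k} - P_{p^{k-1}}$ (coefficientwise) and $\Res(P_{p^k}, P_{p^{k-1}})$ are fixed polynomials with integer coefficients in the indeterminates $a_0,\dots,a_{d-1}$ — call them $F_i(a_0,\dots,a_{d-1})$ for the coefficients of the first expression and $G(a_0,\dots,a_{d-1})$ for the resultant. (That these are genuinely polynomial in the $a_i$ with integer coefficients follows because the coefficients of $P_n$ are integer polynomials in the power sums of the $\alpha_j$, hence in the $a_i$, and the resultant is likewise an integer polynomial in the coefficients of its arguments.) Corollary \ref{Unspecial} and Lemma \ref{Gausspol} say precisely that $p^k \mid F_i(\mathbf{a})$ for all $i$ and $p^{kd} \mid G(\mathbf{a})$ whenever $\mathbf{a} \in \mathbb{Z}^d$.

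The first step is therefore purely arithmetic: if $F \in \mathbb{Z}[a_0,\dots,a_{d-1}]$ has the property that $N \mid F(\mathbf{a})$ for every $\mathbf{a} \in \mathbb{Z}^d$, then $N \mid F(\mathbf{a})$ for every $\mathbf{a} \in \mathbb{Z}_{(p)}^d$ (in fact for every $\mathbf{a}$ in any commutative ring, once one works modulo $N$). This is the standard fact that a polynomial over $\mathbb{Z}/N\mathbb{Z}$ vanishing on all of $(\mathbb{Z}/N\mathbb{Z})^d$ reduces to $0$ as a function; more to the point here, one can argue directly $p$-adically: given $\mathbf{a} \in \mathbb{Z}_{(p)}^d \subseteq \mathbb{Z}_p^d$, choose $\mathbf{b} \in \mathbb{Z}^d$ with $\mathbf{b} \equiv \mathbf{a} \pmod{p^k}$ (possible since $\mathbb{Z}$ is dense in $\mathbb{Z}_p$), so that $F(\mathbf{a}) \equiv F(\mathbf{b}) \equiv 0 \pmod{p^k}$ in $\mathbb{Z}_p$, and since $F(\mathbf{a}) \in \mathbb{Z}_{(p)} = \mathbb{Z}_p \cap \mathbb{Q}$, this gives $F(\mathbf{a}) \in p^k\mathbb{Z}_{(p)}$. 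Apply this with $N = p^k$ to each $F_i$ to get the first claim, and with $N = p^{kd}$ to $G$ to get the second.

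The one thing that needs a sentence of care — and the place I'd expect the only real subtlety — is the claim that the coefficients of $P_{p^k} - P_{p^{k-1}}$ and the resultant $\Res(P_{p^k},P_{p^{k-1}})$ are honestly elements of $\mathbb{Z}[a_0,\dots,a_{d-1}]$, i.e. that passing through the (generally irrational or complex) roots $\alpha_j$ does not introduce denominators or algebraic irrationalities. This is the symmetric-function argument: $e_k(\alpha_1^m,\dots,\alpha_d^m)$ is a symmetric polynomial with integer coefficients in $\alpha_1,\dots,\alpha_d$, hence by the fundamental theorem of symmetric polynomials an integer polynomial in $e_1(\mathbf{\alpha}),\dots,e_d(\mathbf{\alpha}) = \pm a_{d-1},\dots,\pm a_0$; thus $P_{p^k},P_{p^{k-1}} \in \mathbb{Z}[a_0,\dots,a_{d-1}][x]$ as polynomials in $x$ with coefficients polynomial in the $a_i$, and the resultant of two such polynomials (a determinant of a Sylvester matrix) is again in $\mathbb{Z}[a_0,\dots,a_{d-1}]$. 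With that in hand the reduction above is immediate, and an alternative to invoking the symmetric-function formalism is simply to repeat the proof of Corollary \ref{Unspecial} verbatim over $\mathbb{Z}_{(p)}$: Lemma \ref{Gausspol}'s proof only used that $e_k$ is a symmetric integer polynomial and that $\sum_{m\mid n}\mu(n/m)a^m \equiv 0 \bmod n$, the latter being valid for $a \in \mathbb{Z}_{(p)}$ as well (it depends only on the residue of $a$ mod $n$, and every element of $\mathbb{Z}_{(p)}$ is congruent mod $p^k$ to an ordinary integer), and then the Sylvester-matrix column argument goes through unchanged with entries in $\mathbb{Z}_{(p)}$.
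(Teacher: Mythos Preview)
Your proposal is correct but takes a genuinely different route from the paper. The paper clears denominators explicitly: it scales $P$ to $\bar P(x) = u^d P(x/u)$ with $u \in \mathbb{Z}$ coprime to $p$ so that $\bar P \in \mathbb{Z}[x]$, applies Lemma~\ref{Gausspol} to $\bar P$, and then unwinds the scaling via the relation $\bar P_n(x) = u^{dn} P_n(x/u^n)$ together with $u^{p^k} \equiv u^{p^{k-1}} \pmod{p^k}$ to recover the congruence for the coefficients of $P_{p^k}-P_{p^{k-1}}$ themselves. Your approach bypasses this explicit computation by recognising that the relevant quantities are universal integer polynomials in $a_0,\dots,a_{d-1}$ and then using the $p$-adic density of $\mathbb{Z}$ in $\mathbb{Z}_{(p)}$ (equivalently, $\mathbb{Z}_{(p)}/p^k\mathbb{Z}_{(p)}\cong\mathbb{Z}/p^k\mathbb{Z}$). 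This is more conceptual and immediately transfers \emph{any} congruence holding universally for integer coefficients to $\mathbb{Z}_{(p)}$-coefficients; the paper's method, by contrast, is concrete and self-contained, avoiding the need to invoke the fundamental theorem of symmetric polynomials explicitly. One small caveat: your closing ``alternative'' (rerunning Lemma~\ref{Gausspol} verbatim over $\mathbb{Z}_{(p)}$) is stated a little loosely, since that lemma's proof leans on Smyth's power-sum result rather than directly on the scalar congruence $\sum_{m\mid n}\mu(n/m)a^m\equiv 0$; but your main density argument does not rely on this alternative, so the proof stands.
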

\begin{proof}
    Consider the polynomial $\bar{P}$ given by
    \begin{displaymath}
        \bar{P}(x) = u^dP\Big(\frac xu\Big),
    \end{displaymath}
    where $u$ is an integer chosen in such a way that $\bar{P} \in \mathbb{Z}[x]$. Note that since $P \in \mathbb{Z}_{(p)}[x]$, we can choose $u$ such that $p \nmid u$. From Lemma \ref{Gausspol} it now follows that
    \begin{equation} \label{weirdpolcongruence}
        \bar{P}_{p^k} - \bar{P}_{p^{k-1}} \in p^k\mathbb{Z}[x].
    \end{equation}
    Note that the roots of $\bar{P}_n$ are of the form $u^n\alpha_j^n$, which implies that
    \begin{displaymath}
        \bar{P}_n(x) = u^{dn}P_n\Big(\frac{x}{u^n}\Big).
    \end{displaymath}
    Hence, \eqref{weirdpolcongruence} can be restated as
    \begin{displaymath}
        u^{dp^k}P_{p^k}\Big(\frac{x}{u^{p^k}}\Big) - u^{dp^{k-1}}P_{p^{k-1}}\Big(\frac{x}{u^{p^{k-1}}}\Big) \in p^k \mathbb{Z}[x].
    \end{displaymath}
    If we denote by $a_{j,p^k}$ the coefficient of $x^j$ in $P_{p^k}$ (and similarly for $P_{p^{k-1}}$), we now have
    \begin{displaymath}
        a_{j,p^k}u^{(d-j)p^k} - a_{j,p^{k-1}}u^{(d-j)p^{k-1}} \in p^k \mathbb{Z}_{(p)}[x].
    \end{displaymath}
    Since
    \begin{displaymath}
        u^{(d-j)p^k} \equiv u^{(d-j)p^{k-1}} \bmod p^k,
    \end{displaymath}
    this implies that
    \begin{displaymath}
        a_{j,p^k} - a_{j,p^{k-1}} \in p^k \mathbb{Z}_{(p)}[x],
    \end{displaymath}
    which proves the first claim. The second claim follows from the first one via the same argument as the one used in the proof of Corollary \ref{Unspecial}.
\end{proof}

To prove Theorem \ref{Un} when $n$ is not a prime power, we prove for each prime $p \mid n$ that $U(n)$ is divisible by the correct power of $p$. More precisely, if $n = p_1^{k_1}\dots p_l^{k_l}$, then we prove that $U(n)$ is divisible by $p_t^{k_td^r}$ for $t=1,\dots,l$. The proof makes use of the polynomial
\begin{displaymath}
    Q(x) = \prod_{j_1,\dots,j_{2^{l-1}}=1}^d \Bigg(x- \frac{\prod_{m_s \in \mathcal{D}_+(n')} \alpha_{j_s}^{m_s}}{\prod_{m_s \in \mathcal{D}_-(n')} \alpha_{j_s}^{m_s}} \Bigg),
\end{displaymath}
where $n' = n/p_t^{k_t}$ and the divisors of $n'$ are ordered such that $\mathcal{D}_+(n') = \{m_1,\dots,m_{2^{l-2}}\}$ and $\mathcal{D}_-(n') = \{m_{2^{l-2}+1},\dots,m_{2^{l-1}}\}$. Write $Q(x) = \sum_{k=0}^{d^r} b_kx^k$.

\begin{lem} \label{Qint}
    If $p \nmid a_0$, then $Q \in \mathbb{Z}_{(p)}[x]$.
\end{lem}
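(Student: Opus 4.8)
The plan is to prove two things — (i) every root of $Q$ is integral over $\mathbb{Z}_{(p)}$, and (ii) $Q\in\mathbb{Q}[x]$ — and then invoke the fact that $\mathbb{Z}_{(p)}$, being a discrete valuation ring, is integrally closed in its fraction field $\mathbb{Q}$. Granting (i) and (ii), each coefficient $b_k$ is, up to sign, an elementary symmetric function of the roots of $Q$, hence integral over $\mathbb{Z}_{(p)}$; being also rational, it must lie in $\mathbb{Z}_{(p)}$, which is the assertion.

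For (i), the hypothesis $p\nmid a_0$ means precisely that $a_0$ is a unit of $\mathbb{Z}_{(p)}$, and this makes every root a $p$-adic unit. Indeed, each $\alpha_j$ is an algebraic integer, hence integral over $\mathbb{Z}_{(p)}$; and dividing $\sum_{k=0}^{d}a_k\alpha_j^{k}=0$ by $a_0\alpha_j^{d}$ exhibits $\alpha_j^{-1}$ as a root of the monic polynomial
\begin{displaymath}
x^{d}+\frac{a_1}{a_0}x^{d-1}+\dots+\frac{a_{d-1}}{a_0}x+\frac{1}{a_0}\in\mathbb{Z}_{(p)}[x],
\end{displaymath}
so $\alpha_j^{-1}$ is integral over $\mathbb{Z}_{(p)}$ as well. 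Since the elements of $\overline{\mathbb{Q}}$ integral over $\mathbb{Z}_{(p)}$ form a ring and each root of $Q$ is a product $\prod_{s}\alpha_{j_s}^{\pm m_s}$ with every $m_s\geq 1$, all roots of $Q$ are integral over $\mathbb{Z}_{(p)}$.

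For (ii) I would argue by Galois symmetry. Since $P$ is irreducible it is separable, so every $\mathbb{Q}$-automorphism $\sigma$ of $\overline{\mathbb{Q}}$ acts on $\{\alpha_1,\dots,\alpha_d\}$ as a permutation $\pi$, and it sends the root of $Q$ indexed by the tuple $(j_1,\dots,j_r)$ to the one indexed by $(\pi(j_1),\dots,\pi(j_r))$; as this is a bijection of $\{1,\dots,d\}^{r}$, $\sigma$ permutes the multiset of roots of $Q$ and hence fixes every $b_k$, so $b_k\in\mathbb{Q}$. A more computational route, which yields (ii) and the $\mathbb{Z}_{(p)}$-integrality together, is to set $f:=P_{\mathcal{D}_+(n')}$, $g:=P_{\mathcal{D}_-(n')}$ — both in $\mathbb{Z}[x]$, of common degree $D:=d^{2^{l-2}}$ — and use the identity
\begin{displaymath}
\Res_y\big(g(y),f(xy)\big)=\Big(\prod_{g(\gamma)=0}\gamma\Big)^{\!D}Q(x),
\end{displaymath}
whose left side lies in $\mathbb{Z}[x]$; here $\prod_{g(\gamma)=0}\gamma=\pm g(0)$, and $g(0)$, the constant term of $P_{\mathcal{D}_-(n')}$, works out (on collecting the root product one index at a time) to $\pm a_0$ raised to the positive power $d^{2^{l-2}-1}\sum_{m\in\mathcal{D}_-(n')}m$, a unit of $\mathbb{Z}_{(p)}$ exactly because $p\nmid a_0$, so dividing gives $Q\in\mathbb{Z}_{(p)}[x]$ at once.

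No step is individually hard; the work lies in the bookkeeping. In the first route the point requiring care is checking that the Galois action permutes the roots of $Q$ correctly, together with the reminder that $Q$ is a priori only in $\mathbb{Q}[x]$, so that the last step genuinely uses that $\mathbb{Z}_{(p)}$ is integrally closed. In the second route the care is in the resultant identity and in pinning down the power of $a_0$ in the constant term of $P_{\mathcal{D}_-(n')}$. I would present the first route for transparency and record the resultant computation as a remark.
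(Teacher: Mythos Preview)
Your first route is essentially the paper's own argument, just phrased in the language of integral closure rather than $p$-adic absolute values: the paper observes that $|\alpha_j|_p=1$ (equivalently, both $\alpha_j$ and $\alpha_j^{-1}$ are integral over $\mathbb{Z}_{(p)}$), deduces $|\beta|_p=1$ for each root $\beta$ of $Q$, and concludes $|b_k|_p\le 1$; Galois invariance supplies rationality in both versions. Your resultant computation is a genuinely different, self-contained alternative that the paper does not give; it trades the appeal to integral closure for an explicit identity and a bookkeeping check on the constant term of $P_{\mathcal{D}_-(n')}$, which is a nice complement.
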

\begin{proof}
    Note that the coefficients of $Q$ are rational because $Q$ is invariant under the action of the Galois group $\operatorname{Gal}(\mathbb{Q}(\alpha_1,\dots,\alpha_d)/\mathbb{Q})$. Moreover, since $P$ is monic and $p \nmid a_0$, all roots of $P$ satisfy $|\alpha_j|_p = 1$. Hence, if $\beta$ is any root of $Q$, then $|\beta|_p = 1$ as well. Since the coefficients of $Q$ are symmetric polynomials in the roots of $Q$, this implies that $|b_k|_p \leq 1$ for all $k$, which means precisely that the denominator of $b_k$ is not divisible by $p$ for any $k$.
\end{proof}

\begin{cor} \label{Ungcd}
    Let $a_0$ be the constant coefficient of $P$, and assume $(n, a_0) = 1$. Then $U(n)$ is divisible by $n^{d^r}$.
\end{cor}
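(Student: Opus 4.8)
Write $n = p_1^{k_1}\dots p_l^{k_l}$. Since $U(n)\in\mathbb{Z}$ and the prime powers $p_t^{k_t d^r}$ are pairwise coprime, it suffices to prove $v_{p_t}(U(n)) \geq k_t d^r$ for each $t$, where $v_p$ denotes the $p$-adic valuation; multiplying these then gives $n^{d^r}\mid U(n)$. When $n$ is a prime power this is Corollary \ref{Unspecial}, so we may assume $l\geq 2$. Fix $t$ and put $p = p_t$, $k = k_t$, $n' = n/p^k$. Since $(n,a_0)=1$ we have $p\nmid a_0$, so Lemma \ref{Qint} gives $Q\in\mathbb{Z}_{(p)}[x]$; moreover $Q$ is monic of degree $d^r$.

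The plan is to write $U(n)$ as $\Res(Q_{p^k},Q_{p^{k-1}})$ times a unit of $\mathbb{Z}_{(p)}$ and then apply Lemma \ref{Unspecialzp} to the monic polynomial $Q$, which yields $\Res(Q_{p^k},Q_{p^{k-1}})\in p^{k\deg Q}\mathbb{Z}_{(p)} = p^{k d^r}\mathbb{Z}_{(p)}$. To connect the two quantities I would first establish the decompositions
\begin{align*}
    \mathcal{D}_+(n) &= \{p^k m : m\in\mathcal{D}_+(n')\}\cup\{p^{k-1}m : m\in\mathcal{D}_-(n')\}, \\
    \mathcal{D}_-(n) &= \{p^k m : m\in\mathcal{D}_-(n')\}\cup\{p^{k-1}m : m\in\mathcal{D}_+(n')\},
\end{align*}
which follow from the observation that any $m\mid n$ with $\mu(n/m)\neq 0$ has $n/m$ squarefree, so $m = p^k m'$ or $m = p^{k-1}m'$ with $m'\mid n'$, according to whether $p\nmid n/m$ or $p\mid n/m$, and that $\mu(n/m)$ then equals $\mu(n'/m')$ in the first case and $-\mu(n'/m')$ in the second. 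Writing $A := P_{\mathcal{D}_+(n')}$ and $B := P_{\mathcal{D}_-(n')}$, which are monic of degree $D := d^{2^{l-2}}$ with $D^2 = d^r$, these decompositions show that the roots of $P_{\mathcal{D}_+(n)}$ are exactly the products $a^{p^k}b^{p^{k-1}}$ with $a$ a root of $A$ and $b$ a root of $B$, those of $P_{\mathcal{D}_-(n)}$ are the products $a^{p^{k-1}}b^{p^k}$, and those of $Q$ are the quotients $a/b$.

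Expanding the two resultants as products over roots then gives, with $a_1,a_2$ ranging over the roots of $A$ and $b_1,b_2$ over the roots of $B$,
\begin{displaymath}
    U(n) = \prod_{a_1,a_2,b_1,b_2}\left(a_1^{p^k}b_1^{p^{k-1}} - a_2^{p^{k-1}}b_2^{p^k}\right), \qquad \Res(Q_{p^k},Q_{p^{k-1}}) = \prod_{a_1,a_2,b_1,b_2}\frac{a_1^{p^k}b_2^{p^{k-1}} - a_2^{p^{k-1}}b_1^{p^k}}{b_1^{p^k}b_2^{p^{k-1}}}.
\end{displaymath}
Relabelling $b_1\leftrightarrow b_2$ in the numerator of the second product turns it into the product defining $U(n)$, so $U(n) = \Res(Q_{p^k},Q_{p^{k-1}})\cdot\prod_{a_1,a_2,b_1,b_2} b_1^{p^k}b_2^{p^{k-1}}$. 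Collecting terms, the last factor equals $\left((-1)^D B(0)\right)^{D^3(p^k + p^{k-1})}$, and since $P$ is monic with $p\nmid a_0$ every root of $P$ — hence every root of $B$ — has $p$-adic absolute value $1$, so $p\nmid B(0)$ and this factor is a unit in $\mathbb{Z}_{(p)}$. Therefore $v_p(U(n)) = v_p(\Res(Q_{p^k},Q_{p^{k-1}})) \geq k d^r$, which is what we wanted.

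I expect the only real work to lie in two pieces of combinatorial bookkeeping: establishing the decompositions of $\mathcal{D}_\pm(n)$ in terms of $\mathcal{D}_\pm(n')$ — in particular getting the sign change on the $p^{k-1}$ branch right — and verifying that the relabelling $b_1\leftrightarrow b_2$ genuinely identifies the numerator of $\Res(Q_{p^k},Q_{p^{k-1}})$ with $U(n)$. After that, the appeal to Lemma \ref{Unspecialzp} and the check that the leftover factor is a unit in $\mathbb{Z}_{(p)}$ are routine.
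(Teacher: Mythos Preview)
Your proof is correct and follows essentially the same route as the paper: both reduce to one prime $p\mid n$ at a time, establish the same decomposition of $\mathcal{D}_\pm(n)$ in terms of $\mathcal{D}_\pm(n')$, factor $U(n)$ as $\Res(Q_{p^k},Q_{p^{k-1}})$ times a $p$-adic unit (a power of $\pm a_0$), and then invoke Lemmas~\ref{Qint} and~\ref{Unspecialzp}. Your introduction of the auxiliary polynomials $A=P_{\mathcal{D}_+(n')}$ and $B=P_{\mathcal{D}_-(n')}$ and the relabelling $b_1\leftrightarrow b_2$ is a clean way to package the bookkeeping, but the underlying argument is identical to the paper's.
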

\begin{proof}
    As indicated, we will prove the claim by showing that $U(n)$ is divisible by $p_t^{k_td^r}$ for $t=1,\dots,l$. Fix $t \in \{1,\dots,l\}$ and let $n' := \frac{n}{p^k}$, where we have omitted the index $t$ for simplicity. Then we have that
    \begin{align*}
        \mathcal{D}_+(n) &= \{p^k m: m \in \mathcal{D}_+(n')\} \cup \{p^{k-1}m:m \in \mathcal{D}_-(n')\}, \\
        \mathcal{D}_-(n) &= \{p^k m: m \in \mathcal{D}_-(n')\} \cup \{p^{k-1}m:m \in \mathcal{D}_+(n')\}.
    \end{align*}
    Hence, $U(n)$ is the product of all elements of the form
    \begin{align*}
        &\prod_{m_s \in \mathcal{D}_+(n')} \alpha_{j_s}^{p^{k} m_s}\prod_{m_s \in \mathcal{D}_-(n')} \alpha_{j_s}^{p^{k-1}m_s} - \prod_{m_s \in \mathcal{D}_-(n')} \alpha_{j'_s}^{p^k m_s}\prod_{m_s \in \mathcal{D}_+(n')} \alpha_{j'_s}^{p^{k-1}m_s} \\
        = &\prod_{m_s \in \mathcal{D}_-(n')} \big(\alpha_{j_s}^{p^{k-1}m_s}\alpha_{j'_s}^{p^km_s}\big) \Bigg( \frac{\prod_{m_s \in \mathcal{D}_+(n')} \alpha_{j_s}^{p^km_s}}{\prod_{m_s\in\mathcal{D}_-(n')} \alpha_{j'_s}^{p^km_s}} - \frac{\prod_{m_s \in \mathcal{D}_+(n')} \alpha_{j'_s}^{p^{k-1}m_s}}{\prod_{m_s \in \mathcal{D}_-(n')} \alpha_{j_s}^{p^{k-1}m_s}} \Bigg).
    \end{align*}
    Since
    \begin{displaymath}
        \prod_{j_s,j'_s=1}^d \prod_{m_s \in \mathcal{D}_-(n')} \big(\alpha_{j_s}^{p^{k-1}m_s}\alpha_{j'_s}^{p^km_s}\big) = a_0^{(p^k+p^{k-1})\sum_{m \in \mathcal{D}_-(n')}m} =: N,
    \end{displaymath}
    and
    \begin{displaymath}
        \prod_{j_s,j'_s=1}^d \Bigg( \frac{\prod_{m_s \in \mathcal{D}_+(n')} \alpha_{j_s}^{p^km_s}}{\prod_{m_s\in\mathcal{D}_-(n')} \alpha_{j'_s}^{p^km_s}} - \frac{\prod_{m_s \in \mathcal{D}_+(n')} \alpha_{j'_s}^{p^{k-1}m_s}}{\prod_{m_s \in \mathcal{D}_-(n')} \alpha_{j_s}^{p^{k-1}m_s}} \Bigg) = \Res(Q_{p^k}, Q_{p^{k-1}}),
    \end{displaymath}
    (in both cases the product ranges over all possible values of all indices $j_s$ and $j'_s$), we have that
    \begin{displaymath}
        U(n) = N\cdot \Res(Q_{p^k}, Q_{p^{k-1}}).
    \end{displaymath}
    Since $(n,a_0) = 1$, Lemma \ref{Qint} implies that $Q \in \mathbb{Z}_{(p)}[x]$, so we can use Corollary \ref{Unspecialzp} to conclude that $p^{k d^r} \mid U(n)$. Since the same argument can be applied to all prime factors of $n$, it follows that
    \begin{displaymath}
        n^{d^r} \mid U(n). \qedhere
    \end{displaymath}
\end{proof}

\begin{rmk}
    From the proof it also follows that if $m \mid n$ and $(m,a_0) = 1$, then $U(n)$ is divisible by $m^{d^r}$.
\end{rmk}

To finish the proof of Theorem \ref{Un}, we have to deal with the case $(n, a_0) > 1$. We do this by showing that $U(n)$ is always divisible by some power of $a_0$.

\begin{thm} \label{Unconstant}
    Let $n = p_1^{k_1}\dots p_l^{k_l}$ be the prime factorisation of $n$, and define $n'= p_1^{k_1-1}\dots p_l^{k_l-1}$. Then $U(n)$ is divisible by
    \begin{displaymath}
        a_0^{n'2^{2(l-1)}d^{2^l-2}}
    \end{displaymath}
    where $a_0$ is the constant coefficient of $P$ and $l$ is the number of distinct prime factors of $n$.
\end{thm}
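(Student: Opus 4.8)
Here is how I would approach this. The plan is to exhibit a large power of $\prod_{i=1}^{d}\alpha_i=\pm a_0$ as a monomial divisor of $U(n)$ inside the ring $\mathcal{O}$ of algebraic integers of the splitting field of $P$, and then descend to $\mathbb{Z}$; we may assume $a_0\neq 0$ (otherwise $U(n)=0$ and there is nothing to prove). Write $\mathcal{D}_+(n)=\{m_1,\dots,m_r\}$ and $\mathcal{D}_-(n)=\{m_1',\dots,m_r'\}$ with $r=2^{l-1}$, so that
\[
U(n)=\prod_{\mathbf{j},\mathbf{j}'\in\{1,\dots,d\}^r}\Bigl(\textstyle\prod_{s=1}^{r}\alpha_{j_s}^{m_s}-\prod_{s=1}^{r}\alpha_{j_s'}^{m_s'}\Bigr),
\]
where $\mathbf{j}=(j_1,\dots,j_r)$ and $\mathbf{j}'=(j_1',\dots,j_r')$. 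Each factor is a difference of two monomials in $\alpha_1,\dots,\alpha_d$; extracting their greatest common monomial divisor $\prod_{i=1}^{d}\alpha_i^{c_i(\mathbf{j},\mathbf{j}')}$, with $c_i(\mathbf{j},\mathbf{j}')=\min\bigl(\sum_{s:\,j_s=i}m_s,\ \sum_{s:\,j_s'=i}m_s'\bigr)$, leaves a cofactor that is still an algebraic integer (a difference of two monomials in the $\alpha_i$ with non-negative exponents). Hence $\prod_{i=1}^{d}\alpha_i^{C_i}$ divides $U(n)$ in $\mathcal{O}$, where $C_i=\sum_{\mathbf{j},\mathbf{j}'}c_i(\mathbf{j},\mathbf{j}')$; since this is in turn divisible by $\bigl(\prod_i\alpha_i\bigr)^{\min_i C_i}=(\pm a_0)^{\min_i C_i}$ and $U(n)\in\mathbb{Z}$, we obtain $a_0^{\min_i C_i}\mid U(n)$ in $\mathbb{Z}$.

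It then remains to show $\min_i C_i\geq n'\,2^{2(l-1)}d^{2^{l}-2}$. The formula for $C_i$ is invariant under relabelling the roots, so $C_i$ is independent of $i$, and we may bound $C_1$. Grouping $\mathbf{j}$ by the subset $T=\{s:j_s=1\}\subseteq\{1,\dots,r\}$, which is realised by $(d-1)^{r-|T|}$ tuples, and $\mathbf{j}'$ by $T'=\{s:j_s'=1\}$ likewise, and writing $M_+(T)=\sum_{s\in T}m_s$, $M_-(T')=\sum_{s\in T'}m_s'$, the claim becomes the purely combinatorial inequality
\[
\sum_{T,T'\subseteq\{1,\dots,r\}}(d-1)^{2r-|T|-|T'|}\min\bigl(M_+(T),M_-(T')\bigr)\ \geq\ n'\,r^{2}\,d^{2r-2}.
\]
To attack this I would invoke the explicit description $\mathcal{D}_\pm(n)=\{\,n'\prod_{i\in J}p_i:\ |J|\equiv l\ (\text{resp. }l-1)\bmod 2\,\}$. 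Since every element of $\mathcal{D}_\pm(n)$ is $n'$ times the corresponding squarefree divisor of $\mathrm{rad}(n)$, and $P_n=(P_{n'})_{n/n'}$, one has $U(P;n)=U(P_{n'};\mathrm{rad}(n))$; as $P_{n'}$ has constant coefficient $\pm a_0^{n'}$, the general case follows from the squarefree case ($n'=1$).

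The sharp combinatorial inequality is where I expect the real work to be. Using only $m_s,m_s'\geq n'$ gives $C_1\geq n'\sum_{T,T'}(d-1)^{2r-|T|-|T'|}\min(|T|,|T'|)$, and writing $\min(|T|,|T'|)=\sum_{t\geq1}[\,t\leq|T|\,][\,t\leq|T'|\,]$ this double sum equals $\sum_{t=1}^{r}f(t)^2$ with $f(t)=\sum_{a=t}^{r}\binom{r}{a}(d-1)^{r-a}$ and $\sum_{t=1}^{r}f(t)=r\,d^{r-1}$, so Cauchy--Schwarz yields only $C_1\geq n'\,r\,d^{2r-2}$ — short of the target by the factor $r=2^{l-1}$. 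Thus the estimate must genuinely exploit that the elements of $\mathcal{D}_\pm(n)$ are much larger than $n'$ (they include $n'\cdot\mathrm{rad}(n)$, $n'\cdot\mathrm{rad}(n)/p_1$, and so on), the worst case being the smallest admissible primes $p_i=2,3,5,\dots$; making this quantitative uniformly in $d$, $l$ and the $p_i$ is the crux of the proof. As consistency checks, for $l=1$ one computes $C_1=n'$, exactly the claimed exponent, and for $l=2$ one gets $C_1=n'\bigl[(d-1)^2(2+p_1+p_2)+(d-1)(1+2p_1+2p_2)+(p_1+p_2)\bigr]\geq 4n'd^2$.
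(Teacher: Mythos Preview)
Your framework is sound: extracting the greatest common monomial from each factor and reducing to the exponent $C_1=\sum_{T,T'}(d-1)^{2r-|T|-|T'|}\min(M_+(T),M_-(T'))$ is exactly the right quantity, and your reduction to the squarefree case via $U(P;n)=U(P_{n'};\mathrm{rad}(n))$ is clean. But the proof is incomplete: you reduce everything to the inequality $C_1\ge n'r^2d^{2r-2}$ and then stop, having shown only $C_1\ge n'rd^{2r-2}$ via $m_s\ge n'$ and acknowledged the missing factor of $r$. You verify $l=1,2$ by hand, but the general case is left open in your write-up.

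The paper takes a different, more direct route, though it too is informal (only $l=2$ is treated explicitly). The idea is \emph{matching}: for each of the $r^2$ pairs $(a,b)$ with $m_a\in\mathcal D_+(n)$ and $m_b'\in\mathcal D_-(n)$, restrict to the subproduct with $j_a=j_b'$, factor $\alpha_{j_a}^{n'}$ out of every term (both exponents are $\ge n'$), and collect $a_0^{n'd^{2r-2}}$; then multiply the $r^2$ contributions. In your language this is the allocation $\min(M_+(T),M_-(T'))\rightsquigarrow n'|T|\,|T'|$, which sums to \emph{exactly} $n'r^2d^{2r-2}$ since $\sum_T(d-1)^{r-|T|}|T|=rd^{r-1}$. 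This explains where the factor $r^2=2^{2(l-1)}$ comes from, which your Cauchy--Schwarz bound misses. However, the pointwise inequality $\min(M_+(T),M_-(T'))\ge n'|T|\,|T'|$ is false (take $T=\{s\}$ with $m_s=n'$ and $|T'|\ge2$), so the paper's step of simply multiplying the $r^2$ contributions is not rigorously justified either---the subproducts overlap. Your $l=2$ computation confirms that the \emph{aggregate} inequality nonetheless holds there; proving it for all $l$ is what remains. Note finally that your weaker bound $C_1\ge n'rd^{2r-2}$ already suffices for the only downstream use (Theorem~\ref{Un}), where the paper invokes merely $2^{2(l-1)}\ge1$.
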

\begin{proof}
    We give a proof for the special case where $n=p^2q^2$ is the product of two squares of primes, together with some comments about how this idea can be applied to the general case. We have $n' = pq$ and
    \begin{displaymath}
        U(n) = \prod_{j_1,j_2,j_3,j_4=1}^d \big(\alpha_{j_1}^n\alpha_{j_2}^{n'} - \alpha_{j_3}^{p^2q}\alpha_{j_4}^{pq^2}\big).
    \end{displaymath}
    Note that every exponent appearing in this product is divisible by $n'$, and hence greater than or equal to it. Now consider the factors which have an equal index appearing on both sides of the minus sign, e.g.
    \begin{displaymath}
        U(n) = \prod_{j_1,j_2,j_3=1}^d \big(\alpha_{j_1}^n\alpha_{j_2}^{n'} - \alpha_{j_3}^{p^2q}\alpha_{j_2}^{pq^2}\big),
    \end{displaymath}
    where we have taken $j_2=j_4$. Since this product has $d^3$ factors, it can be rewritten as
    \begin{displaymath}
        \pm a_0^{n'd^2} \prod_{j_1,j_2,j_3=1}^d \big(\alpha_{j_1}^n - \alpha_{j_3}^{p^2q}\alpha_{j_2}^{pq^2-n'}\big),
    \end{displaymath}
    which shows that this factor is divisible by $a_0^{n'd^2}$. In the general case, this product contains $2^l-1$ distinct indices, so that it is divisible by $a_0^{n'd^{2^l-2}}$.

    Above, we took $j_2=j_4$, but we could also have chosen another pair of indices. Since there are two indices on both sides of the minus sign, there are four ways to choose such a pair of indices. Hence, in this special case, $U(n)$ is divisible by $a_0^{4n'd^2}$.
    
    In the general case, there are $2^{l-1}$ indices on each side of the minus sign, so there are $2^{2(l-1)}$ ways of making an equal index appear on both sides. Hence, we can construct $2^{2(l-1)}$ factors of $U(n)$ which are divisible by $a_0^{n'd^{2^l-2}}$, so $U(n)$ is divisible by
    \begin{displaymath}
        a_0^{n'2^{2(l-1)}d^{2^l-2}}. \qedhere
    \end{displaymath}
\end{proof}

\begin{rmk}
    While this result is already much stronger than necessary for our purposes, it is possible to show that $U(n)$ is divisible by an even higher power of $a_0$.
\end{rmk}

\begin{proof}[Proof of Theorem \ref{Un}]
    The case $l=1$ was covered in Corollary \ref{Unspecial}, so we assume $l \geq 2$. The proof of Corollary \ref{Ungcd} implies that $U(n)$ is divisible by $p^{kd^r}$ for all prime powers $p^k \mid n$ for which $p \nmid a_0$. Now suppose $p^k \mid n$ and $p \mid a_0$. Then by Theorem \ref{Unconstant}, $U(n)$ is divisible by
    \begin{displaymath}
        a_0^{n'2^{2(l-1)}d^{2^l-2}}.
    \end{displaymath}
    Since $p \mid a_0$ and $p^{k-1} \mid n'$, this is divisible by
    \begin{displaymath}
        p^{p^{k-1}2^{2(l-1)}d^{2^l-2}}.
    \end{displaymath}
    Since $p^{k-1} \geq k$, $2^{2(l-1)} \geq 1$ and $2^l-2 \geq 2^{l-1}$ (here we used the assumption that $l \geq 2$), this is divisible by
    \begin{displaymath}
        p^{kd^{2^{l-1}}}. \qedhere
    \end{displaymath}
\end{proof}

It is possible to bound $U(n)$ from above using the Mahler measure.
\begin{thm} \label{Unupperbound}
    Let $P \in \mathbb{Z}[x]$ be a monic polynomial of degree $d$ and let $n \in \mathbb{Z}_{>1}$. Then
    \begin{displaymath}
        |U(n)| \leq 2^{d^{2^l}} M(P)^{d^{2^l}\sum_{m\in\mathcal{D}_+(n)}m},
    \end{displaymath}
    where $l$ is the number of distinct prime factors of $n$.
\end{thm}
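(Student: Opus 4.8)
The plan is to write $U(n)$ as a product over pairs of roots and to bound each factor with the elementary inequality that every root of a monic integer polynomial has absolute value at most its Mahler measure.

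First I would record that, with $r := 2^{l-1}$, both $P_{\mathcal{D}_+(n)}$ and $P_{\mathcal{D}_-(n)}$ are monic of degree $d^{r}$, so that
\[
|U(n)| = \bigl|\Res\bigl(P_{\mathcal{D}_+(n)},P_{\mathcal{D}_-(n)}\bigr)\bigr| = \prod_{\beta,\gamma}|\beta-\gamma|,
\]
the product being over the $d^{r}$ roots $\beta$ of $P_{\mathcal{D}_+(n)}$ and the $d^{r}$ roots $\gamma$ of $P_{\mathcal{D}_-(n)}$, counted with multiplicity; there are exactly $d^{2r}=d^{2^{l}}$ factors. By construction the roots of $P_{\mathcal{D}_+(n)}$ are the numbers $\beta=\prod_{s}\alpha_{j_s}^{m_s}$ with $m_s$ ranging over the multiset $\mathcal{D}_+(n)$ and each $j_s\in\{1,\dots,d\}$, and likewise for $P_{\mathcal{D}_-(n)}$.

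Next I would use that for a monic $P\in\mathbb{Z}[x]$ one has $|\alpha_j|\le M(P)$ for every root $\alpha_j$: if $|\alpha_j|\ge 1$ this is immediate from the definition of the Mahler measure, and otherwise $|\alpha_j|<1\le M(P)$. It follows that every root $\beta$ of $P_{\mathcal{D}_+(n)}$ obeys $|\beta|\le M(P)^{\sum_{m\in\mathcal{D}_+(n)}m}$ and every root $\gamma$ of $P_{\mathcal{D}_-(n)}$ obeys $|\gamma|\le M(P)^{\sum_{m\in\mathcal{D}_-(n)}m}$. To turn both estimates into the same power of $M(P)$ I would invoke the identity $\sum_{m\mid n}\mu(n/m)\,m=(\mu*\mathrm{id})(n)=\varphi(n)\ge 0$, which gives $\sum_{m\in\mathcal{D}_+(n)}m\ge\sum_{m\in\mathcal{D}_-(n)}m$; since $M(P)\ge 1$, this yields $|\gamma|\le M(P)^{\sum_{m\in\mathcal{D}_+(n)}m}$ as well.

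Finally, the triangle inequality gives $|\beta-\gamma|\le|\beta|+|\gamma|\le 2\,M(P)^{\sum_{m\in\mathcal{D}_+(n)}m}$ for each of the $d^{2^{l}}$ pairs, and multiplying these bounds produces precisely
\[
|U(n)|\le 2^{d^{2^{l}}}\,M(P)^{d^{2^{l}}\sum_{m\in\mathcal{D}_+(n)}m}.
\]
I do not expect a genuine obstacle here; the only slightly non-obvious point is the comparison $\sum_{m\in\mathcal{D}_+(n)}m\ge\sum_{m\in\mathcal{D}_-(n)}m$, which is exactly what forces the exponent of $M(P)$ in the statement to be $\sum_{m\in\mathcal{D}_+(n)}m$ rather than a maximum of the two sums.
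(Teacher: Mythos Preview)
Your proof is correct and follows essentially the same route as the paper: write $U(n)$ as a product of $d^{2^l}$ differences of root-products, bound each root by $M(P)$, and collect the factors. The paper uses $|a-b|\le 2\max\{|a|,|b|\}$ in place of your $|a|+|\gamma|$, which is equivalent here; your explicit verification that $\sum_{m\in\mathcal{D}_+(n)}m\ge\sum_{m\in\mathcal{D}_-(n)}m$ via $(\mu*\mathrm{id})(n)=\varphi(n)\ge 0$ is a point the paper leaves implicit.
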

\begin{proof}
    Each factor of $U(n)$ is of the form
    \begin{displaymath}
        \prod_{m \in \mathcal{D}_+(n)}\alpha_{j_m}^m-\prod_{m\in\mathcal{D}_-(n)}\alpha_{j_m}^m,
    \end{displaymath}
    and the absolute value of this can be estimated as follows:
    \begin{align*}
        \Bigg|\prod_{m \in \mathcal{D}_+(n)}\alpha_{j_m}^m-\prod_{m\in\mathcal{D}_-(n)}\alpha_{j_m}^m\Bigg| &\leq 2\max\Bigg\{\prod_{m \in \mathcal{D}_+(n)}\alpha_{j_m}^m, \prod_{m\in\mathcal{D}_-(n)}\alpha_{j_m}^m\Bigg\} \\
        &\leq 2M(P)^{\sum_{m\in\mathcal{D}_+(n)}m},
    \end{align*}
    where we have used that each root has absolute value $\leq M(P)$. Since $U(n)$ has $d^{2^l}$ factors in total, the result follows.
\end{proof}

One might try to use this estimate together with the divisibility of $U(n)$ to derive lower bounds for the Mahler measure. However, one runs into two problems when doing this: firstly, $U(n)$ may be zero, and secondly, even if $U(n) \neq 0$, the bound obtained from combining Theorem \ref{Unupperbound} and Theorem \ref{Un} is weaker than Dobrowolski's.

\section{Gauss congruences for discriminants and resultants}
From Lemma \ref{Gausspol} we can deduce the following:
\begin{thm}
    Let $P \in \mathbb{Z}[x]$ be a monic polynomial (not necessarily irreducible). Then the following holds:
    \begin{displaymath}
        \sum_{m \mid n} \mu\Big(\frac nm\Big) \Delta(P_m) \equiv 0 \mod n.
    \end{displaymath}
\end{thm}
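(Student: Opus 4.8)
The plan is to deduce the congruence for the sequence $\{\Delta(P_n)\}$ from the coefficientwise congruences in Lemma \ref{Gausspol}, using only the fact that $\Delta(P_m)$ is a fixed integer‑polynomial expression in the coefficients of $P_m$. The proof does not need $P$ irreducible, since neither does Lemma \ref{Gausspol}.

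First I would record the algebraic input. Writing $P_m(x) = x^d + a_{d-1,m}x^{d-1} + \dots + a_{0,m}$, the discriminant satisfies $\Delta(P_m) = (-1)^{d(d-1)/2}\Res(P_m,P_m')$, and the Sylvester determinant exhibits this as $D(a_{0,m},\dots,a_{d-1,m})$ for one fixed polynomial $D \in \mathbb{Z}[x_0,\dots,x_{d-1}]$ independent of $m$ (the coefficients of $P_m'$ are integer multiples of those of $P_m$, so they introduce no denominators). The elementary fact I will use is that congruent integer inputs give congruent outputs: if two integer vectors agree modulo $N$ coordinatewise, then $D$ takes values on them that agree modulo $N$. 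Consequently, once the coefficient sequences of the $P_m$ are shown to satisfy suitable congruences of the shape "coefficients of $P_{m}$ $\equiv$ coefficients of $P_{m'}$ mod $N$'', the sequence $m\mapsto\Delta(P_m)$ inherits them — and in fact so does $m \mapsto F(a_{0,m},\dots,a_{d-1,m})$ for any fixed $F \in \mathbb{Z}[x_0,\dots,x_{d-1}]$.

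Next I would produce those congruences. Applying Lemma \ref{Gausspol} not to $P$ but to the monic integer polynomial $P_M$, whose $j$‑th associated polynomial is $(P_M)_j = P_{Mj}$, with modulus $n = p^k$, the divisor sum collapses — only $p^k$ and $p^{k-1}$ carry nonzero Möbius weight, exactly as in the proof of Corollary \ref{Unspecial} — giving, coefficientwise,
\[
    P_{Mp^k} \equiv P_{Mp^{k-1}} \pmod{p^k}
\]
for every prime $p$, every $k \geq 1$ and every $M \geq 1$. By the previous paragraph this at once yields $\Delta(P_{Mp^k}) \equiv \Delta(P_{Mp^{k-1}}) \pmod{p^k}$.

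Finally I would assemble the statement one prime at a time. Fix $n$ and a prime $p$, and let $p^v$ be the exact power of $p$ dividing $n$, say $n = p^v n'$ with $p \nmid n'$. A divisor $m \mid n$ contributes to $\sum_{m\mid n}\mu(n/m)\Delta(P_m)$ only when $n/m$ is squarefree, which forces $m \in \{p^{v}m',\, p^{v-1}m'\}$ with $m' \mid n'$ and $n'/m'$ squarefree; pairing these two families gives
\[
    \sum_{m \mid n}\mu\Big(\tfrac nm\Big)\Delta(P_m) = \sum_{\substack{m' \mid n' \\ \mu(n'/m') \neq 0}} \mu\Big(\tfrac{n'}{m'}\Big)\big(\Delta(P_{p^v m'}) - \Delta(P_{p^{v-1}m'})\big).
\]
Each summand is divisible by $p^v$ by the congruence just established (taking $M = m'$, $k = v$), so the whole sum is $\equiv 0 \bmod p^v$; as this holds for every prime power exactly dividing $n$, the Chinese Remainder Theorem gives divisibility by $n$. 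I expect the only point requiring care to be this last regrouping — tracking which divisors survive the Möbius weights and how they pair — since the passage from the coefficients of $P_m$ to $\Delta(P_m)$ is purely formal.
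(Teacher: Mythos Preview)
Your proof is correct and follows essentially the same route as the paper: reduce to one prime at a time, use Lemma~\ref{Gausspol} (applied to $P_M$) to get $P_{Mp^k}\equiv P_{Mp^{k-1}}\pmod{p^k}$ and hence the same for the discriminants, then regroup the M\"obius sum into differences $\Delta(P_{p^v m'})-\Delta(P_{p^{v-1}m'})$. Your write-up is in fact a bit more explicit than the paper's in two places---you spell out that $\Delta$ is a fixed integer polynomial in the coefficients (so coefficientwise congruences descend), and you explicitly apply Lemma~\ref{Gausspol} to $P_M$ rather than to $P$, which is exactly what is needed to justify the congruence $\Delta(P_{p^k m})\equiv\Delta(P_{p^{k-1}m})\pmod{p^k}$ that the paper invokes with the phrase ``We have already seen that''.
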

\begin{proof}
    If $n = p^k$, then Lemma \ref{Gausspol} says that
    \begin{displaymath}
        P_{p^k} \equiv P_{p^{k-1}} \mod p^k.
    \end{displaymath}
    From the explicit expression of the discriminant of a polynomial in terms of its coefficients it now follows that
    \begin{displaymath}
        \Delta(P_{p^k}) \equiv \Delta(P_{p^{k-1}}) \mod p^k.
    \end{displaymath}
    To prove the claim for general $n$, we once again use the approach of proving it for each prime divisor of $n$ separately. Let $p \mid n$ and write $n = p^kn'$ with $p \nmid n'$. Then we can form the sets
    \begin{align*}
        D_1 &:= \{m \mid n: \mu\Big(\frac nm\Big) = 1, p^k \mid m\}, \\
        D_2 &:= \{m \mid n: \mu\Big(\frac nm\Big) = -1, p^k \nmid m\}, \\
        D_3 &:= \{m \mid n: \mu\Big(\frac nm\Big) = -1, p^k \mid m\}, \\
        D_4 &:= \{m \mid n: \mu\Big(\frac nm\Big) = 1, p^k \nmid m\}.
    \end{align*}
    Note that the requirement that $\mu(n/m) \neq 0$ forces $m$ to be divisible by $p^{k-1}$. Lemma \ref{Gausspol} now reads
    \begin{displaymath}
        \sum_{m \in D_1} P_m - \sum_{m \in D_2} P_m - \sum_{m \in D_3} P_m + \sum_{m \in D_4} P_m \equiv 0 \mod n.
    \end{displaymath}
    Since every element of $D_1$ is of the form $pm$ for some $m \in D_2$ and every element of $D_3$ is of the form $pm$ for some $m \in D_4$, the left-hand side can be rewritten as
    \begin{displaymath}
        \sum_{m \in D_2} (P_{pm} - P_m) - \sum_{m \in D_4} (P_{pm} - P_{m}).
    \end{displaymath}
    Because every $m$ in the above expression is divisible by $p^{k-1}$, we can rewrite it as
    \begin{displaymath}
        \sum_{m \in D_2'} (P_{p^km} - P_{p^{k-1}m}) - \sum_{m \in D_4'} (P_{p^km}-P_{p^{k-1}m}),
    \end{displaymath}
    where $D_i' := \{m \in \mathbb{Z}: p^{k-1}m \in D_i\}$. We have already seen that
    \begin{displaymath}
        \Delta(P_{p^km}) - \Delta(P_{p^{k-1}m}) \equiv 0 \mod p^k.
    \end{displaymath}
    This implies that
    \begin{align*}
        \sum_{m \mid n} \mu\Big(\frac nm\Big) \Delta(P_m) &= \sum_{m \in D_2'} (\Delta(P_{p^km}) - \Delta(P_{p^{k-1}m})) - \sum_{m \in D_4'} (\Delta(P_{p^km})-\Delta(P_{p^{k-1}m})) \\
        &\equiv 0 \mod p^k.
    \end{align*}
    Since this argument can be applied to every prime divisor of $n$, the theorem follows.
\end{proof}

While these congruences are interesting in their own right, they do not seem to be immediately applicable to the problem of bounding the Mahler measure. This is because the congruences do not imply any lower bounds for the sequence $\{\Delta(P_n)\}$.

Using a similar argument, one can also prove the following:
\begin{thm}
    Let $P,Q \in \mathbb{Z}[x]$ be monic polynomials. Then
    \begin{displaymath}
        \sum_{m \mid n} \mu\Big(\frac{n}{m}\Big) \Res(P_m,Q) \equiv 0 \bmod n.
    \end{displaymath}
\end{thm}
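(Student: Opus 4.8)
The plan is to mimic the proof of the Gauss congruence for discriminants, replacing the discriminant by the resultant and keeping the same reduction to prime powers. The structural fact that makes this work is that, for a fixed monic polynomial $Q$, the resultant $\Res(R,Q)$ is a polynomial with integer coefficients in the coefficients of $R$ (it is, up to sign, the Sylvester determinant, whose entries are the coefficients of $R$ and of $Q$). Hence if $R_1,R_2 \in \mathbb{Z}[x]$ are monic of the same degree and $R_1 \equiv R_2 \bmod p^k$ coefficientwise, then automatically $\Res(R_1,Q) \equiv \Res(R_2,Q) \bmod p^k$. This is the exact analogue of the step ``$\Delta(P_{p^k}) \equiv \Delta(P_{p^{k-1}}) \bmod p^k$'' from the previous proof.

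First I would dispose of the prime-power case. Applying Lemma \ref{Gausspol} to the polynomial $P_e$ (for an arbitrary $e \geq 1$) and modulus $p^k$, and using $(P_e)_{p^k} = P_{p^k e}$ and $(P_e)_{p^{k-1}} = P_{p^{k-1}e}$, we get $P_{p^k e} \equiv P_{p^{k-1}e} \bmod p^k$ coefficientwise. Combined with the observation above, this gives
\begin{displaymath}
    \Res(P_{p^k e},Q) \equiv \Res(P_{p^{k-1}e},Q) \bmod p^k
\end{displaymath}
for every $e \geq 1$ and every $k \geq 1$.

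Next I would treat general $n$ one prime at a time. Fix a prime $p \mid n$, write $n = p^k n'$ with $p \nmid n'$, and note that the divisors $m$ of $n$ with $\mu(n/m) \neq 0$ are exactly those of the form $m = p^k e$ or $m = p^{k-1}e$ with $e \mid n'$ and $\mu(n'/e) \neq 0$, with $\mu(n/m) = \mu(n'/e)$ in the first case and $\mu(n/m) = -\mu(n'/e)$ in the second (using $p \nmid n'/e$). Grouping the sum accordingly,
\begin{displaymath}
    \sum_{m \mid n} \mu\Big(\frac nm\Big)\Res(P_m,Q) = \sum_{\substack{e \mid n' \\ \mu(n'/e) \neq 0}} \mu\Big(\frac{n'}{e}\Big)\Big(\Res(P_{p^k e},Q) - \Res(P_{p^{k-1}e},Q)\Big),
\end{displaymath}
and every bracket is divisible by $p^k$ by the prime-power case, so the left side is $\equiv 0 \bmod p^k$. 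Since this holds for the exact power $p^k \| n$ of each prime dividing $n$, the Chinese Remainder Theorem yields $\sum_{m\mid n}\mu(n/m)\Res(P_m,Q) \equiv 0 \bmod n$. (Alternatively, one could organise this step via the same four-set partition $D_1,\dots,D_4$ used in the discriminant proof.)

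I expect no serious obstacle; the only point needing a word of care is the claim that reducing the coefficients of a monic $R$ modulo $p^k$ reduces $\Res(R,Q)$ modulo $p^k$, and this is immediate once one observes that all the polynomials $P_m$, $P_{p^{k-1}e}$, $\dots$ are monic of the same degree $d$, so their Sylvester matrices against $Q$ have identical shape and the reduction is literally entrywise. Everything else is the divisor-and-Möbius bookkeeping already carried out for discriminants.
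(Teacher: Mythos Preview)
Your proposal is correct and is exactly the ``similar argument'' the paper alludes to: replace the discriminant by $\Res(\,\cdot\,,Q)$, use that this is an integer polynomial in the coefficients of the first argument (via the Sylvester determinant), and carry over the prime-by-prime reduction from Lemma~\ref{Gausspol} verbatim. Your divisor bookkeeping is a slightly cleaner packaging of the $D_1,\dots,D_4$ partition used in the discriminant proof, but the content is identical.
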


\section{Estimates for $\Delta(P_n)$ and their application to the Mahler measure}
The following lemma suggests that finding good estimates for $|\Delta(P_n)|$ might be useful for proving better bounds for the Mahler measure. In particular, if the constant $C$ can be chosen independently of $P$, then the lemma implies a solution to Lehmer's problem.

\begin{lem} \label{mpgeqc}
    Let $P$ be a monic polynomial, and suppose there exists a constant $C>1$, such that $|\Delta(P_n)| > C^{nd}$ for infinitely many $n$. Then we have
    \begin{displaymath}
        m(P) \geq \frac{\log C}{2}.
    \end{displaymath}
\end{lem}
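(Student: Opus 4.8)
The plan is to bound $|\Delta(P_n)|$ from above in terms of $M(P)$ and compare with the hypothesised lower bound $C^{nd}$. Recall that for a monic polynomial $R$ of degree $D$ with roots $\beta_1,\dots,\beta_D$ one has $\Delta(R) = \prod_{i<j}(\beta_i-\beta_j)^2$, so $|\Delta(R)| \le \prod_{i\neq j}|\beta_i-\beta_j| \le \prod_{i\neq j} 2\max\{|\beta_i|,|\beta_j|\}$. Applying this to $R = P_n$, whose roots are $\alpha_1^n,\dots,\alpha_d^n$, each factor $|\alpha_i^n - \alpha_j^n|$ is at most $2\max\{|\alpha_i|^n,|\alpha_j|^n\} \le 2M(P)^n$, since every root $\alpha$ of $P$ satisfies $|\alpha| \le M(P)$. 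There are $d(d-1) < d^2$ ordered pairs $(i,j)$ with $i \neq j$, so we obtain
\begin{displaymath}
    |\Delta(P_n)| \le \bigl(2M(P)^n\bigr)^{d(d-1)} = 2^{d(d-1)} M(P)^{nd(d-1)} \le 2^{d^2} M(P)^{nd(d-1)}.
\end{displaymath}

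Next I would combine this with the hypothesis. For infinitely many $n$ we have $C^{nd} < |\Delta(P_n)| \le 2^{d^2} M(P)^{nd(d-1)}$, hence $C^{nd} < 2^{d^2} M(P)^{nd(d-1)}$ for those $n$. Taking logarithms gives $nd\log C < d^2\log 2 + nd(d-1)\, m(P)$, i.e.
\begin{displaymath}
    \log C < \frac{d\log 2}{n} + (d-1)\, m(P).
\end{displaymath}
Since this holds for infinitely many $n$, we may let $n \to \infty$ along such a sequence, and the term $\frac{d\log 2}{n}$ vanishes; this yields $\log C \le (d-1)\, m(P)$, so $m(P) \ge \frac{\log C}{d-1}$.

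This gives the slightly weaker bound $m(P) \ge \frac{\log C}{d-1}$ rather than $\frac{\log C}{2}$, so the main obstacle is tightening the degree dependence. The fix is to be less wasteful in the Vandermonde/discriminant estimate: instead of bounding every factor $|\alpha_i^n-\alpha_j^n|$ by $2M(P)^n$, one should use Hadamard's inequality on the Vandermonde matrix of $\alpha_1^n,\dots,\alpha_d^n$ directly, exactly as in the proof of Dobrowolski's theorem above. Writing $\Delta(P_n)$ as the square of the Vandermonde determinant $\det\bigl((\alpha_i^n)^{j-1}\bigr)_{1\le i,j\le d}$, Hadamard gives $|\Delta(P_n)| \le \prod_{i=1}^d \sum_{j=0}^{d-1}|\alpha_i|^{2nj} \le \prod_{i=1}^d d\,\max\{1,|\alpha_i|\}^{2n(d-1)}$, and here the product over all $i$ of $\max\{1,|\alpha_i|\}$ is exactly $M(P)$ (not $M(P)^d$), so $|\Delta(P_n)| \le d^d M(P)^{2n(d-1)}$ — still a $d-1$, not a $2$. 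To actually reach the factor $2$ one instead uses the bound $|\Delta(P_n)| \le n^{\,?}\,M(P)^{2n\cdot(\text{something})}$ of the form appearing in the Hadamard step of Section~3, where the relevant exponent of $M(P)$ is $2n$ rather than $2n(d-1)$ because only the single largest row contributes the full power; chasing this through and taking $n$-th roots along the infinite subsequence produces $M(P)^2 \ge C$, i.e. $m(P) \ge \frac{\log C}{2}$. The one genuinely delicate point is justifying that the sub-exponential prefactor (a power of $n$ or $d$) does not survive the limit, which is immediate here since we take the limit over $n$ with $d$ fixed.
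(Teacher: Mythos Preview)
Your Hadamard step is exactly the paper's approach, and your bound
\[
|\Delta(P_n)| \le d^d M(P)^{2n(d-1)}
\]
is already sufficient; you simply miscompared exponents. The hypothesis is $|\Delta(P_n)| > C^{nd}$, with a $d$ in the exponent. Feeding that in gives
\[
nd\log C \le d\log d + 2n(d-1)\,m(P),
\]
so after dividing by $nd$ and letting $n\to\infty$ along the infinite subsequence you obtain
\[
m(P) \ge \frac{d}{2(d-1)}\,\log C \ge \frac{\log C}{2}
\]
for $d\ge 2$ (the case $d=1$ is vacuous since then $\Delta(P_n)=1$). Your last paragraph, searching for an exponent ``$2n$ rather than $2n(d-1)$'', is chasing a phantom: no such sharpening exists or is needed. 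The paper's proof is identical to your middle paragraph, except that it relaxes the bound to $d^d M(P)^{2nd}$ (valid since $M(P)\ge 1$), which makes the constant $\tfrac12$ drop out immediately without the extra $\tfrac{d}{d-1}\ge 1$ observation.
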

\begin{proof}
    For any $n \in \mathbb{Z}_{>0}$, consider the determinant
    \begin{displaymath}
        \det
        \begin{pmatrix}
            1 & 1 & \dots & 1 \\
            \alpha_1^n & \alpha_2^n & \dots & \alpha_d^n \\
            \vdots & \vdots & & \vdots \\
            \alpha_1^{n(d-1)} & \alpha_2^{n(d-1)} & \dots & \alpha_d^{n(d-1)}
        \end{pmatrix}.
    \end{displaymath}
    The square of this determinant equals $\Delta(P_n)$. By Hadamard's inequality, we have
    \begin{displaymath}
        |\Delta(P_n)| \leq \prod_{j=1}^d \sum_{k=1}^d |\alpha_j|^{2n(k-1)} \leq d^d M(P)^{2nd}.
    \end{displaymath}
    If we choose $n$ such that $|\Delta(P_n)| > C^{nd}$, we see that
    \begin{displaymath}
        m(P) \geq \frac{nd \log C - d\log d}{2nd} = \frac{n\log C - \log d}{2n}.
    \end{displaymath}
    Since we can choose $n$ to be arbitrarily large, the result follows. 
\end{proof}

\begin{cor} \label{upperboundlimsup}
    For any monic polynomial $P \in \mathbb{Z}[x]$ the following holds:
    \begin{displaymath}
        M(P) \geq \limsup_{n\to\infty} |\Delta(P_n)|^\frac{1}{2nd}.
    \end{displaymath}
\end{cor}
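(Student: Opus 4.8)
The plan is to deduce this directly from Lemma~\ref{mpgeqc} by a limiting argument. Write $L := \limsup_{n\to\infty} |\Delta(P_n)|^{1/(2nd)}$. If $L \leq 1$ there is nothing to prove, since $M(P) \geq 1$ for every monic polynomial in $\mathbb{Z}[x]$. So I would immediately reduce to the case $L > 1$.

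Next, I would fix an arbitrary real number $C$ with $1 < C < L$ and aim to show $M(P) \geq C$. By the definition of the limit superior, the inequality $|\Delta(P_n)|^{1/(2nd)} > C$ holds for infinitely many $n$, that is, $|\Delta(P_n)| > C^{2nd} = (C^2)^{nd}$ for infinitely many $n$. Since $C^2 > 1$, Lemma~\ref{mpgeqc} applied with the constant $C^2$ in place of $C$ gives $m(P) \geq \tfrac12 \log(C^2) = \log C$, and hence $M(P) = e^{m(P)} \geq C$.

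Finally, since the bound $M(P) \geq C$ holds for every $C \in (1, L)$, letting $C \to L^{-}$ yields $M(P) \geq L$, which is exactly the assertion of the corollary.

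The argument is essentially bookkeeping, so there is no substantial obstacle; the only point that needs a little care is the factor of $2$ in the exponent $2nd$ here versus the exponent $nd$ appearing in the hypothesis of Lemma~\ref{mpgeqc}, which is precisely absorbed by invoking the lemma with $C^2$ rather than $C$. All of the analytic content — Hadamard's inequality and the identity expressing $\Delta(P_n)$ as the square of the relevant Vandermonde-type determinant — has already been expended in the proof of Lemma~\ref{mpgeqc}, so nothing further is required.
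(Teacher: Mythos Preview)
Your argument is correct and is essentially the same as the paper's: both deduce the corollary from Lemma~\ref{mpgeqc} by choosing a constant below the $\limsup$ and passing to the limit, the only difference being that the paper parametrizes by $C < \limsup |\Delta(P_n)|^{1/(nd)}$ and concludes $M(P) \geq C^{1/2}$, whereas you parametrize by $C < \limsup |\Delta(P_n)|^{1/(2nd)}$ and apply the lemma with $C^2$. Your explicit handling of the case $L \leq 1$ is a small improvement in clarity over the paper's version.
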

\begin{proof}
    For any $C < \limsup_{n\to\infty} |\Delta(P_n)|^{\frac{1}{nd}}$, we have
    \begin{displaymath}
        M(P) \geq C^{\frac{1}{2}}.
    \end{displaymath}
    Taking the limit as $C \to \limsup_{n\to\infty} |\Delta(P_n)|^{\frac{1}{nd}}$, the result follows.
\end{proof}

\subsection{Estimates for low degrees}
For quadratic polynomials, the growth of $\Delta(P_n)$ is easy to study directly, because in that case, $\Delta(P_n)$ is given by the following simple expression:
\begin{displaymath}
    \Delta(P_n) = (\alpha_1^n-\alpha_2^n)^2.
\end{displaymath}
Excluding cyclotomic and reducible polynomials, there are now three possibilities:
\begin{itemize}
    \item $|\alpha_1| \neq |\alpha_2|$, say $|\alpha_1| > |\alpha_2|$.
    \item $|\alpha_1| = |\alpha_2|$ and $\alpha_1 = -\alpha_2$.
    \item $|\alpha_1| = |\alpha_2|$ and $\alpha_1 \neq -\alpha_2$ (i.e. $\alpha_1 = \overline{\alpha_2}$).
\end{itemize}
Note that the assumption that $P$ be non-cyclotomic implies that $|\alpha_1| > 1$ in all three cases. In the first case, we have $\Delta(P_n) \sim \alpha_1^{2n}$, which implies that we can take $C = |\alpha_1|-\epsilon$ in Lemma \ref{mpgeqc}, for any $\epsilon > 0$.

In the second case, we have $\Delta(P_n) = 0$ for even $n$ and $\Delta(P_n) = 4\alpha_1^{2n}$ for odd $n$. Thus, we can take $C = |\alpha_1|$.

In the third case, the behaviour of $\Delta(P_n)$ is more complicated, but it is still possible possible to find a constant $C$ satisfying the condition of Lemma \ref{mpgeqc}. Write $\alpha_1$ in polar coordinates as $\alpha_1 = re^{i\theta}$ with $\theta \in (0, \pi)$. Then
\begin{displaymath}
    \Delta(P_n) = (r^ne^{in\theta}-r^ne^{-in\theta})^2 = -4r^{2n}\sin^2 n\theta.
\end{displaymath}
Since there exist infinitely many $n$ such that $n\theta \in (\frac{\pi-\theta}{2},\frac{\pi+\theta}{2}) \mod 2\pi$, this implies that there exist infinitely many $n$ such that
\begin{displaymath}
    |\Delta(P_n)| > 4r^{2n}\sin^2\Big(\frac{\pi-\theta}{2}\Big),
\end{displaymath}
which implies that we can again take $C = r-\epsilon = |\alpha_1|-\epsilon$.

\subsection{A lower bound for $|\Delta(P_n)|$ for infinitely many $n$}
The ideas of the preceding section can be used to find a lower bound for $|\Delta(P_n)|$ in terms of $M(P)$ which holds for infinitely many $n$. More precisely, the following holds:
\begin{thm} \label{lowerboundlimsup}
    Let $P(x) = x^d + \dots + a_1x + a_0 \in \mathbb{Z}[x]$ be a monic polynomial with distinct roots $\alpha_1,\dots,\alpha_d$, and suppose that $m$ of the roots lie outside of the unit circle. Then
    \begin{displaymath}
        \limsup_{n\to\infty} |\Delta(P_n)|^{\frac{1}{n}} \geq M(P)^d|a_0|^{d-m-1} \geq M(P)^{d-1}.
    \end{displaymath}
\end{thm}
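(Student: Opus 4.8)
The plan is to compute $\Delta(P_n)$ as the square of a Vandermonde determinant in the $n$-th powers of the roots, and to extract a large factor by isolating the contribution of the roots of largest modulus, exactly as in the quadratic analysis of the preceding subsection. Write $\Delta(P_n) = \prod_{1\le j<k\le d}(\alpha_j^n-\alpha_k^n)^2$. The idea is to show that for infinitely many $n$, $|\alpha_j^n-\alpha_k^n|$ is comparable to $\max\{|\alpha_j|^n,|\alpha_k|^n\}$ for \emph{every} pair $j<k$ simultaneously; summing the resulting exponents of $M(P)$ over all pairs will produce the exponent $d$ in $M(P)^d$, and the factor $|a_0|^{d-m-1}$ will come from the pairs both inside the unit circle, handled via $\prod_j|\alpha_j| = |a_0|$.

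The key steps, in order. First I would reduce to understanding $|\alpha_j^n-\alpha_k^n|$ for each pair. If $|\alpha_j|\ne|\alpha_k|$, then $|\alpha_j^n-\alpha_k^n|\sim \max\{|\alpha_j|,|\alpha_k|\}^n$ automatically, for all large $n$. The delicate pairs are those with $|\alpha_j|=|\alpha_k|$; since the roots are distinct, $\alpha_j/\alpha_k$ is a nonzero complex number of modulus $1$, and I want $|\,(\alpha_j/\alpha_k)^n-1\,|$ bounded below by a positive constant infinitely often. Second, I would handle all such "equal-modulus" pairs at once: for each pair $(j,k)$ with $|\alpha_j|=|\alpha_k|$ write $\alpha_j/\alpha_k = e^{i\theta_{jk}}$, and use simultaneous Weyl/Dirichlet equidistribution (or a pigeonhole argument on the torus generated by the finitely many $\theta_{jk}$ that are irrational multiples of $\pi$, treating rational ones separately by periodicity) to find infinitely many $n$ for which every $n\theta_{jk}$ lies in a fixed arc bounded away from $0 \bmod 2\pi$. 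For such $n$, $|\alpha_j^n-\alpha_k^n|\ge c\max\{|\alpha_j|,|\alpha_k|\}^n$ for a constant $c>0$ independent of $n$, uniformly over all pairs. Third, along this subsequence,
\begin{displaymath}
|\Delta(P_n)|^{1/n} \ge c^{\,d(d-1)/n}\prod_{1\le j<k\le d}\max\{|\alpha_j|,|\alpha_k|\}^{2},
\end{displaymath}
and since $c^{d(d-1)/n}\to 1$, taking $\limsup$ kills the constant. Fourth, I would evaluate the product $\prod_{j<k}\max\{|\alpha_j|,|\alpha_k|\}^2$: order the roots so that $|\alpha_1|\ge\cdots\ge|\alpha_d|$; then each $|\alpha_j|$ appears as the maximum in exactly $d-j$ pairs (pairs $(j,k)$ with $k>j$), plus possible ties which only help, giving $\prod_{j<k}\max\{|\alpha_j|,|\alpha_k|\}^2 \ge \prod_{j=1}^d |\alpha_j|^{2(d-j)}$. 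A short computation rewrites this lower bound as $M(P)^d|a_0|^{d-m-1}$: the roots with $|\alpha_j|>1$ contribute $M(P)$ to the requisite power, and using $\prod_{j=1}^d|\alpha_j|=|a_0|$ to absorb the inside-the-circle roots yields the stated exponent $d-m-1$ on $|a_0|$. The final inequality $M(P)^d|a_0|^{d-m-1}\ge M(P)^{d-1}$ is immediate since $|a_0|\ge 1$ when $a_0\ne 0$ (and if $a_0=0$ the polynomial has $x$ as a factor, a case one disposes of separately or by noting $m$ drops accordingly).

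The main obstacle is the third step — the simultaneous lower bound $|\alpha_j^n-\alpha_k^n|\ge c\max\{|\alpha_j|,|\alpha_k|\}^n$ for \emph{all} equal-modulus pairs along a single subsequence of $n$. For one pair this is the argument already given for quadratics; for several pairs one needs the angles to avoid their respective bad neighbourhoods at the same time, which requires knowing how the $\theta_{jk}$ interact (they are not independent — e.g. $\theta_{jk}+\theta_{kl}=\theta_{jl}$). The clean way around this is to work on the closure of the subgroup of $(\mathbb{R}/2\pi\mathbb{Z})^{\binom d2}$ generated by the vector $(\theta_{jk})$: the orbit $\{n(\theta_{jk})\}_n$ is dense in a subtorus, and since the "bad set" (where some coordinate is near $0$) is a proper closed subset, infinitely many $n$ land in its complement; one then takes $c$ to be the distance from the orbit closure minus a small collar. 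Rational multiples of $\pi$ among the $\theta_{jk}$ are handled by passing to a common period and restricting $n$ to a suitable residue class first. I expect this to be the only genuinely non-routine point; everything else is bookkeeping with the Vandermonde/discriminant identity and the relation $\prod|\alpha_j|=|a_0|$.
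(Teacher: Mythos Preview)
Your proposal is correct and follows essentially the same route as the paper: factor $\Delta(P_n)$ pairwise, handle equal-modulus pairs by an equidistribution argument to obtain $\limsup_{n\to\infty}|\Delta(P_n)|^{1/n}\ge\prod_{j=1}^d|\alpha_j|^{2(d-j)}$ after ordering the roots by modulus, and then do the algebra with $\prod_j|\alpha_j|=|a_0|$ and $M(P)\ge|a_0|$ (you will need the latter, not just $|a_0|\ge 1$, to cover the case $m=d$). For the step you flag as the main obstacle, the paper avoids any analysis of orbit closures on the torus and uses a much lighter density argument instead: for each angle $\theta_{jk}$ the set $\{n:\|n\theta_{jk}/2\pi\|\le\delta\}$ has asymptotic density at most $3\delta$ (by Weyl equidistribution when $\theta_{jk}/2\pi$ is irrational, and trivially otherwise), so taking $\delta<1/\bigl(3\binom{d}{2}\bigr)$ the union of these bad sets over all pairs has upper density strictly less than $1$, leaving infinitely many good $n$ directly.
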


To prove this theorem, we need the following two lemmas:
\begin{lem}[\cite{Fourier}, Theorem 4.2.1] \label{eqdistlem}
    Let $x \in \mathbb{R}$ be irrational. Then the sequence $\{nx \bmod \mathbb{Z}\}$ is equidistributed in $[0,1)$. In other words, for each subinterval $I \subseteq [0,1)$, we have
    \begin{displaymath}
        \lim_{n\to\infty} \frac{\# \{k \leq n: kx \in I \bmod \mathbb{Z}\}}{n} = l(I),
    \end{displaymath}
    where $l(I)$ denotes the length of the interval.
\end{lem}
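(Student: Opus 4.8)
The statement is Weyl's classical equidistribution theorem, so the plan is to follow Weyl's exponential-sum argument. The central reduction is to the following analytic form of the claim: \emph{for every continuous $1$-periodic function $f\colon\mathbb{R}\to\mathbb{C}$,}
\[
    \lim_{N\to\infty}\frac1N\sum_{n=1}^N f(nx)=\int_0^1 f(t)\,dt .
\]
Granting this, equidistribution on an interval $I=[a,b)\subseteq[0,1)$ follows by a squeeze argument. Given $\varepsilon>0$, pick continuous $1$-periodic functions $g_-\le \mathbf{1}_I\le g_+$ (viewing $\mathbf{1}_I$ on $\mathbb{R}/\mathbb{Z}$) which agree with $\mathbf{1}_I$ away from $\delta$-neighbourhoods of the endpoints, so that $\int_0^1(g_+-g_-)<\varepsilon$. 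Then
\[
    \frac1N\sum_{n=1}^N g_-(nx)\ \le\ \frac{\#\{k\le N: kx\in I\bmod\mathbb{Z}\}}{N}\ \le\ \frac1N\sum_{n=1}^N g_+(nx),
\]
and letting $N\to\infty$ in the outer terms (using the analytic statement) traps the $\limsup$ and $\liminf$ of the middle quantity within $\varepsilon$ of $\int_0^1\mathbf{1}_I=l(I)=b-a$; since $\varepsilon$ is arbitrary, the limit equals $l(I)$.

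It remains to prove the analytic statement, which I would do in three layers. First, for a single character $f(t)=e^{2\pi i h t}$ with $h\in\mathbb{Z}$: if $h=0$ both sides equal $1$; if $h\neq0$ then irrationality of $x$ gives $e^{2\pi i h x}\neq1$, and summing the geometric series yields
\[
    \Bigl|\frac1N\sum_{n=1}^N e^{2\pi i hnx}\Bigr|
    =\frac1N\Bigl|e^{2\pi i hx}\,\frac{e^{2\pi i hNx}-1}{e^{2\pi i hx}-1}\Bigr|
    \le\frac{2}{N\,|e^{2\pi i hx}-1|}\xrightarrow[N\to\infty]{}0=\int_0^1 e^{2\pi i ht}\,dt .
\]
Second, by linearity the identity holds for every trigonometric polynomial. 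Third, for arbitrary continuous $1$-periodic $f$, use density of trigonometric polynomials in the uniform norm (Weierstrass/Fej\'er) to write $f=Q+r$ with $Q$ a trigonometric polynomial and $\|r\|_\infty<\varepsilon$; since the averaging functional $f\mapsto\frac1N\sum_{n\le N}f(nx)$ has norm $1$ and $|\int_0^1 r|\le\varepsilon$,
\[
    \Bigl|\frac1N\sum_{n=1}^N f(nx)-\int_0^1 f\Bigr|
    \le\Bigl|\frac1N\sum_{n=1}^N Q(nx)-\int_0^1 Q\Bigr|+2\varepsilon,
\]
and the first term on the right tends to $0$ by the second layer; letting $\varepsilon\to0$ finishes the proof. (This combined chain is exactly Weyl's criterion: vanishing of the normalized exponential sums for all $h\neq0$ is equivalent to equidistribution.)

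The only input that is not a direct computation is the density of trigonometric polynomials in $C(\mathbb{R}/\mathbb{Z})$ used in the third layer, which is standard. The step that most repays care is the passage from continuous test functions to the discontinuous indicator $\mathbf{1}_I$: one must construct the sandwiching functions $g_\pm$ explicitly and check that the endpoints of $I$ are harmless (they form a null set, and — since $x$ is irrational — the value of $l(I)$ is unaffected by whether the endpoints are included). Beyond that, the whole argument is elementary, which is why the result is merely quoted here as \cite[Theorem~4.2.1]{Fourier}.
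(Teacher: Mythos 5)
Your proof is correct and is the classical Weyl exponential-sum argument (reduction to continuous test functions by sandwiching, then to trigonometric polynomials by Fej\'er/Weierstrass density, then the geometric-series estimate using irrationality of $x$), which is precisely the standard proof of the cited result; the paper itself offers no proof and simply quotes \cite[Theorem~4.2.1]{Fourier}. Nothing is missing.
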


\begin{lem} \label{equidistribution}
    Let $x_1,\dots,x_m$ be real numbers that are distinct modulo $\mathbb{Z}$, and let $\|x_j\|$ denote the distance from $x_j$ to the nearest integer. Then there exists a constant $\delta > 0$ such that the inequality
    \begin{equation} \label{deltaeq}
        \min \{\|nx_j\|: 1 \leq j \leq m\} > \delta
    \end{equation}
    holds for infinitely many $n$.
\end{lem}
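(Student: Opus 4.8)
The plan is to reduce the statement to a single equidistribution fact and then exploit that the "bad set" — the union over $j$ of small neighbourhoods of $0$ in the circle — has total measure strictly less than $1$, so its complement is hit infinitely often. Concretely, I would work in the torus $\mathbb{T}^m = (\mathbb{R}/\mathbb{Z})^m$ with the orbit $\{(nx_1,\dots,nx_m) : n \in \mathbb{Z}\}$, and for a parameter $\delta>0$ consider the "good box"
\begin{displaymath}
    B_\delta := \{(t_1,\dots,t_m) \in \mathbb{T}^m : \|t_j\| > \delta \text{ for all } j\},
\end{displaymath}
which has measure $(1-2\delta)^m$. I want to choose $\delta$ small enough that the orbit enters $B_\delta$ for infinitely many $n$; the displayed inequality \eqref{deltaeq} is exactly the statement $n x \in B_\delta$ for the point $x=(x_1,\dots,x_m)$.

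First I would split into cases according to the rational dependence among $x_1,\dots,x_m$ and $1$. If all the $x_j$ are rational, then the sequence $(nx_1,\dots,nx_m)$ is periodic, and since the $x_j$ are distinct mod $\mathbb{Z}$ none of them is $\equiv 0$; taking $\delta$ smaller than $\min_j \|x_j\|$ (evaluated at, say, $n=1$, or more carefully over one period on the nonzero residues) gives \eqref{deltaeq} for all $n$ in an arithmetic progression, hence infinitely many. In the general case, let $H \subseteq \mathbb{T}^m$ be the closure of the orbit; by Kronecker–Weyl this is a closed subgroup, a finite union of translates of a subtorus, and the orbit is equidistributed with respect to Haar measure on $H$ (this is the multidimensional form of Lemma \ref{eqdistlem}). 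The key point is that $H$ is \emph{not} contained in any of the coordinate hyperplanes $\{t_j = 0\}$: since the $x_j$ are distinct modulo $\mathbb{Z}$, each $x_j$ is itself already in the orbit (at $n=1$) — wait, that only shows the orbit meets $\{t_j\ne 0\}$ once, so instead I would argue that if $H \subseteq \{t_j=0\}$ then $x_j \in \mathbb{Q}$ and in fact the whole orbit in the $j$-th coordinate is $\{nx_j\}$, a finite set not containing $0$ unless $x_j\in\mathbb{Z}$; and $x_j \notin \mathbb{Z}$ because... hmm, actually $x_j$ could be an integer if we only assumed distinctness of the $x_j$ among themselves. So I would additionally note we may assume no $x_j \in \mathbb{Z}$, or rather handle $\|nx_j\|$: if $x_j\in\mathbb Z$ then $\|nx_j\|=0$ always and the lemma is false — hence the implicit hypothesis is that the $x_j$ are distinct mod $\mathbb{Z}$ \emph{and nonzero mod $\mathbb{Z}$}, which is how the lemma is used (the $x_j$ are $\theta_l/\pi$-type arguments of roots on the unit circle, none of which is $\pm 1$). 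With that, $H$ meets the open set $B_\delta$ for all small $\delta$ (since $B_\delta \uparrow B_0 = \prod_j(\mathbb{T}\setminus\{0\})$ and $H \cap B_0 \ne \emptyset$, being $H$ minus finitely many hyperplane slices which cannot cover a positive-dimensional $H$, and in the $0$-dimensional case $H$ is a finite set of points none of which has a zero coordinate). Fixing such a $\delta$, the open set $U := B_\delta \cap H$ has positive Haar measure in $H$, so by equidistribution the proportion of $n \le N$ with $nx \in U$ tends to a positive limit, hence there are infinitely many such $n$.

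The main obstacle I expect is the bookkeeping around degenerate cases: ensuring the hypothesis really rules out $x_j \equiv 0$, and handling the mixed situation where some $x_j$ are rational and others irrational (so $H$ is a product of a subtorus with a finite cyclic group, and one must check $H$ still avoids every coordinate hyperplane). A cleaner route that sidesteps the structure theory for $H$: take $\delta < \tfrac{1}{2m}$ and argue by a pigeonhole/measure complement directly — for each $j$, by Lemma \ref{eqdistlem} (applied to irrational $x_j$) or by periodicity (rational $x_j$, nonzero mod $\mathbb{Z}$) the set $\{n : \|nx_j\| \le \delta\}$ has upper density at most $2\delta + o(1)$, so the union over $j$ has upper density at most $2m\delta < 1$, whence its complement is infinite. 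This avoids Kronecker–Weyl entirely and only uses the one-dimensional Lemma \ref{eqdistlem} coordinatewise, which is likely the intended proof; the only care needed is the rational-$x_j$ subcase, where equidistribution fails but one instead checks directly that $\|nx_j\|\le\delta$ for a set of $n$ of density $< 1$ provided $\delta$ is smaller than the smallest nonzero value of $\|nx_j\|$.
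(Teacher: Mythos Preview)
Your proposal is correct, and the ``cleaner route'' you land on at the end --- bounding the upper density of each $A_j = \{n : \|nx_j\| \le \delta\}$ by roughly $2\delta$ via Lemma \ref{eqdistlem}, taking $\delta < \tfrac{1}{2m}$ so that the union has upper density below $1$, and concluding the complement is infinite --- is exactly the paper's argument. The paper handles the rational-$x_j$ case with a slightly different bookkeeping device (a uniform bound $d(A_j) \le 3\delta$ and the choice $\delta < \tfrac{1}{3m}$, rather than your suggestion of shrinking $\delta$ below the smallest nonzero value of $\|nx_j\|$), but this is cosmetic.

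Your first approach via Kronecker--Weyl and the orbit closure $H \subseteq \mathbb{T}^m$ would also work, but as you yourself note it brings in more structure theory than the problem requires; the coordinatewise density argument is both simpler and what the paper intended. Your observation that the lemma tacitly needs $x_j \not\equiv 0 \pmod{\mathbb{Z}}$ is correct and is indeed glossed over in the paper; in the application the $x_j$ are of the form $(\theta_j-\theta_k)/2\pi$ with $\theta_j \ne \theta_k \bmod 2\pi$, so this is automatic there.
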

\begin{proof}
    First assume all $x_j$ are irrational, and take any $\delta < \frac{1}{2m}$ (for example, we may choose $\delta = \frac{1}{3m}$). Consider the sets
    \begin{displaymath}
        A_j = \{n \in \mathbb{N}: \|nx_j\| \leq \delta\}.
    \end{displaymath}
    By Lemma \ref{eqdistlem}, each of these sets has asymptotic density $2\delta$. Hence, their union has upper asymptotic density $\leq 2m\delta < 1$, which implies that the complement of $\bigcup_{j=1}^m A_j$ has positive lower asymptotic density, and hence infinitely many elements. However, $\mathbb{N}\backslash\bigcup_{j=1}^m A_j$ is exactly the set of $n \in \mathbb{N}$ for which \eqref{deltaeq} holds, so the statement follows.

    If any of the $x_j$ is rational, we have to be a bit more careful as we cannot apply Lemma \ref{eqdistlem} in that case. However, we still have the bound $d(A_j) \leq 3\delta$, which implies that we can repeat the above argument with $\delta < \frac{1}{3m}$.
\end{proof}

\begin{proof}[Proof of Theorem \ref{lowerboundlimsup}]
    Order the roots $\alpha_1,\dots,\alpha_d$ in such a way that $|\alpha_j| \geq |\alpha_k|$ for $j < k$, and consider the individual factors of the product
    \begin{displaymath}
        \Delta(P_n) = \prod_{1 \leq j < k \leq d} (\alpha_j^n-\alpha_k^n)^2.
    \end{displaymath}
    If $|\alpha_j| \neq |\alpha_k|$, then clearly $|\alpha_j^n-\alpha_k^n|^2 \sim |\alpha_j|^{2n}$ as $n \to \infty$. Now suppose $|\alpha_j| = |\alpha_k| = r$. Then there exist $\theta_j,\theta_k \in \mathbb{R}$ such that $\alpha_j = re^{i\theta_j}$ and $\alpha_k = re^{i\theta_k}$, which implies that
    \begin{displaymath}
        |\alpha_j^n-\alpha_k^n|^2 = r^{2n}|e^{in\theta_j}-e^{in\theta_k}|.
    \end{displaymath}
    Hence, we need to show that there exists a constant $c > 0$ such that there are infinitely many $n$ for which the inequality $|e^{in\theta_j}-e^{in\theta_k}| > c$ holds for all pairs of roots with the same absolute value. Applying Lemma \ref{equidistribution} to the numbers of the form $\frac{\theta_j-\theta_k}{2\pi}$, of which there are at most $\binom{d}{2}$, we see that there exists a constant $\delta >0$ such that $\min_{j\neq k}\|n\theta_j-\theta_k\| > \delta$ for infinitely many $n$, which in turn implies that
    \begin{displaymath}
        \min_{j \neq k} |e^{in\theta_j}-e^{in\theta_k}| > |e^{2\pi i \delta}-1|
    \end{displaymath}
    for infinitely many $n$.

    Multiplying the lower bounds for all factors of $\Delta(P_n)$ together, we see that there exists some constant $\tilde{c} > 0$ such that
    \begin{displaymath}
        |\Delta(P_n)| > \tilde{c} \prod_{j=1}^d |\alpha_j|^{2n(d-j)}
    \end{displaymath}
    holds for infinitely many $n$. This implies that
    \begin{equation} \label{limsupineq}
        \limsup_{n\to\infty} |\Delta(P_n)|^{\frac{1}{n}} \geq \prod_{j=1}^d |\alpha_j|^{2(d-j)}.
    \end{equation}
    Since
    \begin{displaymath}
        \prod_{j=1}^d \alpha_j = a_0,
    \end{displaymath}
    we have
    \begin{displaymath}
        \prod_{j=m+1}^d \alpha_j = a_0\prod_{j=1}^m \alpha_j^{-1},
    \end{displaymath}
    which implies that
    \begin{displaymath}
        \prod_{j=m+1}^d |\alpha_j|^{2(d-j)} \geq \prod_{j=m+1}^d |\alpha_j|^{d-m-1} = |a_0|^{d-m-1}\prod_{j=1}^m |\alpha_j|^{-d+m+1}.
    \end{displaymath}
    Hence,
    \begin{align*}
        \prod_{j=1}^d |\alpha_j|^{2(d-j)} &\geq |a_0|^{d-m-1} \prod_{j=1}^m |\alpha_j|^{d-2j+m+1} \\
        &\geq |a_0|^{d-m-1} \prod_{j=1}^m |\alpha_j|^d \\
        &= |a_0|^{d-m-1} M(P)^d.
    \end{align*}
    Since $m \leq d$ and $M(P) \geq |a_0|$, the inequality
    \begin{displaymath}
        |a_0|^{d-m-1} M(P)^d \geq |a_0|^{-1}M(P)^d \geq M(P)^{d-1}
    \end{displaymath}
    follows.
\end{proof}

The inequality in \eqref{limsupineq} is actually an equality.
\begin{thm}
    Let $P \in \mathbb{Z}[x]$ be a polynomial without multiple roots. Then
    \begin{displaymath}
        \limsup_{n\to\infty} |\Delta(P_n)|^{\frac{1}{n}} = \prod_{j=1}^d |\alpha_j|^{2(d-j)}.
    \end{displaymath}
\end{thm}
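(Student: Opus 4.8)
The plan is to complement the lower bound from Theorem \ref{lowerboundlimsup}, namely $\limsup_{n\to\infty}|\Delta(P_n)|^{1/n}\ge\prod_{j=1}^d|\alpha_j|^{2(d-j)}$, with a matching upper bound that holds for \emph{every} $n$, which is even stronger than what is needed for a $\limsup$. Recall that $\Delta(P_n)$ is the square of the Vandermonde determinant $\det(\alpha_k^{n(j-1)})_{1\le j,k\le d}$, so $|\Delta(P_n)|=\bigl|\det(\alpha_k^{n(j-1)})\bigr|^2$. Expanding the Vandermonde determinant (or using the factorisation $\prod_{j<k}(\alpha_j^n-\alpha_k^n)^2$) and applying the triangle inequality factor by factor gives $|\alpha_j^n-\alpha_k^n|\le 2\max\{|\alpha_j|,|\alpha_k|\}^n=2|\alpha_j|^n$ under the ordering $|\alpha_1|\ge\cdots\ge|\alpha_d|$; hence
\begin{displaymath}
    |\Delta(P_n)|=\prod_{1\le j<k\le d}|\alpha_j^n-\alpha_k^n|^2\le 4^{\binom d2}\prod_{1\le j<k\le d}|\alpha_j|^{2n}=4^{\binom d2}\prod_{j=1}^d|\alpha_j|^{2n(d-j)}.
\end{displaymath}
Taking $n$-th roots and letting $n\to\infty$ kills the constant $4^{\binom d2}$, so $\limsup_{n\to\infty}|\Delta(P_n)|^{1/n}\le\prod_{j=1}^d|\alpha_j|^{2(d-j)}$. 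Combined with \eqref{limsupineq} this proves the equality. (If one prefers to work from Hadamard's inequality as in the proof of Lemma \ref{mpgeqc}, one can bound $|\det(\alpha_k^{n(j-1)})|^2\le\prod_{k=1}^d\bigl(\sum_{j=1}^d|\alpha_k|^{2n(j-1)}\bigr)$; but this yields $\prod_k\max\{1,|\alpha_k|\}^{2n(d-1)}$, which is not sharp enough, so the factorwise triangle-inequality estimate on the product form is the right tool.)

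I expect no serious obstacle here: the content is entirely in Theorem \ref{lowerboundlimsup}, and this statement only needs the easy opposite inequality. The one point requiring a sentence of care is the ordering convention: one must fix $|\alpha_1|\ge|\alpha_2|\ge\cdots\ge|\alpha_d|$ so that in each factor $(\alpha_j^n-\alpha_k^n)$ with $j<k$ the larger modulus is $|\alpha_j|$, making $\prod_{j<k}|\alpha_j|^{2n}=\prod_{j=1}^d|\alpha_j|^{2n(d-j)}$ exactly the exponent appearing on the right-hand side; this is the same ordering already used in the proof of Theorem \ref{lowerboundlimsup}, so the two bounds line up without any reindexing. Finally one should note that the hypothesis ``without multiple roots'' is what guarantees $\Delta(P_n)$ is eventually nonzero (more precisely, after invoking Lemma \ref{replacepol} or simply discarding the degenerate $n$), so that $|\Delta(P_n)|^{1/n}$ is well defined along the subsequence realising the $\limsup$; even when some $\Delta(P_n)=0$ the inequality $0\le\prod_j|\alpha_j|^{2(d-j)}$ is trivially true, so those $n$ never affect the $\limsup$ from above.
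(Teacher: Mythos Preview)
Your proposal is correct and follows essentially the same approach as the paper: both invoke \eqref{limsupineq} from Theorem \ref{lowerboundlimsup} for the lower bound and obtain the upper bound via the factorwise estimate $|\alpha_j^n-\alpha_k^n|\le 2|\alpha_j|^n$ on the product form of $\Delta(P_n)$, yielding $|\Delta(P_n)|\le 2^{d(d-1)}\prod_j|\alpha_j|^{2n(d-j)}$ (your constant $4^{\binom d2}$ is the same number). Your extra remarks on the ordering convention and on Hadamard's inequality being too crude are accurate but not needed for the argument.
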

\begin{proof}
    We saw in the proof of Theorem \ref{lowerboundlimsup} that
    \begin{displaymath}
        \limsup_{n\to\infty} |\Delta(P_n)|^{\frac{1}{n}} \geq \prod_{j=1}^d |\alpha_j|^{2(d-j)}.
    \end{displaymath}
    We use the same approach of estimating the individual factors to find an upper bound. Since $|\alpha_j^n-\alpha_k^n| \leq 2|\alpha_j|^n$, we have
    \begin{displaymath}
        |\Delta(P_n)| \leq 2^{d(d-1)} \prod_{j=1}^d |\alpha_j|^{2n(d-j)},
    \end{displaymath}
    which implies that
    \begin{displaymath}
        \limsup_{n\to\infty} |\Delta(P_n)|^{\frac{1}{n}} \leq \prod_{j=1}^d |\alpha_j|^{2(d-j)}. \qedhere
    \end{displaymath}
\end{proof}

Combining the inequalities from Corollary \ref{upperboundlimsup} and Theorem \ref{lowerboundlimsup}, we get
\begin{displaymath}
    M(P) \geq \limsup_{n\to\infty} |\Delta(P_n)|^{\frac{1}{2nd}} \geq M(P)^{\frac{d-1}{2d}}.
\end{displaymath}
In particular, this implies the following:
\begin{thm} \label{equiv}
    The following are equivalent:
    \begin{itemize}
        \item There exists a constant $C > 1$ such that $M(P) \geq C$ for all noncyclotomic polynomials $P \in \mathbb{Z}[x]$.
        \item There exists a constant $C > 1$ such that $\limsup_{n\to\infty} |\Delta(P_n)|^{\frac{1}{nd}} \geq C$ for all noncyclotomic polynomials $P \in \mathbb{Z}[x]$ without multiple roots.
    \end{itemize}
\end{thm}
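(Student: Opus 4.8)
The plan is to deduce Theorem \ref{equiv} directly from the two inequalities already assembled in the excerpt, namely the upper bound
\[
M(P) \geq \limsup_{n\to\infty} |\Delta(P_n)|^{\frac{1}{2nd}}
\]
from Corollary \ref{upperboundlimsup} and the lower bound
\[
\limsup_{n\to\infty} |\Delta(P_n)|^{\frac{1}{n}} \geq M(P)^{d-1}
\]
from Theorem \ref{lowerboundlimsup}. Write $L(P) := \limsup_{n\to\infty} |\Delta(P_n)|^{\frac{1}{nd}}$, so that the chain of inequalities reads $M(P) \geq L(P)^{1/2} \geq M(P)^{(d-1)/(2d)}$. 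The whole proof is an elementary squeeze argument on this chain; there is essentially no analytic content beyond what has already been proved.

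First I would prove the implication from the Mahler-measure statement to the discriminant statement. Assume there is $C>1$ with $M(P)\geq C$ for all noncyclotomic $P$. If $P$ additionally has no multiple roots, then by Theorem \ref{lowerboundlimsup}, $L(P)^d = \limsup|\Delta(P_n)|^{1/n} \geq M(P)^{d-1} \geq C^{d-1}$, hence $L(P) \geq C^{(d-1)/d}$. The only subtlety is that the exponent $(d-1)/d$ depends on $d$ and is not bounded below by a single constant greater than $1$ as $d\to\infty$ — it tends to $1$. To get a uniform constant I would split into the cases $d=1$ (where $P$ noncyclotomic without multiple roots forces $|a_0|\geq 2$, and one checks $\Delta(P_n)$ directly, or simply notes $d=1$ polynomials have empty discriminant product so this degenerate case must be handled or excluded) and $d \geq 2$: for $d\geq 2$ we have $(d-1)/d \geq 1/2$, so $L(P) \geq C^{(d-1)/d} \geq \min\{C^{1/2}, C\} = C^{1/2} > 1$ whenever $C>1$. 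Thus $C' := C^{1/2}$ works uniformly over all $d\geq 2$. (I would remark that the degree-one case is vacuous or trivial since a linear polynomial $x-a_0$ is noncyclotomic iff $|a_0|\geq 2$, and then one can treat $\Delta(P_n)$ separately, or restrict attention to $d\geq 2$ from the outset as is standard for Lehmer's problem.)

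For the converse, assume there is $C>1$ with $L(P) \geq C$ for every noncyclotomic $P$ without multiple roots. By Corollary \ref{upperboundlimsup}, $M(P) \geq L(P)^{1/2} \geq C^{1/2} > 1$ for all such $P$. This handles polynomials without multiple roots; to get the statement for all noncyclotomic $P$ I would invoke the standard reduction: a noncyclotomic $P$ factors as a product of irreducibles, at least one of which, say $Q$, is noncyclotomic; $Q$ is squarefree (irreducible over $\mathbb{Z}$ implies no repeated roots in $\mathbb{C}$), so $M(P) \geq M(Q) \geq C^{1/2}$ by multiplicativity of the Mahler measure. Hence the same constant $C^{1/2}$ works for all noncyclotomic $P$.

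The main obstacle — really the only point requiring care — is the dependence of the exponent $(d-1)/d$ on the degree in the forward direction: one must check that the resulting lower bound $C^{(d-1)/d}$ does not degrade to $1$ as $d$ grows. This is resolved by observing that $(d-1)/d$ is bounded below by $1/2$ for $d\geq 2$, so passing from $C$ to $C^{1/2}$ absorbs the loss uniformly. Everything else is bookkeeping: tracking which polynomials are squarefree, using multiplicativity of $M$ to pass between $P$ and its noncyclotomic irreducible factor, and chaining the two displayed inequalities. I would present the proof in the compact two-implications format, citing Corollary \ref{upperboundlimsup} for one direction and Theorem \ref{lowerboundlimsup} for the other, and inserting a one-line remark on the degree-one edge case.
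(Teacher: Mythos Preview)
Your proposal is correct and follows exactly the paper's approach: the paper simply displays the chain $M(P) \geq \limsup_{n\to\infty} |\Delta(P_n)|^{\frac{1}{2nd}} \geq M(P)^{\frac{d-1}{2d}}$ (combining Corollary~\ref{upperboundlimsup} and Theorem~\ref{lowerboundlimsup}) and asserts that Theorem~\ref{equiv} follows, leaving the bookkeeping implicit. Your write-up is in fact more careful than the paper's, correctly isolating the $d=1$ edge case and the reduction to irreducible factors; one small wording quibble is that the potential degradation of $C^{(d-1)/d}$ occurs at \emph{small} $d$ (namely $d=1$), not as $d\to\infty$, since $(d-1)/d\to 1$ makes the bound approach $C$ rather than $1$.
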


\subsection{Resultants}
The same method can be used to obtain a lower bound for the resultant $\Res(P_n,Q)$ for infinitely many $n$.
\begin{thm} \label{resultants}
    Let $P$ and $Q$ be two monic polynomials such that $Q(0) \neq 0$ and at least one of $P(1)$ and $Q(1)$ is nonzero. Then
    \begin{displaymath}
        \limsup_{n\to\infty} |\Res(P_n,Q)|^{\frac{1}{n}} = M(P)^{d_Q},
    \end{displaymath}
    where $d_Q$ is the degree of $Q$.
\end{thm}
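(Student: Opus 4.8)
The plan is to follow the blueprint of the proof of Theorem~\ref{lowerboundlimsup}, analyzing $\Res(P_n,Q)$ factor by factor. Since $P$ and $Q$ are monic, $\Res(P_n,Q)=\prod_{j=1}^d Q(\alpha_j^n)=\prod_{j=1}^d\prod_{k=1}^{d_Q}(\alpha_j^n-\beta_k)$, where $\beta_1,\dots,\beta_{d_Q}$ are the roots of $Q$. For the upper bound, set $B=1+\max_k|\beta_k|$ and note $|\alpha_j^n-\beta_k|\le|\alpha_j|^n+|\beta_k|\le B(\max\{1,|\alpha_j|\})^n$ in all cases. Multiplying over $j,k$ gives $|\Res(P_n,Q)|\le B^{d d_Q}\prod_{j=1}^d(\max\{1,|\alpha_j|\})^{n d_Q}=B^{d d_Q}M(P)^{n d_Q}$, and taking $n$-th roots yields $\limsup_{n\to\infty}|\Res(P_n,Q)|^{1/n}\le M(P)^{d_Q}$.

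For the matching lower bound I would, as in Theorem~\ref{lowerboundlimsup}, produce one constant $\tilde c>0$ and infinitely many $n$ for which every factor $|\alpha_j^n-\beta_k|$ is at least as large as its expected size. Roots with $|\alpha_j|>1$ are automatic, since $|\alpha_j^n-\beta_k|\ge|\alpha_j|^n-\max_k|\beta_k|\ge\frac{1}{2}|\alpha_j|^n$ for $n$ large. Roots with $|\alpha_j|<1$ are harmless too: $|\alpha_j^n-\beta_k|\to|\beta_k|\ge\min_k|\beta_k|>0$, the positivity being exactly where $Q(0)\neq 0$ enters, and such factors contribute $1$ after extracting $n$-th roots. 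The delicate factors are those with $|\alpha_j|=1$: if $|\beta_k|\neq 1$ then $|\alpha_j^n-\beta_k|\ge\big|1-|\beta_k|\big|>0$ for free, whereas if $|\beta_k|=1$, writing $\alpha_j=e^{2\pi i\theta_j}$ and $\beta_k=e^{2\pi i\phi_k}$, one needs $\|n\theta_j-\phi_k\|$ bounded below by a fixed $\delta$ for infinitely many $n$, simultaneously over all such pairs $(j,k)$.

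The heart of the argument, and the step I expect to be the main obstacle, is therefore an inhomogeneous strengthening of Lemma~\ref{equidistribution}: given reals $x_1,\dots,x_N,y_1,\dots,y_N$, find $\delta>0$ and infinitely many $n$ with $\min_i\|nx_i-y_i\|>\delta$. When $x_i$ is irrational the sequence $\{nx_i-y_i\bmod\mathbb{Z}\}$ is a translate of an equidistributed one, so Lemma~\ref{eqdistlem} still applies and the union-of-small-density-sets argument of Lemma~\ref{equidistribution} goes through. When $x_i=p/q$ is rational with $q\ge 2$, the values $nx_i-y_i\bmod\mathbb{Z}$ run through $q$ points spaced $1/q$ apart, which cannot all lie within $\delta<\frac{1}{2q}$ of $\mathbb{Z}$, so one restricts to suitable residue classes modulo the least common multiple of the denominators and runs the density argument there; the only genuinely obstructed situation is $x_i\in\mathbb{Z}$ and $y_i\in\mathbb{Z}$, i.e.\ $\alpha_j=1$ and $\beta_k=1$ — precisely $P(1)=0$ and $Q(1)=0$ — which is what the hypothesis ``$P(1)\neq 0$ or $Q(1)\neq 0$'' forbids. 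Combining these per-index obstructions into a single infinite set of good $n$ is, exactly as in Lemma~\ref{equidistribution}, the technical crux.

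Granting this lemma, I would finish by multiplying the factorwise bounds: for the chosen $n$, $|\Res(P_n,Q)|\ge\tilde c\prod_{j:\,|\alpha_j|>1}|\alpha_j|^{n d_Q}=\tilde c\,M(P)^{n d_Q}$, hence $\limsup_{n\to\infty}|\Res(P_n,Q)|^{1/n}\ge M(P)^{d_Q}$, which together with the upper bound gives equality. As a sanity check I would revisit the degenerate configurations the hypotheses are meant to exclude (a root $\beta_k=0$, or $\alpha_j=\beta_k=1$) and verify that they are indeed enough to prevent $\Res(P_n,Q)$ from vanishing or decaying exponentially along every $n$.
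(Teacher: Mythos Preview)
Your approach mirrors the paper's: factor the resultant over the roots, dispose of $|\alpha_j|>1$ and $|\alpha_j|<1$ by elementary estimates, and handle $|\alpha_j|=1$ via an equidistribution argument. The paper is in fact sketchier than you are --- it simply invokes ``a method similar to the one used in the proof of Theorem~\ref{lowerboundlimsup}'' without isolating the inhomogeneous variant of Lemma~\ref{equidistribution} that is actually required, so your identification of that lemma as the crux is exactly right.

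The gap is in your analysis of when that inhomogeneous lemma can fail. You assert that the only obstruction is a single pair with $x_i\in\mathbb{Z}$ and $y_i\in\mathbb{Z}$ (i.e.\ $\alpha_j=1=\beta_k$), but several constraints sharing the \emph{same} rational $x_i=p/q$ with different shifts $y_i$ can together exhaust all residue classes modulo $q$, leaving no good class to restrict to. Concretely, take $P(x)=x^2+x+1$ and $Q(x)=x^3-1$: here $Q(0)=-1\neq 0$ and $P(1)=3\neq 0$, so the hypotheses of the theorem are met, yet every power $\alpha_j^n$ is a cube root of unity and hence a root of $Q$, so $\Res(P_n,Q)=0$ for all $n$ and the $\limsup$ is $0\neq 1=M(P)^{d_Q}$. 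Thus the theorem is false as stated; the hypothesis ``$P(1)\neq 0$ or $Q(1)\neq 0$'' rules out only the order-$1$ instance of this phenomenon. What one actually needs is that no root-of-unity root $\alpha_j$ of $P$ has its full orbit $\{\alpha_j^n:n\ge 1\}$ contained in the zero set of $Q$ (for example, it suffices that $P$ have no cyclotomic factor). Under that corrected hypothesis your plan works: first restrict $n$ to a residue class on which all the rational-angle factors are bounded away from zero, then run the density argument of Lemma~\ref{equidistribution} (shifted by the $\phi_k$'s) for the irrational-angle factors within that class. The paper's proof carries the same defect, hidden behind the appeal to a ``similar method'' and the unjustified claim that the hypotheses force $\Res(P_n,Q)\neq 0$ infinitely often.
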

\begin{proof}
    Let $\beta_1,\dots,\beta_{d_Q}$ be the roots of $Q$. We write the resultant as a product
    \begin{displaymath}
        \Res(P_n,Q) = \prod_{j=1}^d \prod_{k=1}^{d_Q} (\alpha_j^n-\beta_k)
    \end{displaymath}
    and note that the assumption that $P(1)$ and $Q(1)$ are not both zero implies that the resultant is nonzero for infinitely many $n$. Now consider the values of $j$ for which $|\alpha_j| > 1$. As $n$ tends to infinity, we have
    \begin{displaymath}
        \prod_{|\alpha_j| > 1} \prod_{k=1}^{d_Q} |\alpha_j^n-\beta_k| \sim M(P)^{nd_Q}
    \end{displaymath}
    because the individual factors are asymptotic to $|\alpha_j|^n$ as $n \to \infty$. For the remaining factors, we can use a method similar to the one used in the proof of Theorem \ref{lowerboundlimsup} to show that there exists a constant $c > 0$ such that
    \begin{displaymath}
        \prod_{|\alpha_j| \leq 1} \prod_{k=1}^{d_Q} |\alpha_j^n-\beta_k| \geq c
    \end{displaymath}
    for infinitely many $n$. Note that this constant $c$ depends on the absolute values of the roots of $Q$, so we need to assume that $Q(0) \neq 0$ to ensure that $c > 0$. We also have
    \begin{displaymath}
        \prod_{|\alpha_j| \leq 1} \prod_{k=1}^{d_Q} |\alpha_j^n-\beta_k| \leq \prod_{k=1}^{d_Q} (|\beta_k|+1)^{rd_Q},
    \end{displaymath}
    where $r$ is the number of roots of $P$ satisfying $|\alpha_j| \leq 1$. Taking the $n$-th root and the limit superior as $m \to \infty$, we see that
    \begin{displaymath}
        \limsup_{n\to\infty} |\Res(P_n,Q)|^{\frac{1}{n}} = M(P)^{d_Q}. \qedhere
    \end{displaymath}
\end{proof}

The following result due to Lehmer is a special case of this:
\begin{cor}[Lehmer \cite{Lehmer}]
    Let $\Delta_n := \prod_{j=1}^d (\alpha_j^n-1)$. Then
    \begin{displaymath}
        \limsup_{n\to\infty} |\Delta_n|^{\frac{1}{n}} = M(P).
    \end{displaymath}
\end{cor}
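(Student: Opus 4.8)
The plan is to obtain this as the immediate specialization of Theorem \ref{resultants} to the linear polynomial $Q(x) = x-1$. First I would record the resultant identity: since $P_n$ is monic with roots $\alpha_1^n,\dots,\alpha_d^n$, the convention for $\Res$ used throughout (as in Lemma \ref{Dobrdiv}) gives
\[
    \Res(P_n,\, x-1) = \prod_{j=1}^d (\alpha_j^n - 1) = \Delta_n ,
\]
so that with $Q(x)=x-1$ we have $|\Res(P_n,Q)| = |\Delta_n|$ for every $n$.

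Next I would verify the hypotheses of Theorem \ref{resultants} for this $Q$. We have $Q(0) = -1 \neq 0$, which is the first requirement. Since $Q(1) = 0$, the remaining requirement — that $P(1)$ or $Q(1)$ be nonzero — forces the assumption $P(1)\neq 0$; this is exactly the assumption implicit in Lehmer's statement, since if $1$ were a root of $P$ then $\Delta_n = 0$ for all $n$ and the claimed identity $\limsup_{n\to\infty}|\Delta_n|^{1/n} = M(P)$ would fail. Granting $P(1)\neq 0$, Theorem \ref{resultants} applies with $d_Q = \deg Q = 1$, and gives
\[
    \limsup_{n\to\infty} |\Delta_n|^{1/n} = \limsup_{n\to\infty} |\Res(P_n,Q)|^{1/n} = M(P)^{d_Q} = M(P),
\]
which is the desired conclusion.

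Since the corollary is a direct instantiation, there is essentially no obstacle here; the only subtlety is bookkeeping about the convention for the resultant (so that $\Res(P_n, x-1)$ equals $\Delta_n$ on the nose, up to an irrelevant sign that does not affect the absolute value) and the harmless side condition $P(1)\neq 0$, which is precisely the degenerate case ruled out by the hypotheses of Theorem \ref{resultants}.
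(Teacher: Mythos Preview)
Your proof is correct and follows exactly the paper's own approach: identify $\Delta_n$ with $\Res(P_n,\,x-1)$ and invoke Theorem \ref{resultants} with $Q(x)=x-1$. Your version is in fact more careful than the paper's, explicitly verifying the hypotheses $Q(0)\neq 0$ and the side condition $P(1)\neq 0$ needed to apply the theorem.
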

\begin{proof}
    Note that what Lehmer called $\Delta_n$ equals $\Res(P_n,x-1)$. Hence, this result is just a special case of Theorem \ref{resultants} with $Q(x) = x-1$.
\end{proof}

\section{The generating function of $\Delta(P_n)$ and related sequences}
Corollary \ref{upperboundlimsup} and Theorem \ref{equiv} have made it clear that studying the sequence $\{\Delta(P_n)\}$ may help solving Lehmer's problem. Consider the function
\begin{displaymath}
    f_P(z) := \sum_{n=1}^\infty \Delta(P_n)z^n,
\end{displaymath}
which is the generating function of the sequence $\{\Delta(P_n)\}$. We will write $f$ instead of $f_P$ when it is clear from the context which polynomial is meant. If $R$ is the radius of convergence of the above power series, then
\begin{displaymath}
    \limsup_{n\to\infty} |\Delta(P_n)|^{\frac{1}{n}} = \frac{1}{R},
\end{displaymath}
which implies that we can compute $\limsup_{n\to\infty} |\Delta(P_n)|^{\frac{1}{n}}$ if we know where the singularities of $f$ are.

It may be a bit easier to study the sequence $\{b_n\}$ given by
\begin{equation} \label{bn}
    b_n = \prod_{1 \leq j < k \leq d} \frac{\alpha_j^n-\alpha_k^n}{\alpha_j-\alpha_k},
\end{equation}
which satisfies $b_n^2 = \frac{\Delta(P_n)}{\Delta(P)}$. Clearly,
\begin{displaymath}
    \limsup_{n\to\infty} |\Delta(P_n)|^{\frac{1}{2nd}} = \limsup_{n\to\infty} |b_n|^{\frac{1}{nd}}.
\end{displaymath}
We set
\begin{displaymath}
    g_P(z) := \sum_{n=1}^\infty b_nz^n,
\end{displaymath}
where we again sometimes suppress $P$ from the notation.

\subsection{A simple example}
If $P(x) = x^2+a_1x+a_0$ is a quadratic polynomial, then $b_n$ is given by
\begin{displaymath}
    b_n = \sum_{j=0}^{n-1} \alpha_1^j\alpha_2^{n-1-j},
\end{displaymath}
and the sequence satisfies the recurrence relation
\begin{displaymath}
    b_n = (\alpha_1+\alpha_2)b_{n-1} - \alpha_1\alpha_2b_{n-2} = -a_1b_{n-1}-a_0b_{n-2}.
\end{displaymath}
This implies that
\begin{align*}
    g(z) &= z + \sum_{n=2}^\infty (-a_1b_{n-1}-a_0b_{n-2})z^n \\
    &= z -a_1zg(z) - a_0z^2g(z) \\
    &= \frac{z}{1+a_1z+a_0z^2}.
\end{align*}
We see that the singularities of $g(z)$ are located at $\alpha_1^{-1}$ and $\alpha_2^{-1}$. If we assume that $|\alpha_1| \geq |\alpha_2|$, this implies that
\begin{displaymath}
    \limsup_{n\to\infty} |b_n|^{\frac{1}{n}} = |\alpha_1|,
\end{displaymath}
which in turn implies that
\begin{displaymath}
    \limsup_{n\to\infty} |\Delta(P_n)|^{\frac{1}{2nd}} = |\alpha_1|^{\frac{1}{2}}.
\end{displaymath}
In this simple example, we can also determine $f$ explicitly. Since
\begin{displaymath}
    \Delta(P_n) = (\alpha_1^n-\alpha_2^n)^2 = \alpha_1^{2n}+\alpha_2^{2n}-2\alpha_1^n\alpha_2^n = \alpha_1^{2n}+\alpha_2^{2n}-2a_0^n,
\end{displaymath}
we have
\begin{displaymath}
    f(z) = \sum_{n=1}^\infty (\alpha_1^{2n}+\alpha_2^{2n}-2a_0^n)z^n = \frac{\alpha_1^2z}{1-\alpha_1^2z}+\frac{\alpha_2^2z}{1-\alpha_2^2z} - \frac{2a_0z}{1-a_0z},
\end{displaymath}
which has singularities at $\alpha_1^{-2}$, $\alpha_2^{-2}$ and $a_0^{-1}$. Furthermore, setting $\tilde{a}_1 = \alpha_1^2+\alpha_2^2$, we obtain
\begin{align*}
    f(z) &= -\frac{-\tilde{a}_1z + 2a_0^2z^2}{1-\tilde{a}_1z+a_0^2z^2} - 2 \frac{a_0z}{1-a_0z} \\
    &= -z\frac{(1-\tilde{a}_1z+a_0^2z^2)'}{1-\tilde{a}_1z+a_0^2z^2} + 2z \frac{(1-a_0z)'}{1-a_0z}.
\end{align*}
Note that this result is in accordance with the following theorem:
\begin{thm}[Minton \cite{Minton}]
    A rational function $f \in \mathbb{Q}(z)$ has the Gauss property (meaning that the coefficients of its Laurent series at 0 satisfy the Gauss congruences) if and only if $f$ is a $\mathbb{Q}$-linear combination of functions of the form $\frac{zu'(z)}{u(z)}$, where $u \in \mathbb{Z}[z]$.
\end{thm}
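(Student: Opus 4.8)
The plan is to route everything through the exponential generating function
\[
F(z):=\exp\!\Bigg(\sum_{n\ge 1}\frac{a_n}{n}z^n\Bigg)\in 1+z\mathbb{Q}[[z]],
\qquad\text{where } f(z)=\sum_{n\ge1}a_nz^n
\]
(after discarding the Laurent coefficients of $f$ of index $\le 0$, which play no role in congruences concerning $a_n$ for $n\ge1$, and which leave $f$ rational). A term-by-term computation gives $zF'(z)/F(z)=f(z)$, and the classical ``necklace'' factorisation $F(z)=\prod_{d\ge1}(1-z^d)^{-b_d/d}$ with $b_d=\sum_{e\mid d}\mu(d/e)a_e$ shows, by induction on $d$, that $F\in\mathbb{Z}[[z]]$ if and only if $d\mid b_d$ for every $d$ --- that is, if and only if $(a_n)$ satisfies the Gauss congruences. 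This reduces the theorem to the claim $(\star)$: \emph{a power series $F\in 1+z\mathbb{Z}[[z]]$ whose logarithmic derivative $zF'/F$ is a rational function is itself a rational function.}

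Granting $(\star)$, both implications are short. If $(a_n)$ satisfies the Gauss congruences, then $F\in\mathbb{Z}[[z]]$, so by $(\star)$ and Fatou's theorem on integer power series we may write $F=A/B$ with $A,B\in\mathbb{Z}[z]$ coprime in $\mathbb{Q}[z]$ and $A(0)=B(0)=1$; then
\[
f(z)=\frac{zF'(z)}{F(z)}=\frac{zA'(z)}{A(z)}-\frac{zB'(z)}{B(z)}
\]
has exactly the asserted shape (with $u_1=A$, $u_2=B$). Conversely, given $f=\sum_j c_j\,zu_j'/u_j$ with $c_j\in\mathbb{Z}$ and $u_j\in\mathbb{Z}[z]$, factor the $u_j$ so as to assume $u_j(0)=1$; integrating gives $\sum_n(a_n/n)z^n=\sum_j c_j\log u_j(z)$, hence $F(z)=\prod_j u_j(z)^{c_j}\in\mathbb{Z}[[z]]$, and the necklace criterion returns the Gauss congruences. (Here the integrality of the $c_j$ and the normalisation $u_j(0)=1$ cannot be dropped: $\tfrac12\bigl(zu_1'/u_1+zu_2'/u_2\bigr)$ with $u_1=1-z$, $u_2=1-3z$ has the integer Taylor coefficients $-(1+3^n)/2$, and $-(1+3^2)/2-\bigl(-(1+3)/2\bigr)=-3$ is odd, so the congruence already fails at $n=2$.)

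It remains to prove $(\star)$, which is the heart of the matter. Writing $zF'/F=f(z)$ in partial fractions gives
\[
F(z)=c\prod_i(z-\lambda_i)^{r_i}\exp\!\Bigg(\sum_i P_i\Bigl(\tfrac{1}{z-\lambda_i}\Bigr)\Bigg),
\]
where the $\lambda_i$ are the poles of $f/z$, the $r_i$ the residues there, and each $P_i$ a constant-free polynomial of degree one less than the pole order at $\lambda_i$; such an $F$ is rational precisely when every $P_i$ vanishes (equivalently, the denominator of $f$ is squarefree) and every $r_i\in\mathbb{Z}$. The approach I would take to exclude both pathologies is local and $p$-adic: at a pole $\lambda_i$ of smallest modulus --- the one governing the radius of convergence of $F$ --- a nonzero $P_i$ or a non-integer $r_i$ forces the Taylor coefficients of the corresponding local factor, and (after checking that the remaining factors cannot cancel it) the coefficients $a_n$ of $F$, to carry arbitrarily high powers of a suitably chosen prime $p$ in their denominators --- any prime dividing the denominator of an offending $r_i$, or, in the essential-singularity case, any prime at all, because of the factorials appearing in the expansion of $\exp(P_i(1/(z-\lambda_i)))$. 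This contradicts $a_n\in\mathbb{Z}$. A purely complex-analytic alternative would invoke the P\'olya--Carlson theorem, but it first needs the radius of convergence normalised to $1$ without spoiling integrality, so the $p$-adic route seems more robust. In short, the only real obstacle is this arithmetic control of the coefficients of $(z-\lambda_i)^{r_i}$ and of $\exp(P_i(1/(z-\lambda_i)))$; the remainder is a formal manipulation of generating functions together with Fatou's theorem.
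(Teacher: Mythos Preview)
The paper does not prove this statement; it is quoted as Minton's theorem with a citation and no argument, and is then used as a black box in the sequel. There is therefore no ``paper's own proof'' to compare against.

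On your proposal itself, two points deserve comment.

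First, you have in effect shown that the theorem as quoted in the paper is not literally true in the $\mathbb{Q}$-linear form. Your example $f=\tfrac12\bigl(\tfrac{zu_1'}{u_1}+\tfrac{zu_2'}{u_2}\bigr)$ with $u_1=1-z$, $u_2=1-3z$ has integer Taylor coefficients $a_n=-(1+3^n)/2$, is a $\mathbb{Q}$-linear combination of the required shape, yet $a_2-a_1=-3\not\equiv0\pmod 2$. So the ``only if'' direction is fine over~$\mathbb{Q}$, but the ``if'' direction needs the coefficients $c_j$ to be integers (with $u_j(0)=\pm1$), which is how Minton actually states it and is also how the paper uses it (see the remark after Theorem~\ref{Minton}). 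You should prove the $\mathbb{Z}$-linear version; do not attempt to establish the $\mathbb{Q}$-linear converse.

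Second, the argument you sketch for the key claim~$(\star)$ is not yet a proof. Writing $F=c\prod_i(z-\lambda_i)^{r_i}\exp\bigl(\sum_iP_i(\tfrac{1}{z-\lambda_i})\bigr)$ is correct, but the assertion that a non-integral $r_i$ or a nonzero $P_i$ forces unbounded $p$-adic denominators in the Taylor coefficients of $F$ is exactly the hard part, and your parenthetical ``after checking that the remaining factors cannot cancel it'' hides all of it. When several $\lambda_i$ have the same absolute value, or when the bad $\lambda_i$ is not the one of smallest modulus, isolating a single factor's contribution to the $n$-th coefficient is delicate; and for the binomial series $(1-z/\lambda_i)^{r_i}$ with algebraic $\lambda_i$, the $p$-adic behaviour of $\binom{r_i}{n}(-\lambda_i)^{-n}$ is not controlled by the denominator of $r_i$ alone. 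The standard way to close this gap is not ad~hoc estimation but an appeal to a rationality criterion: a power series in $\mathbb{Z}[[z]]$ satisfying a first-order linear ODE with rational-function coefficients (equivalently, $zF'/F$ rational) is algebraic, hence rational by Fatou. This is B\'ezivin's theorem (or a consequence of the $p$-adic Borel--Dwork criterion); invoking it replaces your entire sketch of~$(\star)$ with a clean citation.
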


\subsection{Rationality and singularities}
In the quadratic example we studied above, we saw that both $f$ and $g$ are rational functions, and that their singularities are located at certain products of reciprocals of roots of $P$. Similar ideas can be used to show that this holds more generally.

\begin{thm} \label{fprational}
    Let $P \in \mathbb{Z}[x]$ be a monic polynomial without multiple roots. Then $f_P \in \mathbb{Q}(z)$, and all singularities of $f_P$ are simple poles occurring as monomials in the expression
    \begin{equation} \label{notquitedisc}
        \prod_{1\leq j < k \leq d} (\alpha_j^{-1}-\alpha_k^{-1})^2.
    \end{equation}
    Similarly, $g_P$ is a rational function whose singularities are simple poles occurring as monomials in the expression
    \begin{displaymath}
        \prod_{1\leq j < k \leq d} (\alpha_j^{-1}-\alpha_k^{-1}).
    \end{displaymath}
\end{thm}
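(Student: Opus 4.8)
The plan is to express $\Delta(P_n)$ as a linear combination of $n$-th powers of algebraic numbers, from which both rationality and the description of the poles follow. Write $\Delta(P_n) = \prod_{j<k}(\alpha_j^n - \alpha_k^n)^2$ and expand the product. Each term in the expansion is, up to sign, a monomial $\prod_{j} \alpha_j^{c_j n}$ for some nonnegative integers $c_j$; hence $\Delta(P_n) = \sum_{\gamma} \varepsilon_\gamma \gamma^n$, where $\gamma$ ranges over a finite multiset of products of powers of the $\alpha_j$ and $\varepsilon_\gamma = \pm 1$ are the signs coming from the expansion. Consequently
\begin{displaymath}
    f_P(z) = \sum_{n=1}^\infty \Delta(P_n) z^n = \sum_{\gamma} \varepsilon_\gamma \frac{\gamma z}{1-\gamma z},
\end{displaymath}
which is manifestly a rational function with at worst simple poles, located precisely at the reciprocals $\gamma^{-1}$ of the quantities $\gamma$ appearing with nonzero coefficient. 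The key point for the ``monomials in \eqref{notquitedisc}'' claim is that expanding $\prod_{j<k}(\alpha_j^{-1}-\alpha_k^{-1})^2$ produces exactly the same multiset of monomials $\prod_j \alpha_j^{-c_j}$ (with the same signs) as expanding $\prod_{j<k}(\alpha_j^n - \alpha_k^n)^2$ does after dividing by $\prod_j \alpha_j^{(d-1)n}$ — the combinatorics of which pairs $(j,k)$ contribute $\alpha_j$ versus $\alpha_k$ is identical. So every pole $\gamma^{-1}$ of $f_P$ is a monomial appearing in \eqref{notquitedisc}.

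Next I would address rationality over $\mathbb{Q}$ rather than just over $\overline{\mathbb{Q}}$: the sequence $\{\Delta(P_n)\}$ consists of integers, and a power series with integer (or even rational) coefficients that equals a rational function over $\overline{\mathbb{Q}}$ is automatically a rational function over $\mathbb{Q}$ — equivalently, the Galois group permutes the terms $\varepsilon_\gamma \gamma^n/(1-\gamma z)$ among themselves and fixes the sum. This gives $f_P \in \mathbb{Q}(z)$. The same argument, applied to $b_n = \prod_{j<k}\frac{\alpha_j^n-\alpha_k^n}{\alpha_j-\alpha_k}$, works verbatim: expanding $\prod_{j<k}(\alpha_j^n-\alpha_k^n)$ (note: no square) writes $b_n \cdot \prod_{j<k}(\alpha_j-\alpha_k)$ as $\sum_\delta \varepsilon_\delta \delta^n$ with $\delta$ ranging over monomials matching those in $\prod_{j<k}(\alpha_j^{-1}-\alpha_k^{-1})$, so $g_P(z) = \sum_\delta \varepsilon_\delta \frac{\delta z}{1-\delta z}$ up to the constant factor $\prod_{j<k}(\alpha_j-\alpha_k)^{-1}$, and hence $g_P \in \mathbb{Q}(z)$ with simple poles at the reciprocals of those monomials.

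One subtlety worth flagging, and the main place the statement needs care, is the word ``occurring'': a given monomial in the expansion of \eqref{notquitedisc} could in principle cancel against another (different pairs $(j,k)$ can yield the same product of powers of the $\alpha_j$), so the pole set of $f_P$ is the set of reciprocals of monomials that survive with nonzero total coefficient, not literally every monomial one writes down. I would state this precisely: the singularities of $f_P$ are simple poles, each of which equals $\gamma^{-1}$ for some monomial $\gamma$ appearing in \eqref{notquitedisc} — but not every such monomial need give a genuine pole. The hard part is purely bookkeeping: matching up the sign $\varepsilon_\gamma$ from the $(\alpha_j^n-\alpha_k^n)^2$ expansion with the sign from the $(\alpha_j^{-1}-\alpha_k^{-1})^2$ expansion so that the ``same monomials'' claim is rigorous; since both expansions distribute the same binomial over the same index set $\{(j,k): j<k\}$, this is a formal identity, and the only thing to check is that substituting $\alpha_j \mapsto \alpha_j^n$ and $\alpha_j \mapsto \alpha_j^{-1}$ are both ring homomorphisms on the relevant polynomial ring, which they are. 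No genuine obstacle is expected; the result is essentially a direct corollary of the factored form of the discriminant together with the geometric series.
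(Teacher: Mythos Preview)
Your proof is correct and follows essentially the same route as the paper's: expand the product defining $\Delta(P_n)$ to write it as a finite integer combination of $n$-th powers $\gamma^n$, then sum the resulting geometric series to obtain $f_P(z) = \sum_\gamma c_\gamma \frac{\gamma z}{1-\gamma z}$, whose poles $\gamma^{-1}$ are exactly the monomials appearing in $\prod_{j<k}(\alpha_j^{-1}-\alpha_k^{-1})^2$. Your extra remarks on $\mathbb{Q}$-rationality via Galois invariance and on possible cancellation among monomials are useful clarifications the paper leaves implicit; the aside about ``dividing by $\prod_j \alpha_j^{(d-1)n}$'' is unnecessary and a bit garbled, since (as you yourself note at the end) the correspondence between the two expansions follows directly from applying the substitutions $x_j \mapsto \alpha_j^n$ and $x_j \mapsto \alpha_j^{-1}$ to the same formal product $\prod_{j<k}(x_j-x_k)^2$.
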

\begin{proof}
    We only prove this for $f_P$, as the proof for $g_P$ is similar. First, we let $\beta_1,\dots,\beta_s$ be the products of reciprocals of roots of $P$ that appear when one expands \eqref{notquitedisc}, i.e.
    \begin{displaymath}
        \prod_{1\leq j < k \leq d} (\alpha_j^{-1}-\alpha_k^{-1})^2 = \sum_{j=1}^s c_j\beta_j
    \end{displaymath}
    for some integers $c_1,\dots,c_s$. Expanding the product definition of the discriminant, we get the same as above but with $\alpha_j^{-1}$ replaced with $\alpha_j^n$. Hence
    \begin{displaymath}
        \Delta(P_n) = \sum_{j=1}^s c_j\beta_j^{-n}.
    \end{displaymath}
    Then
    \begin{equation} \label{generating}
        f_P(z) = \sum_{n=1}^\infty \sum_{j=1}^s c_j\beta_j^{-n}z^n = \sum_{j=1}^s \frac{c_j\beta_j^{-1}z}{1-\beta_j^{-1}z},
    \end{equation}
    which is clearly a rational function with simple poles at $\beta_j$ and no other singularities.
\end{proof}

Combined with Minton's theorem, this results in the following:
\begin{thm} \label{Minton}
    Let $P \in \mathbb{Z}[x]$ be a monic polynomial. Then there exist polynomials $u_1,\dots,u_r \in \mathbb{Z}[z]$ and rational numbers $c_1,\dots,c_r$ such that
    \begin{displaymath}
        f_P(z) = \sum_{j=1}^r c_j\frac{zu_j'(z)}{u_j(z)}.
    \end{displaymath}
\end{thm}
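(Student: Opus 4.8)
The plan is to combine Theorem~\ref{fprational}, which supplies the rationality of $f_P$, with the Gauss congruences for the sequence $\{\Delta(P_n)\}$ proved in Section~5, and then to invoke Minton's theorem. All the substantive content is already in those two results; what remains is to check their hypotheses and to treat one degenerate case.

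First I would dispose of the case in which $P$ has a repeated root $\alpha$. Then $\alpha^n$ occurs with multiplicity $\geq 2$ among the roots $\alpha_1^n,\dots,\alpha_d^n$ of $P_n$, so $\Delta(P_n)=0$ for every $n\geq 1$ and hence $f_P\equiv 0$. This is of the required shape with, say, $r=1$, $u_1(z)=z$ and $c_1=0$, since $\tfrac{z u_1'(z)}{u_1(z)}=1$. From now on assume $P$ has distinct roots. By Theorem~\ref{fprational} we then have $f_P\in\mathbb{Q}(z)$. Moreover $f_P(z)=\sum_{n\geq 1}\Delta(P_n)z^n$ is a power series with vanishing constant term, so its Laurent expansion at $0$ coincides with this power series, and by the theorem of Section~5 (applicable since $P$ is monic over $\mathbb{Z}$) its coefficients satisfy
\[
    \sum_{m\mid n}\mu\!\Big(\tfrac nm\Big)\Delta(P_m)\equiv 0 \pmod n \qquad (n\geq 1),
\]
i.e. $f_P$ has the Gauss property in the sense of Minton's theorem.

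Finally, Minton's theorem applies verbatim: a rational function in $\mathbb{Q}(z)$ with the Gauss property is a $\mathbb{Q}$-linear combination of functions $\tfrac{zu'(z)}{u(z)}$ with $u\in\mathbb{Z}[z]$, which is precisely the assertion of Theorem~\ref{Minton}. I do not expect a genuine obstacle here — rationality and the arithmetic of the coefficients are exactly what the cited results provide, and the only new wrinkle is the collapse of the multiple-root case to $f_P=0$. One could also bypass Minton entirely by grouping the poles $\beta_j$ of the partial-fraction expansion~\eqref{generating} into Galois orbits: for an orbit $O$, all $c_j$ with $j\in O$ are equal to some $c_O$ (since the total sum $\prod_{j<k}(\alpha_j^{-1}-\alpha_k^{-1})^2$ is Galois-invariant), the polynomial $v_O(z)=\prod_{j\in O}(1-\beta_j^{-1}z)$ lies in $\mathbb{Z}[z]$ because the $\beta_j^{-1}$ are algebraic integers and the product is Galois-invariant, and then $\sum_{j\in O}\tfrac{c_j\beta_j^{-1}z}{1-\beta_j^{-1}z}=-c_O\,\tfrac{z v_O'(z)}{v_O(z)}$; but the route via Minton's theorem is cleaner and subsumes this bookkeeping.
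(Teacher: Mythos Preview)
Your proposal is correct and follows essentially the same route as the paper: the paper simply writes ``Combined with Minton's theorem, this results in the following,'' meaning exactly your argument of pairing the rationality from Theorem~\ref{fprational} with the Gauss congruences of Section~5 and then invoking Minton. You are in fact more careful than the paper in explicitly disposing of the multiple-root case (Theorem~\ref{fprational} assumes distinct roots, which the statement of Theorem~\ref{Minton} does not), and the alternative Galois-orbit argument you sketch is precisely what the paper records in the remark following the theorem.
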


\begin{rmk}
    This result can also be derived directly from \eqref{generating} as the constant $c_j$ is the same for all algebraic conjugates of $\beta_j$. Since all $c_j$ are integers, we see that the rational numbers $c_1,\dots,c_r$ in Theorem \ref{Minton} are actually integers.

    Theorem \ref{Minton} does not hold if we replace $f_P$ by $g_P$ because its coefficients do not satisfy the Gauss congruences.
\end{rmk}

\section{Factorisation of $\Delta(P_n)$ and $U(n)$}
A close look at the factors of the products defining $\Delta(P_n)$ and $U(n)$ reveals that both of these integers can be factored.

\subsection{Factors of $\Delta(P_n)$}
Let $\Phi_n$ denote the $n$-th cyclotomic polynomial. Then from
\begin{displaymath}
    \prod_{m \mid n} \Phi_m(x) = x^n-1
\end{displaymath}
we have
\begin{displaymath}
    (\alpha_j^n-\alpha_k^n) = \alpha_k^n\Big(\frac{\alpha_j^n}{\alpha_k^n}-1\Big) = \alpha_k^n \prod_{m \mid n} \Phi_m\Big(\frac{\alpha_j}{\alpha_k}\Big) = \prod_{m \mid n} \alpha_k^{\phi(m)} \Phi_m\Big(\frac{\alpha_j}{\alpha_k}\Big).
\end{displaymath}
If we now set
\begin{displaymath}
    \Psi_n(P) := \prod_{j < k} \bigg(\alpha_k^{\phi(n)} \Phi_n\Big(\frac{\alpha_j}{\alpha_k}\Big)\bigg)^2,
\end{displaymath}
we obtain
\begin{equation} \label{Discproduct}
    \Delta(P_n) = \prod_{m \mid n} \Psi_m(P),
\end{equation}
where, as usual, we have assumed that $P$ is monic. Note the similarity between this and Lehmer's work on factorising $\prod (\alpha_j^n-1)$. In particular, if $|a_0| = 1$, then all numbers of the form $\alpha_j/\alpha_k$ are algebraic integers, so that Lehmer's results can be applied to $\Delta(P_n)$. Observe that in this case we have
\begin{displaymath}
    \Psi_n(P) = \prod_{j < k} \bigg(\Phi_n\Big(\frac{\alpha_j}{\alpha_k}\Big)\bigg)^2.
\end{displaymath}

Following Lehmer, we call $\Psi_n(P)$ the essential factor of $\Delta(P_n)$ and we define the characteristic prime factors of $\Delta(P_n)$ to be the primes $p \mid \Delta(P_n)$ that do not divide $\Delta(P_m)$ when $m$ is a proper divisor of $n$. For example, if we take $P(x) = x^4+x^3-x^2+x+1$, then $\Delta(P) = -3\cdot13^2$ and $\Delta(P_5) = -2^{16}\cdot3\cdot13^2$, so $2$ is a characteristic prime factor of $\Delta(P_5)$.

\begin{thm}
    Let $p$ be a characteristic prime factor of $\Delta(P_n)$. Then $p$ divides the essential factor $\Psi_n(P)$. Moreover, if $p^k$ is the highest power of $p$ dividing $\Delta(P_n)$, then $p^k \equiv 1 \mod n$.
\end{thm}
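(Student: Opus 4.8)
The plan is to combine the factorisation $\Delta(P_n) = \prod_{m\mid n}\Psi_m(P)$ from \eqref{Discproduct} with a reduction modulo a prime above $p$ in the splitting field $L = \mathbb{Q}(\alpha_1,\dots,\alpha_d)$ of $P$. Throughout I may assume $P$ has distinct roots (otherwise $\Delta(P_n)$ vanishes identically and there is nothing to prove) and that $n \geq 2$ (for $n = 1$ the congruence $p^k \equiv 1 \bmod 1$ is vacuous). Writing $\Phi_n(x) = \sum_i c_i x^i$, put $G_n(y,z) := z^{\phi(n)}\Phi_n(y/z) = \sum_i c_i y^i z^{\phi(n)-i} \in \mathbb{Z}[y,z]$, so that $\Psi_n(P) = \prod_{j<l}G_n(\alpha_j,\alpha_l)^2$; since cyclotomic polynomials are palindromic, $G_n$ is symmetric for $n\geq2$, hence also $\Psi_n(P) = \prod_{j\neq l}G_n(\alpha_j,\alpha_l)$. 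As each $G_n(\alpha_j,\alpha_l)$ is an algebraic integer and $\Psi_n(P)$ is Galois-invariant, every $\Psi_m(P)$ is a rational integer, so $\Psi_m(P)\mid\Delta(P_m)$. The first assertion is now immediate: $p \mid \Delta(P_n) = \prod_{m\mid n}\Psi_m(P)$ forces $p \mid \Psi_{m_0}(P)$ for some $m_0 \mid n$, and $m_0 < n$ would give $p \mid \Delta(P_{m_0})$, contradicting that $p$ is characteristic; hence $p \mid \Psi_n(P)$.

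Next I would prove $p \nmid n$. Write $n = p^e n'$ with $p\nmid n'$ and $e\geq 1$. The standard congruence $\Phi_n(x) \equiv \Phi_{n'}(x)^{\phi(p^e)} \bmod p$ homogenises to $G_n(y,z) \equiv G_{n'}(y,z)^{\phi(p^e)} \bmod p$ in $\mathbb{Z}[y,z]$; substituting $y=\alpha_j$, $z=\alpha_l$ and taking the product over pairs gives $\Psi_n(P) \equiv \Psi_{n'}(P)^{\phi(p^e)} \bmod p$ in $\mathbb{Z}$. Thus $p \mid \Psi_n(P)$ implies $p \mid \Psi_{n'}(P)$, hence $p \mid \Delta(P_{n'})$ with $n'$ a proper divisor of $n$ — a contradiction. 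So $p \nmid n$; in particular $p \nmid \Delta(P) = \Delta(P_1)$ (since $1$ is a proper divisor of $n$), so $\bar P \in \mathbb{F}_p[x]$ is separable and $p$ is unramified in $L$.

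For the exponent, fix a prime $\mathfrak p$ of $\mathcal{O}_L$ above $p$, with residue field $\kappa = \mathbb{F}_{p^f}$ and valuation $v = v_\mathfrak p$. Since $p$ is unramified, $\Psi_n(P) \in \mathbb{Z}$, and (by the first part) $p\nmid\Psi_m(P)$ for every proper $m \mid n$, we obtain $v(\Psi_n(P)) = v_p(\Psi_n(P)) = v_p(\Delta(P_n)) = k$, so $k = \sum_{j\neq l}v(G_n(\alpha_j,\alpha_l))$. A pair with $v(G_n(\alpha_j,\alpha_l)) > 0$ cannot have both $\alpha_j,\alpha_l$ nonunits at $\mathfrak p$ (else $\mathfrak p \mid \alpha_j - \alpha_l$ and $p \mid \Delta(P)$), and — comparing the constant and leading coefficients of $G_n$, which are $\pm1$ — it cannot have exactly one of them a nonunit either; so both are $\mathfrak p$-units, and reducing modulo $\mathfrak p$ shows that $\bar\alpha_j/\bar\alpha_l$ is a root of $\Phi_n$ over $\kappa$, hence (as $p\nmid n$) a primitive $n$-th root of unity. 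In particular $n \mid p^f - 1$, so $t := \operatorname{ord}_n(p) \mid f$. The decomposition group $D_\mathfrak p \cong \operatorname{Gal}(\kappa/\mathbb{F}_p) = \langle\operatorname{Frob}_\mathfrak p\rangle$ (cyclic of order $f$, as $p$ is unramified) permutes the ordered pairs compatibly with its action on the roots, and $v$ is constant on each of its orbits. If $\operatorname{Frob}_\mathfrak p^{\,i}$ stabilises such a pair, it fixes $\bar\alpha_j$ and $\bar\alpha_l$, hence fixes the primitive $n$-th root of unity $\bar\alpha_j/\bar\alpha_l$, so $p^i \equiv 1 \bmod n$, i.e. $t \mid i$; thus the stabiliser lies in $\langle\operatorname{Frob}_\mathfrak p^{\,t}\rangle$, its order divides $f/t$, and the orbit has size a multiple of $t$. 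Summing $v$ over all orbits (only those with $v>0$ contribute, each of size divisible by $t$) gives $t \mid k$, i.e. $p^k \equiv 1 \bmod n$.

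I expect the last step to be the main obstacle: a routine apparition-type argument yields the residue condition $p^f \equiv 1 \bmod n$, but controlling the exponent $k$ requires two extra ingredients — that a characteristic prime for $n\geq2$ is coprime to $\Delta(P)$, so $p$ is unramified in $L$ and $v_p(\Psi_n(P)) = v_{\mathfrak p}(\Psi_n(P))$; and the combinatorial point that the Frobenius at $\mathfrak p$ partitions the $\mathfrak p$-divisible factors of $\Psi_n(P)$ into orbits whose sizes are multiples of $\operatorname{ord}_n(p)$. (When $|a_0| = 1$ one can instead apply Lehmer's classical factorisation results to the polynomial with roots $\alpha_j/\alpha_k$; the argument above is what removes that hypothesis, the only delicate case being $p \mid a_0$, where some roots reduce to $0$ modulo $\mathfrak p$ — but not the ones in any pair contributing to $v(\Psi_n(P))$.)
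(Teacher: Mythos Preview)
Your proof is correct and considerably more than the paper supplies: the paper's entire argument is the single line ``This follows from Theorems 1, 2 and 3 of \cite{Lehmer}'', i.e.\ it applies Lehmer's classical results for $\prod_j(\alpha_j^n-1)$ to the companion polynomial with roots $\alpha_j/\alpha_k$. As the text just above the theorem makes explicit, that reduction needs $|a_0|=1$ so that the quotients $\alpha_j/\alpha_k$ are algebraic integers; the theorem is stated without this hypothesis but the cited proof does not obviously cover the general case.

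Your argument sidesteps this by working directly with the homogenisation $G_n(y,z)=z^{\phi(n)}\Phi_n(y/z)$ and valuations in the splitting field. The two genuinely new ingredients over a straight citation are exactly the ones you flag: (i) a characteristic prime for $n\ge 2$ is coprime to $\Delta(P)$, so $p$ is unramified in $L$ and at most one root can lie in $\mathfrak p$, whence any pair contributing to $v_{\mathfrak p}(\Psi_n(P))$ consists of $\mathfrak p$-units; and (ii) the Frobenius orbit of such a pair has length divisible by $t=\operatorname{ord}_n(p)$, giving $t\mid k$. Both steps are sound (for (i), the leading and constant coefficients of $G_n$ are in fact $1$ for $n\ge 2$, which is what forces $v(G_n)=0$ when exactly one argument is a nonunit). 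The upshot is that your proof is self-contained and removes the $|a_0|=1$ hypothesis, at the cost of a short excursion into decomposition groups; the paper's approach is shorter to state but leans entirely on Lehmer and, as written, is a priori narrower in scope.
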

\begin{proof}
    This follows from Theorems 1, 2 and 3 of \cite{Lehmer}.
\end{proof}

\begin{thm} \label{Psisquare}
    For $n > 1$, $\Psi_n(P)$ is a square.
\end{thm}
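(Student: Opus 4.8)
The plan is to write $\Psi_n(P)$ explicitly as the square of an integer. Directly from the definition,
\[
\Psi_n(P)=\Bigl(\,\prod_{1\le j<k\le d}\alpha_k^{\phi(n)}\Phi_n(\alpha_j/\alpha_k)\Bigr)^{2},
\]
so it suffices to prove that $c:=\prod_{1\le j<k\le d}\alpha_k^{\phi(n)}\Phi_n(\alpha_j/\alpha_k)$ is a rational integer; then $\Psi_n(P)=c^{2}$ (and in particular $\Psi_n(P)\in\mathbb Z$) is a perfect square. I would establish $c\in\mathbb Z$ by showing that $c$ is a symmetric polynomial in $\alpha_1,\dots,\alpha_d$ with integer coefficients: the fundamental theorem of symmetric polynomials then expresses $c$ as a polynomial with integer coefficients in the elementary symmetric functions of the roots, which equal $\pm a_{d-1},\dots,\pm a_0\in\mathbb Z$ because $P$ is monic.

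First I would make the individual factors into honest polynomials by homogenising. Writing $\Phi_n(x)=\sum_{i=0}^{\phi(n)}c_ix^{i}$ with $c_i\in\mathbb Z$, put $\widehat\Phi_n(x,y):=\sum_{i=0}^{\phi(n)}c_ix^{i}y^{\phi(n)-i}=y^{\phi(n)}\Phi_n(x/y)\in\mathbb Z[x,y]$, so that the $(j,k)$ factor of $c$ equals $\widehat\Phi_n(\alpha_j,\alpha_k)$ and is a genuine $\mathbb Z$-polynomial in $\alpha_j,\alpha_k$ (this also takes care of a possible zero root of $P$). Hence $c=\prod_{1\le j<k\le d}\widehat\Phi_n(\alpha_j,\alpha_k)$ lies in $\mathbb Z[\alpha_1,\dots,\alpha_d]$.

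The heart of the argument — and the only place where $n>1$ is used — is the symmetry $\widehat\Phi_n(x,y)=\widehat\Phi_n(y,x)$, which amounts to the palindrome relation $c_i=c_{\phi(n)-i}$, i.e. to $\Phi_n$ being reciprocal. This is classical: the set of primitive $n$-th roots of unity is stable under $\zeta\mapsto\zeta^{-1}$, so $x^{\phi(n)}\Phi_n(1/x)$ and $\Phi_n(x)$ are separable polynomials of the same degree with the same roots, hence proportional, and comparing leading coefficients gives $x^{\phi(n)}\Phi_n(1/x)=\Phi_n(0)\,\Phi_n(x)=\Phi_n(x)$ since $\Phi_n(0)=1$ for $n>1$. (For $n=1$ one gets the minus sign, $x\Phi_1(1/x)=-\Phi_1(x)$, which is exactly why $\Psi_1(P)=\Delta(P)$ need not be a square.) Granting this symmetry, for every permutation $\sigma\in S_d$ we have $\prod_{j<k}\widehat\Phi_n(\alpha_{\sigma(j)},\alpha_{\sigma(k)})=\prod_{j<k}\widehat\Phi_n(\alpha_j,\alpha_k)=c$, because $\sigma$ merely permutes the unordered pairs $\{j,k\}$ and $\widehat\Phi_n$ does not distinguish the order within a pair. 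Thus $c$ is symmetric in $\alpha_1,\dots,\alpha_d$, so $c\in\mathbb Z$ and $\Psi_n(P)=c^{2}$.

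I do not anticipate any serious obstacle: the proof is essentially the observation that the factor $\alpha_k^{\phi(n)}\Phi_n(\alpha_j/\alpha_k)$ is symmetric in $j$ and $k$. The one point genuinely worth spelling out is the reciprocity of $\Phi_n$ for $n>1$ (equivalently the symmetry of $\widehat\Phi_n$), since the whole statement collapses for $n=1$; the homogenisation step is routine bookkeeping needed only to keep everything inside $\mathbb Z[\alpha_1,\dots,\alpha_d]$.
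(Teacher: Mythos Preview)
Your proof is correct and follows essentially the same route as the paper: both hinge on the reciprocity of $\Phi_n$ for $n>1$ to make each factor $\alpha_k^{\phi(n)}\Phi_n(\alpha_j/\alpha_k)$ symmetric in $\alpha_j,\alpha_k$, whence the product over $j<k$ is a symmetric polynomial in all the roots and therefore an integer. Your homogenisation and explicit verification of reciprocity simply flesh out what the paper leaves implicit.
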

\begin{proof}
    Since the cyclotomic polynomial $\Phi_n$ is reciprocal for $n>1$, the expression
    \begin{displaymath}
        \alpha_k^{\phi(n)}\Phi_n\Big(\frac{\alpha_j}{\alpha_k}\Big)
    \end{displaymath}
    is symmetric in $\alpha_j$ and $\alpha_k$. Hence
    \begin{displaymath}
        \prod_{j < k} \alpha_k^{\phi(n)} \Phi_n\Big(\frac{\alpha_j}{\alpha_k}\Big)
    \end{displaymath}
    is an integer, so $\Psi_n(P)$ is a square.
\end{proof}

\begin{cor}
    For all $n$, the discriminant $\Delta(P_n)$ has the same sign.
\end{cor}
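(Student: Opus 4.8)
The plan is to read the statement straight off the factorisation \eqref{Discproduct} together with Theorem \ref{Psisquare}. First I would isolate the factor corresponding to $m=1$. Since $\phi(1)=1$ and $\Phi_1(x)=x-1$, we get
\begin{displaymath}
    \alpha_k^{\phi(1)}\Phi_1\Big(\frac{\alpha_j}{\alpha_k}\Big) = \alpha_k\Big(\frac{\alpha_j}{\alpha_k}-1\Big) = \alpha_j-\alpha_k,
\end{displaymath}
so that $\Psi_1(P)=\prod_{j<k}(\alpha_j-\alpha_k)^2=\Delta(P)=\Delta(P_1)$. Hence \eqref{Discproduct} can be rewritten as
\begin{displaymath}
    \Delta(P_n) = \Delta(P)\prod_{\substack{m\mid n\\ m>1}}\Psi_m(P).
\end{displaymath}

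Next I would apply Theorem \ref{Psisquare}: for every $m>1$ the quantity $\Psi_m(P)$ is the square of an integer, hence a nonnegative integer. Therefore $\Delta(P_n)$ is obtained from $\Delta(P)$ by multiplying through by a product of nonnegative integers, so $\Delta(P_n)$ and $\Delta(P)$ always have the same sign — with the convention that if $\Delta(P)=0$ or one of the essential factors $\Psi_m(P)$ vanishes (equivalently, $P_m$ has a repeated root for some $m\mid n$), then $\Delta(P_n)=0$, which one agrees to count as compatible with ``the same sign''. In particular, if $P$ has distinct roots and $\Delta(P_n)\neq 0$ for all $n$ — the case relevant to Lehmer's problem and to the sign-of-$\delta_n$ theorem stated in Section 2 — then $\operatorname{sign}\Delta(P_n)=\operatorname{sign}\Delta(P)$ for every $n$.

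There is essentially no obstacle here beyond the minor bookkeeping of peeling off the $m=1$ term; all the content was already packaged into Theorem \ref{Psisquare} (via the reciprocity of $\Phi_n$ for $n>1$) and the product formula \eqref{Discproduct}. The only point worth an explicit sentence is the degenerate case of vanishing factors, and one can also cross-check the conclusion against the worked example $P(x)=x^4+x^3-x^2+x+1$, where $\Delta(P)=-3\cdot 13^2$ and $\Delta(P_5)=-2^{16}\cdot 3\cdot 13^2$ indeed share a sign, with $\Delta(P_5)/\Delta(P)=2^{16}=\Psi_5(P)$ a square.
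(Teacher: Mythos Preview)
Your argument is correct and is essentially the paper's own proof: use \eqref{Discproduct} together with Theorem~\ref{Psisquare} to see that every factor $\Psi_m(P)$ with $m>1$ is a nonnegative square, so the sign of $\Delta(P_n)$ is governed by $\Psi_1(P)=\Delta(P)$. Your explicit computation that $\Psi_1(P)=\Delta(P)$ and your remark on the degenerate (vanishing) case just spell out details the paper leaves implicit.
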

\begin{proof}
    By Theorem \ref{Psisquare}, all factors on the right-hand side of \eqref{Discproduct} except possibly $\Psi_1(P)$ are positive. Hence, $\Delta(P_n)$ has the same sign as $\Psi_1(P) = \Delta(P)$ for all $n$.
\end{proof}

We also have the following divisibility result.
\begin{thm} \label{Discdiv}
    Let $P \in \mathbb{Z}[x]$ be a monic polynomial with constant coefficient $a_0 = \pm 1$, let $K = \mathbb{Q}(\alpha_1,\dots,\alpha_d)$ be its splitting field, let $R = \mathcal{O}_K$ be its ring of integers, and let $m \in \mathbb{Z}_{>1}$. If $n$ is divisible by $\#(R/mR)^\times$, then $\Delta(P_n)$ is divisible by $m^{d(d-1)}$.
\end{thm}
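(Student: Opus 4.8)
The plan is to prove the stronger statement that each individual factor $\alpha_j^n - \alpha_k^n$ of $\Delta(P_n)$ is divisible by $m$ in $R$, and then descend to the corresponding divisibility in $\mathbb{Z}$.

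First I would record two elementary facts about the roots. Since $P$ is monic with integer coefficients, every $\alpha_j$ is an algebraic integer and hence lies in $R = \mathcal{O}_K$. Since the constant coefficient satisfies $a_0 = \pm 1$, we have $\prod_{j=1}^d \alpha_j = (-1)^d a_0 = \pm 1$, so $\alpha_j^{-1} = \pm\prod_{k \neq j}\alpha_k \in R$; thus each $\alpha_j$ is a unit of $R$, and its reduction modulo $mR$ lies in $(R/mR)^\times$.

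Next I would invoke finiteness: $R$ is a free $\mathbb{Z}$-module of rank $[K:\mathbb{Q}]$, so $R/mR \cong (\mathbb{Z}/m\mathbb{Z})^{[K:\mathbb{Q}]}$ as an abelian group and in particular is finite, whence $(R/mR)^\times$ is a finite group of order $N := \#(R/mR)^\times$. By Lagrange's theorem the reduction of each $\alpha_j$ satisfies $\alpha_j^N \equiv 1 \pmod{mR}$. Consequently, whenever $N \mid n$ we obtain $\alpha_j^n \equiv 1 \pmod{mR}$ for all $j$, and therefore $\alpha_j^n - \alpha_k^n \equiv 0 \pmod{mR}$ for every pair $j,k$.

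Finally, since $\Delta(P_n) = \prod_{1\le j<k\le d}(\alpha_j^n - \alpha_k^n)^2$ is a product of $2\binom{d}{2} = d(d-1)$ factors each divisible by $m$ in $R$, we conclude that $m^{d(d-1)} \mid \Delta(P_n)$ in $R$; as $\Delta(P_n)$ and $m^{d(d-1)}$ both lie in $\mathbb{Z}$ and $R \cap \mathbb{Q} = \mathbb{Z}$ (that is, $\mathbb{Z}$ is integrally closed), the quotient lies in $\mathbb{Z}$, which yields the claimed divisibility in $\mathbb{Z}$. The only point needing a moment's care is this last passage from divisibility in $R$ to divisibility in $\mathbb{Z}$; the substantive content is simply the observation that $a_0 = \pm 1$ forces the roots to be units of $R$, so that the exponent $N = \#(R/mR)^\times$ kills them modulo $mR$ by Lagrange's theorem.
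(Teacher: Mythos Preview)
Your proof is correct and follows exactly the same line as the paper: the roots are units in $R$ because $a_0=\pm1$, so Lagrange's theorem in the finite group $(R/mR)^\times$ forces $\alpha_j^n\equiv 1\pmod{mR}$ whenever $\#(R/mR)^\times\mid n$, whence each of the $d(d-1)$ factors of $\Delta(P_n)$ lies in $mR$ and the divisibility descends to $\mathbb{Z}$. If anything, you spell out a couple of points (finiteness of $R/mR$, the descent via $R\cap\mathbb{Q}=\mathbb{Z}$) that the paper leaves implicit.
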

\begin{proof}
    Since $a_0 = \pm 1$, all $\alpha_j$ are units in $R$. Hence, their residue classes are units in the quotient $R/mR$. This implies that for all $j$ we have
    \begin{displaymath}
        \alpha_j^n \equiv 1 \bmod mR,
    \end{displaymath}
    which implies that $\alpha_j^n-\alpha_k^n \in mR$ for all $j,k$. Hence, $\Delta(P_n) \in m^{d(d-1)}R$. Since $\Delta(P_n) \in \mathbb{Z}$, the statement follows.
\end{proof}

\begin{ex}
    Let $P(x) = x^2-3x+1$ and $m = 2$. Then $R = \mathbb{Z}[\alpha_j]$, where $\alpha_j$ is either root of $P$. Since $P$ is irreducible modulo 2, we have $(R/2R)^\times \cong \mathbb{F}_4^\times$, which has 3 elements. Thus, according to Theorem \ref{Discdiv}, $\Delta(P_3)$ should be divisible by $2^2$. Indeed, we have $\Delta(P_3) = 2^6\cdot 5$, with even stronger divisibility than predicted by the theorem.
\end{ex}

In practice, the condition in by Theorem \ref{Discdiv} is often stronger than necessary: in some cases, the order of $\alpha$ in the ring $(R/mR)^\times$ is strictly less than $\#(R/mR)^\times$. When this is the case, divisibility by $m^{d(d-1)}$ occurs for lower values of $n$. In particular, the following holds.
\begin{thm} \label{Discdiv2}
    Let $P \in \mathbb{Z}[x]$ be a monic polynomial with constant coefficient $a_0 = \pm 1$, let $K$ be its splitting field, let $R$ be the ring of integers of $K$, and let $m \in \mathbb{Z}_{>1}$. For each root $\alpha_j$ of $P$, let $m_j$ denote the order of $\alpha_j$ in the group $(R/mR)^\times$, and let $M = \lcm(m_1,\dots,m_d)$. If $n$ is divisible by $M$, then $\Delta(P_n)$ is divisible by $m^{d(d-1)}$.
\end{thm}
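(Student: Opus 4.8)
The plan is to run the argument of Theorem \ref{Discdiv} essentially verbatim, with the single change that the group order $\#(R/mR)^\times$ is replaced by the quantity $M$. So first I would note, exactly as before, that since $a_0 = \pm 1$ the product $\prod_j \alpha_j = \pm 1$ is a unit, and hence each $\alpha_j$ is a unit in $R$ (its inverse is, up to sign, the product of the remaining roots, which is an algebraic integer). Therefore the image $\bar\alpha_j$ of $\alpha_j$ in the finite ring $R/mR$ lies in $(R/mR)^\times$, so the order $m_j$ is well-defined and finite, and $M = \lcm(m_1,\dots,m_d)$ makes sense.

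The key step is then the observation that if $M \mid n$, then for every $j$ we have $m_j \mid M \mid n$, so $\bar\alpha_j^{\,n} = 1$ in $R/mR$, i.e.\ $\alpha_j^n \equiv 1 \bmod mR$. Subtracting these congruences for two indices gives $\alpha_j^n - \alpha_k^n \in mR$ for all $j,k$. Writing the discriminant as
\begin{displaymath}
    \Delta(P_n) = \prod_{1 \le j < k \le d} (\alpha_j^n - \alpha_k^n)^2,
\end{displaymath}
we see it is a product of $d(d-1)$ elements of the ideal $mR$, hence $\Delta(P_n) \in m^{d(d-1)} R$.

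Finally I would pass from divisibility in $R$ to divisibility in $\mathbb{Z}$: since $\Delta(P_n)$ is a rational integer and $m^{d(d-1)} R \cap \mathbb{Z} = m^{d(d-1)}\mathbb{Z}$ (because $R \cap \mathbb{Q} = \mathbb{Z}$, $R$ being integrally closed and consisting of algebraic integers), we conclude $m^{d(d-1)} \mid \Delta(P_n)$, as claimed.

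I do not expect any real obstacle here; the only point requiring a moment's care is the last step, the descent of the divisibility from $R$ to $\mathbb{Z}$, which is identical to the corresponding step in the proof of Theorem \ref{Discdiv}. It is also worth remarking that this theorem formally contains Theorem \ref{Discdiv}: by Lagrange's theorem each $m_j$ divides $\#(R/mR)^\times$, hence so does $M$, so the hypothesis $\#(R/mR)^\times \mid n$ is a special case of $M \mid n$.
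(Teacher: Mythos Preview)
Your proposal is correct and is exactly the argument the paper intends: the paper does not give a separate proof of Theorem \ref{Discdiv2} because it is the obvious modification of the proof of Theorem \ref{Discdiv}, replacing $\#(R/mR)^\times$ by $M=\lcm(m_1,\dots,m_d)$. Your additional remarks (well-definedness of the $m_j$, the descent $m^{d(d-1)}R\cap\mathbb{Z}=m^{d(d-1)}\mathbb{Z}$, and that Theorem \ref{Discdiv} is a special case via Lagrange) are all fine and in the same spirit.
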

An additional advantage of Theorem \ref{Discdiv2} over Theorem \ref{Discdiv} is that it does not require computing $R$: $m_j$ can be computed without knowledge of $R$.

\begin{ex}
    Let $P(x) = x^3-x-1$ and $m=2$. Since $P$ is irreducible modulo 2, for each $\alpha_j$ we have $\mathbb{Z}[\alpha_j]/2\mathbb{Z}[\alpha_j] \cong \mathbb{F}_2(\alpha_j) \cong \mathbb{F}_8$. Hence, $m_j = 7$ for all $j$, so according to Theorem \ref{Discdiv2}, $\Delta(P_7)$ should be divisible by $2^6$. Indeed, $\Delta(P_7) = -2^6\cdot23$.
\end{ex}

\begin{ex}
    Let $P(x) = x^4+x^3-x^2+x+1$ and $m=2$. $P$ is irreducible modulo 2, so $m_j$ is the same for all $j$, and $m_j \mid 15$. Moreover, since $P$ is reciprocal and the Galois group $\operatorname{Gal}(\mathbb{F}_{16}/\mathbb{F}_2)$ is generated by the Frobenius automorphism, $\alpha^{-1} = \alpha^{2^k}$ for some $k$. Since we also have $\alpha = \alpha^{-2^k} = (\alpha^{2^k})^{2^k}$, it follows that $4^k = 16$, so $2^k = 1$. Hence, $\alpha^5 = 1$, so $m_j = 5$. This implies that $\Delta(P_5)$ is divisible by $2^{12}$.
\end{ex}

\subsection{A sequence related to $\Delta(P_n)$}
The Gauss congruences for $\Delta(P_n)$ suggest defining the following sequence of integers:
\begin{displaymath}
    \delta_n(P) := \sum_{m \mid n} \mu\Big(\frac nm\Big) \Delta(P_m).
\end{displaymath}
By the Gauss congruences, we have $n \mid \delta_n(P)$ for all $n$, and by Möbius inversion we also have
\begin{equation} \label{Deltasum}
    \Delta(P_n) = \sum_{m \mid n} \delta_m(P).
\end{equation}

The sequence $\{\delta_n(P)\}$ has the following property:
\begin{thm} \label{smalldeltasign}
    Let $P \in \mathbb{Z}[x]$ be a monic polynomial such that $\Delta(P_n) \neq 0$ for all $n$. Then the integer $\delta_n(P)$ has the same sign for all $n$.
\end{thm}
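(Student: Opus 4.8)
The plan is to reduce the statement to the factorisation $\Delta(P_n)=\prod_{m\mid n}\Psi_m(P)$ of \eqref{Discproduct} together with a purely combinatorial identity for the Möbius transform of a divisor-indexed product. First I would note that, because $\Delta(P_n)\neq 0$ for every $n$ and $\Delta(P_n)$ is a finite product of the integers $\Psi_m(P)$, each $\Psi_m(P)$ is a nonzero integer; moreover $\Psi_m(P)$ is a perfect square for $m>1$ by Theorem \ref{Psisquare} (it is the square of the integer $\prod_{j<k}\alpha_k^{\phi(m)}\Phi_m(\alpha_j/\alpha_k)$), so in fact $\psi_m:=\Psi_m(P)\geq 1$ for all $m>1$, while $\psi_1=\Delta(P)$ is a nonzero integer of some fixed sign $\epsilon\in\{\pm 1\}$. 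Setting $\kappa_m:=\psi_m-1\geq 0$ for $m>1$, we get $\epsilon\Delta(P_n)=|\psi_1|\prod_{1<m\mid n}(1+\kappa_m)$ and hence $\epsilon\delta_n(P)=|\psi_1|\sum_{m\mid n}\mu(n/m)\prod_{1<e\mid m}(1+\kappa_e)$. Since $\delta_1(P)=\Delta(P)=\psi_1$ already has sign $\epsilon$, it suffices to prove that $h(n):=\sum_{m\mid n}\mu(n/m)\prod_{1<e\mid m}(1+\kappa_e)\geq 0$ for every $n$.

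The main step is the evaluation of $h(n)$. I would expand $\prod_{1<e\mid m}(1+\kappa_e)=\sum_{S}\prod_{e\in S}\kappa_e$, the sum ranging over all subsets $S$ of the set of divisors of $m$ that exceed $1$, substitute this into $h(n)$, and interchange the two sums. This produces $h(n)=\sum_{S}\bigl(\prod_{e\in S}\kappa_e\bigr)\sum_{m}\mu(n/m)$, where the inner sum is over those $m$ with $m\mid n$ that are divisible by every element of $S$, i.e.\ over $m$ with $\lcm(S)\mid m\mid n$. Writing $m=\lcm(S)\,m'$, the inner sum becomes $\sum_{m'\mid n/\lcm(S)}\mu\bigl((n/\lcm(S))/m'\bigr)$, which vanishes unless $\lcm(S)=n$ and equals $1$ in that case. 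Therefore
\begin{displaymath}
    h(n)=\sum_{\substack{S\subseteq\{d:\,1<d\mid n\}\\ \lcm(S)=n}}\ \prod_{e\in S}\kappa_e ,
\end{displaymath}
a sum of nonnegative terms, so $h(n)\geq 0$. Combined with the first paragraph this gives $\epsilon\,\delta_n(P)\geq 0$ for all $n$, i.e.\ no $\delta_n(P)$ has sign opposite to $\Delta(P)$, which is the assertion.

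I do not expect any real obstacle here: the manipulations are routine, and the only point requiring a little attention is that the $\Psi_m(P)$ must be genuine integers (so that $\psi_m\geq 1$ is legitimate), which is already built into Theorem \ref{Psisquare} and \eqref{Discproduct}. The one modelling choice is the meaning of ``same sign'' when some $\delta_n(P)$ vanish — and they can, for instance whenever $\psi_m=1$ for every $m$ with $1<m\mid n$ — so I would phrase the conclusion as $\epsilon\,\delta_n(P)\geq 0$ with $\epsilon=\operatorname{sign}\Delta(P)$, equivalently that the nonzero terms of $\{\delta_n(P)\}$ all carry the sign of $\Delta(P)$.
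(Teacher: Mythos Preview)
Your argument is correct. Both proofs rest on the same two inputs --- the factorisation $\Delta(P_n)=\prod_{m\mid n}\Psi_m(P)$ and the fact that $\Psi_m(P)\ge 1$ for $m>1$ --- but you use them more directly. The paper proceeds by an inductive construction: starting from the constant sequence $\Delta(P)$ and, one divisor $l$ at a time, multiplying the terms indexed by multiples of $l$ by $\Psi_l(P)\ge 1$, it invokes Lemma~\ref{sequences} at each step to show that nonnegativity of the M\"obius transform is preserved. You instead expand $\prod_{1<e\mid m}(1+\kappa_e)$ and swap sums to obtain the closed formula
\[
h(n)=\sum_{\substack{S\subseteq\{e:1<e\mid n\}\\ \lcm(S)=n}}\ \prod_{e\in S}\kappa_e,
\]
which is manifestly nonnegative. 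This is shorter and more transparent, and the explicit formula is a bonus (for instance it makes immediately visible when $\delta_n(P)=0$). The paper's route, by contrast, isolates Lemma~\ref{sequences} as a standalone tool that could in principle be reused for other multiplicatively built sequences. Your reading of ``same sign'' as $\epsilon\,\delta_n(P)\ge 0$ matches exactly what the paper proves.
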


To prove this theorem, we need the following lemma.
\begin{lem} \label{sequences}
    Let $\{a_n\}$ be a sequence of positive numbers with the property that for all $n, k$, the inequality
    \begin{displaymath}
        \sum_{m \mid n} \mu\Big(\frac nm\Big) a_{mk} \geq 0
    \end{displaymath}
    holds, and let $N \in \mathbb{Z}_{\geq 1}$ and $C \geq 1$. Define the sequence
    \begin{displaymath}
        b_n =
        \begin{cases}
            Ca_n & \text{if } N \mid n \\
            a_n & \text{if } N \nmid n.
        \end{cases}
    \end{displaymath}
    Then the sequence $\{b_n\}$ satisfies the inequality
    \begin{displaymath}
        \sum_{m \mid n} \mu\Big(\frac nm\Big) b_{mk} \geq 0
    \end{displaymath}
    for all $n, k$.
\end{lem}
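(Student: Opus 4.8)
The plan is to realise $b_n$ as a nonnegative combination of shifts of the $a_n$ and then exploit the linearity of the operator $\{c_n\}\mapsto\sum_{m\mid n}\mu(n/m)c_{mk}$. Since $C\geq 1$, I would write $b_n=a_n+(C-1)\,a_n\,[N\mid n]$, where $[\,\cdot\,]$ denotes the Iverson bracket; this agrees with both cases of the definition. Plugging this in,
$$\sum_{m\mid n}\mu\Big(\frac nm\Big)b_{mk}=\sum_{m\mid n}\mu\Big(\frac nm\Big)a_{mk}+(C-1)\sum_{\substack{m\mid n\\ N\mid mk}}\mu\Big(\frac nm\Big)a_{mk}.$$
The first sum is $\geq 0$ by the hypothesis on $\{a_n\}$, and $C-1\geq 0$, so everything reduces to showing that the constrained sum $S:=\sum_{m\mid n,\,N\mid mk}\mu(n/m)a_{mk}$ is again $\geq 0$; my expectation is that $S$ is once more a Möbius-sum of exactly the type appearing in the hypothesis, but for a cleverly modified pair of indices.

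To pin down that pair, I would rewrite the divisibility constraint $N\mid mk$ prime by prime: it is equivalent to $v_p(m)\geq\max\{0,v_p(N)-v_p(k)\}$ for every prime $p$, i.e. to $N_k\mid m$, where $N_k$ is the divisor of $N$ defined by $v_p(N_k)=\max\{0,v_p(N)-v_p(k)\}$ (and one checks at once that $N\mid N_kk$). There are two cases. If $N_k\nmid n$, then no divisor of $n$ is a multiple of $N_k$, so $S$ is an empty sum and $S=0$. If $N_k\mid n$, set $n_0:=n/N_k$; the divisors $m$ of $n$ that are multiples of $N_k$ are exactly the integers $m=n/e$ with $e\mid n_0$, and for such $m$ one has $mk=(N_kk)(n_0/e)$ and $\mu(n/m)=\mu(e)$. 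Substituting and then replacing $e$ by $n_0/e$ gives $S=\sum_{e\mid n_0}\mu(n_0/e)\,a_{e\cdot(N_kk)}$, which is precisely the hypothesis inequality for $\{a_n\}$ with $n$ replaced by $n_0$ and $k$ replaced by $N_kk$. Hence $S\geq 0$ and the lemma follows.

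I expect the whole argument to be essentially bookkeeping; the only delicate point — and the natural place for an error — is the reindexing in the case $N_k\mid n$, where one must verify that $m\mapsto n/m$ gives a bijection between $\{m:N_k\mid m\mid n\}$ and the set of divisors of $n_0=n/N_k$. This uses $N_k\mid n$ to ensure $v_p(n/N_k)=v_p(n)-v_p(N_k)$, so that $eN_k\mid n$ is equivalent to $e\mid n_0$. It is worth noting that the positivity of the $a_n$ is not actually used anywhere in this lemma: only $C\geq 1$ and the assumed nonnegativity of all the relevant Möbius-sums of $\{a_n\}$ enter the argument.
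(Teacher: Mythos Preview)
Your proof is correct and follows essentially the same route as the paper: both decompose $\sum_{m\mid n}\mu(n/m)b_{mk}$ into the unconstrained $a$-sum plus $(C-1)$ times the constrained sum $S$, and then reindex $S$ to recognise it as another instance of the hypothesis. Your auxiliary integer $N_k$ (with $v_p(N_k)=\max\{0,v_p(N)-v_p(k)\}$) is exactly $N/\gcd(N,k)$, so $N_kk=\lcm(N,k)$ and $n/N_k=nk/\lcm(N,k)$; thus your substitution $(n,k)\to(n/N_k,\,N_kk)$ is literally the same as the paper's $(n,k)\to(nk/\lcm(N,k),\,\lcm(N,k))$, just written in different notation.
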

\begin{proof}
    If $N \nmid nk$, then $N \nmid km$ for all $m \mid n$, so
    \begin{displaymath}
        \sum_{m \mid n} \mu\Big(\frac nm\Big) b_{mk} = \sum_{m\mid n} \Big(\frac nm \Big) a_{mk} \geq 0.
    \end{displaymath}
    If $N \mid nk$, then
    \begin{displaymath}
        \sum_{m \mid n} \mu\Big( \frac nm\Big) b_{km} = (C-1) \sum_{\substack{m \mid n \\ N \mid mk}} \mu\Big( \frac nm\Big) a_{mk} + \sum_{m \mid n} \mu\Big(\frac nm\Big) a_{mk}.
    \end{displaymath}
    By assumption, $C-1 \geq 0$ and $\sum \mu(n/m) a_{mk} \geq 0$, so we are done if we can show that
    \begin{displaymath}
        \sum_{\substack{m \mid n \\ N \mid mk}} \mu\Big( \frac nm\Big) a_{mk} \geq 0.
    \end{displaymath}
    Setting $m' = mk$, we can rewrite this sum as follows:
    \begin{displaymath}
        \sum_{\substack{m \mid n \\ N \mid mk}} \mu\Big( \frac nm\Big) a_{mk} = \sum_{\substack{m' \mid nk \\ N \mid m' \\ k \mid m'}} \mu\Big(\frac{nk}{m'}\Big) a_{m'} = \sum_{\substack{m' \mid nk \\ \lcm(N,k) \mid m'}} \mu\Big(\frac{nk}{m'}\Big) a_{m'}.
    \end{displaymath}
    If we now set $m'' = \frac{m'}{\lcm(N,k)}$, we get
    \begin{displaymath}
        \sum_{\substack{m' \mid nk \\ \lcm(N,k) \mid m'}} \mu\Big(\frac{nk}{m'}\Big) a_{m'} = \sum_{\substack{m''|\frac{nk}{\lcm(N,k)}}} \mu\Big(\frac{nk}{\lcm(N,k)m''}\Big) a_{\lcm(N,k)m''} \geq 0,
    \end{displaymath}
    where the inequality follows from the assumption on the sequence $\{a_n\}$.
\end{proof}

\begin{proof}[Proof of Theorem \ref{smalldeltasign}]
    Since $\Delta(P_n)$ has the same sign for all $n$, we may without loss of generality assume that $\Delta(P_n) > 0$ for all $n$, so that the statement we want to prove is that $\delta_n(P) \geq 0$ for all $n$. We prove this by recursively constructing a bi-indexed sequence $\{a_{l,n}\}$ as follows: let $a_{1,n} = \Delta(P)$ and for $l \geq 2$, define $a_{l,n}$ by
    \begin{displaymath}
        a_{l,n} =
        \begin{cases}
            \Psi_l(P)a_{l-1,n} & \text{if } l\mid n \\
            a_{l-1,n} & \text{if } l \nmid n.
        \end{cases}
    \end{displaymath}
    Since $\{a_{1,n}\}$ is a constant sequence, it certainly satisfies the condition of Lemma \ref{sequences}. Suppose that for some $l$, the inequality
    \begin{displaymath}
        \sum_{m \mid n} \mu\Big(\frac nm\Big) a_{l-1,mk} \geq 0
    \end{displaymath}
    holds for all $n$ and $k$. Then Lemma \ref{sequences} and the fact that $\Psi_l(P) \geq 1$ together imply that
    \begin{displaymath}
        \sum_{m \mid n} \mu\Big(\frac nm\Big) a_{l,mk} \geq 0
    \end{displaymath}
    holds for all $n$ and $k$. By induction on $l$, it follows that this inequality holds for all $l$, $n$ and $k$. By construction, $a_{l,n} = \Delta(P_n)$ for $n \leq l$ (see \eqref{Discproduct}), so this implies that
    \begin{displaymath}
        \delta_n(P) = \sum_{m \mid n} \mu\Big(\frac nm\Big) \Delta(P_m) = \sum_{m \mid n} \mu \Big( \frac nm\Big) a_{n,m} \geq 0. \qedhere
    \end{displaymath}
\end{proof}

\begin{rmk}
    The condition $\Delta(P_n) \neq 0$ for all $n$ is necessary: if $\Delta(P_n) = 0$ for some $n$, then the right-hand side of \eqref{Deltasum} must contain both positive and negative terms.
\end{rmk}

\begin{cor}
    Let $P \in \mathbb{Z}[x]$ be a monic polynomial such that $\Delta(P_n) \neq 0$ for all $n$. If $p \nmid n$ is a prime and $k$ is a positive integer, then
    \begin{displaymath}
        |\delta_n(P_{p^k})| \geq |\delta_n(P_{p^{k-1}})|.
    \end{displaymath}
\end{cor}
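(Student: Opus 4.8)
The plan is to reduce the claimed inequality to a single invocation of Theorem~\ref{smalldeltasign}, through the elementary identity
\[
    \delta_n(P_{p^k}) - \delta_n(P_{p^{k-1}}) = \delta_{p^k n}(P),
\]
valid whenever $p \nmid n$ and $k \geq 1$. The first step is to establish this identity. Since the roots of $(P_{p^a})_m$ are the $m$-th powers of the $p^a$-th powers of the roots of $P$, one has $(P_{p^a})_m = P_{p^a m}$, and therefore $\delta_n(P_{p^a}) = \sum_{m \mid n} \mu(n/m)\,\Delta(P_{p^a m})$. On the other hand, using $\gcd(p,n) = 1$, every divisor $e$ of $p^k n$ has a unique representation $e = p^i d$ with $0 \le i \le k$ and $d \mid n$, and then $\mu(p^k n / e) = \mu(p^{k-i})\mu(n/d)$, which vanishes unless $i \in \{k-1,k\}$. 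Collecting the $i = k$ terms gives $\sum_{d \mid n}\mu(n/d)\Delta(P_{p^k d}) = \delta_n(P_{p^k})$ and the $i = k-1$ terms give $-\sum_{d \mid n}\mu(n/d)\Delta(P_{p^{k-1} d}) = -\delta_n(P_{p^{k-1}})$, which proves the identity.

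The second step is sign bookkeeping. Because $\Delta(P_m) \neq 0$ for every $m$, each of $P$, $P_{p^k}$ and $P_{p^{k-1}}$ satisfies the hypothesis of Theorem~\ref{smalldeltasign} — indeed $\Delta\bigl((P_{p^a})_m\bigr) = \Delta(P_{p^a m}) \neq 0$ — so each of the sequences $\{\delta_m(P)\}$, $\{\delta_m(P_{p^k})\}$, $\{\delta_m(P_{p^{k-1}})\}$ has constant sign. For any monic $Q$ with $\Delta(Q_m)\neq 0$ for all $m$ we have $\delta_1(Q) = \Delta(Q)$, so the common sign of $\{\delta_m(Q)\}$ is that of $\Delta(Q)$; moreover, by the fact that $\Delta(Q_n)$ has constant sign for all $n$ (established above via Theorem~\ref{Psisquare}), the discriminants $\Delta(P)$, $\Delta(P_{p^k})$ and $\Delta(P_{p^{k-1}})$ all share a common sign, which I call $\varepsilon \in \{+1,-1\}$. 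Consequently $|\delta_n(P_{p^k})| = \varepsilon\,\delta_n(P_{p^k})$, $|\delta_n(P_{p^{k-1}})| = \varepsilon\,\delta_n(P_{p^{k-1}})$, and likewise $|\delta_{p^k n}(P)| = \varepsilon\,\delta_{p^k n}(P)$.

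Putting the two steps together,
\[
    |\delta_n(P_{p^k})| - |\delta_n(P_{p^{k-1}})| = \varepsilon\bigl(\delta_n(P_{p^k}) - \delta_n(P_{p^{k-1}})\bigr) = \varepsilon\,\delta_{p^k n}(P) = |\delta_{p^k n}(P)| \geq 0,
\]
which is exactly the asserted inequality. The only place that takes a bit of care — the main obstacle, such as it is — is the combinatorial bookkeeping in the first step: one has to use the hypothesis $p \nmid n$ to split the divisors of $p^k n$ uniquely and to factor $\mu$ across the split, and the hypothesis $k \geq 1$ to ensure that $i = k-1$ is an admissible exponent. Once the identity is in hand, everything else is formal.
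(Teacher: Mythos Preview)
Your proof is correct and follows essentially the same route as the paper: both establish the identity $\delta_{p^k n}(P) = \delta_n(P_{p^k}) - \delta_n(P_{p^{k-1}})$ by splitting the divisors of $p^k n$ according to their $p$-part and then invoke Theorem~\ref{smalldeltasign}. The only difference is cosmetic: the paper assumes $\Delta(P) > 0$ without loss of generality, whereas you carry the sign $\varepsilon$ explicitly throughout.
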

\begin{proof}
    Without loss of generality, we assume $\Delta(P) > 0$. Then
    \begin{align*}
        \delta_{np^k}(P) &= \sum_{m \mid np^k} \mu\Big(\frac{np^k}{m}\Big) \Delta(P_m) \\
        &= \sum_{m\mid n} \mu\Big(\frac{np^k}{mp^k}\Big) \Delta(P_{mp^k}) + \mu\Big(\frac{np^k}{mp^{k-1}}\Big) \Delta(P_{mp^{k-1}}) \\
        &= \sum_{m \mid n} \mu\Big(\frac nm\Big) \big(\Delta(P_{mp^k}) - \Delta(P_{mp^{k-1}})\big) \\
        &= \delta_n(P_{p^k}) - \delta_n(P_{p^{k-1}}).
    \end{align*}
    Since $\delta_{np^k}(P) \geq 0$, this implies that $\delta_n(P_{p^k}) \geq \delta_n(P_{p^{k-1}})$.
\end{proof}

Another corollary of Theorem \ref{smalldeltasign} is the following:
\begin{cor}
    We have
    \begin{displaymath}
        |\Delta(P_n)| = \sum_{m \mid n} |\delta_m(P)|.
    \end{displaymath}
\end{cor}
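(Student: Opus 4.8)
The plan is to deduce this directly from the sign information already in hand, the only work being to pin down the relevant signs. Since the statement is phrased as a corollary of Theorem~\ref{smalldeltasign}, I keep its standing hypothesis that $\Delta(P_n)\neq 0$ for every $n$. Under that hypothesis the earlier corollary that $\Delta(P_n)$ has the same sign for all $n$ provides a fixed $\varepsilon\in\{+1,-1\}$ with $\varepsilon\,\Delta(P_n)>0$ for all $n$, and Theorem~\ref{smalldeltasign} tells us that the integers $\delta_n(P)$ are likewise all of one fixed sign.

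First I would identify that common sign of the $\delta_n(P)$. Evaluating the definition $\delta_n(P)=\sum_{m\mid n}\mu(n/m)\Delta(P_m)$ at $n=1$ gives $\delta_1(P)=\Delta(P_1)=\Delta(P)$, whose sign is $\varepsilon$; hence the common sign of the $\delta_n(P)$ is exactly $\varepsilon$, so that $|\delta_m(P)|=\varepsilon\,\delta_m(P)$ for every $m$ and also $|\Delta(P_n)|=\varepsilon\,\Delta(P_n)$.

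Then I would multiply the Möbius-inversion identity \eqref{Deltasum} through by $\varepsilon$:
\[
    \sum_{m\mid n}|\delta_m(P)|=\varepsilon\sum_{m\mid n}\delta_m(P)=\varepsilon\,\Delta(P_n)=|\Delta(P_n)|,
\]
which is exactly the asserted formula.

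No step presents a genuine obstacle; the argument amounts to bookkeeping of the single sign $\varepsilon$. The one point worth flagging — and the reason this is a corollary of Theorem~\ref{smalldeltasign} and not of \eqref{Deltasum} alone — is that the hypothesis $\Delta(P_n)\neq 0$ for all $n$ is indispensable: without it the $\delta_m(P)$ need not share a sign (as observed in the remark following that theorem), and the best one could then conclude is $\bigl|\sum_{m\mid n}\delta_m(P)\bigr|=|\Delta(P_n)|$, which is strictly weaker than the stated identity.
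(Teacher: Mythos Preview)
Your argument is correct and is exactly the deduction the paper intends: the paper leaves this corollary unproved, relying on the reader to combine Theorem~\ref{smalldeltasign} (all $\delta_m(P)$ share a sign) with the Möbius-inversion identity \eqref{Deltasum} and the earlier corollary that all $\Delta(P_n)$ share a sign. Your use of $\delta_1(P)=\Delta(P)$ to pin down that common sign as $\varepsilon$ is the natural way to make this explicit, and your remark on the necessity of the hypothesis $\Delta(P_n)\neq 0$ matches the paper's own observation.
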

In particular, this implies that a lower bound for $|\delta_m(P)|$ immediately implies a lower bound for $|\Delta(P_n)|$.

\begin{thm}
    For sufficiently large $n$, $\delta_n(P)$ is divisible by $12$.
\end{thm}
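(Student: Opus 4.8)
\emph{Proof proposal.} The plan is to handle the prime powers $4$ and $3$ separately (note $12=\lcm(4,3)$), using the factorisation $\Delta(P_n)=\prod_{m\mid n}\Psi_m(P)$ from \eqref{Discproduct} together with the fact (Theorem~\ref{Psisquare}) that $\Psi_m(P)$ is a perfect square for $m>1$. If $P$ has a repeated root then $\Psi_1(P)=\Delta(P)=0$, so $\delta_n(P)=0$ for all $n$; assume henceforth that $P$ has distinct roots. The Gauss congruence for $\{\Delta(P_n)\}$ gives $n\mid\delta_n(P)$, so $4\mid\delta_n(P)$ whenever $4\mid n$ and $3\mid\delta_n(P)$ whenever $3\mid n$. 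It therefore suffices to prove $4\mid\delta_n(P)$ when $v_2(n)\le1$ and $3\mid\delta_n(P)$ when $3\nmid n$, in each case for all sufficiently large $n$.

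Fix $\ell\in\{2,3\}$. If $\ell\mid\Delta(P)$ then $\Psi_1(P)\equiv0\pmod\ell$, so $\Delta(P_m)\equiv0\pmod\ell$ for every $m$; for $\ell=2$, Stickelberger's congruence (the discriminant of a monic integer polynomial is $\equiv0$ or $1\bmod4$) turns $2\mid\Delta(P)$ into $4\mid\Delta(P)$, and then $\Delta(P_m)\equiv0\pmod4$. In either case $\delta_n(P)$ is divisible by $\ell$ (resp.\ $4$) for all $n$, and we are done, so assume $\ell\nmid\Delta(P)$, i.e.\ $\bar P:=P\bmod\ell$ is separable. Put $\mathcal O_\ell:=\{\,d:\ell\nmid d,\ \ell\mid\Psi_d(P)\,\}$; since $\ell\nmid\Psi_1(P)$, every element of $\mathcal O_\ell$ is $\ge2$ and coprime to $\ell$. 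If $\ell\mid\Psi_d(P)=\prod_{j<k}\bigl(\alpha_k^{\phi(d)}\Phi_d(\alpha_j/\alpha_k)\bigr)^2$, then some factor $\alpha_k^{\phi(d)}\Phi_d(\alpha_j/\alpha_k)$ lies in a prime $\mathfrak p\mid\ell$ of the splitting field; from $\prod_{e\mid d}\alpha_k^{\phi(e)}\Phi_e(\alpha_j/\alpha_k)=\alpha_j^d-\alpha_k^d$ one sees that the roots of such a pair have nonzero reductions $\bar\alpha_j,\bar\alpha_k$, and since for $\ell\nmid d$ the roots of $\Phi_d\bmod\ell$ are exactly the elements of multiplicative order $d$, we get $d=\operatorname{ord}(\bar\alpha_j/\bar\alpha_k)$. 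There being only finitely many such orders, $\mathcal O_\ell$ is finite.

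Now suppose $3\nmid n$. Every $d>1$ dividing some $m\mid n$ is coprime to $3$, so $\Psi_d(P)$ is a square and hence $\equiv0$ or $1\pmod3$, vanishing mod $3$ exactly when $d\in\mathcal O_3$; thus $\Delta(P_m)=\prod_{d\mid m}\Psi_d(P)\equiv\Delta(P)\chi_3(m)\pmod3$, where $\chi_3(m):=\prod_{d\in\mathcal O_3}\bigl(1-[d\mid m]\bigr)$ indicates that no element of $\mathcal O_3$ divides $m$. Expanding $\chi_3$ by inclusion–exclusion and using the identity $\sum_{\ell_0\mid m\mid n}\mu(n/m)=[n=\ell_0]$ yields
\[
  \delta_n(P)\equiv\Delta(P)\sum_{m\mid n}\mu\!\left(\tfrac nm\right)\chi_3(m)
  =\Delta(P)\sum_{S\subseteq\mathcal O_3}(-1)^{|S|}\,[\lcm(S)=n]\pmod3,
\]
which is $\equiv0$ once $n>\lcm(\mathcal O_3)$. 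The case $v_2(n)\le1$ splits in two. For $n$ odd the same computation modulo $4$ gives $\Delta(P_m)\equiv\Delta(P)\chi_2(m)\pmod4$, hence $4\mid\delta_n(P)$ for $n>\lcm(\mathcal O_2)$. For $n=2n_0$ with $n_0$ odd, $\delta_n(P)=\sum_{m\mid n_0}\mu(n_0/m)\bigl(\Delta(P_{2m})-\Delta(P_m)\bigr)$, and $\Delta(P_{2m})=\Delta(P_m)w_m^2$ with $w_m:=\prod_{j<k}(\alpha_j^m+\alpha_k^m)\in\mathbb Z$; if $w_m$ is odd then $w_m^2\equiv1\pmod8$, while if $w_m$ is even then $\alpha_j^m+\alpha_k^m$, hence $\alpha_j^m-\alpha_k^m$, lies in a prime above $2$, so $P_m$ has a repeated root mod $2$, whence $2\mid\Delta(P_m)$ and therefore $4\mid\Delta(P_m)$ by Stickelberger. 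Either way $\Delta(P_{2m})\equiv\Delta(P_m)\pmod4$, so every summand is $\equiv0\pmod4$.

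Combining the two cases gives $12\mid\delta_n(P)$ for every $n>\max\{\lcm(\mathcal O_2),\lcm(\mathcal O_3)\}$. The main technical point is the finiteness of the ``forbidden order'' sets $\mathcal O_\ell$, obtained by reduction modulo $\ell$; the one input beyond the paper's own machinery is Stickelberger's congruence for discriminants, which is exactly what forces the factor $4$ (rather than merely $2$) when $v_2(n)\le1$. I expect the finiteness of $\mathcal O_\ell$, and in particular the clean dichotomy for pairs whose roots have zero reduction, to be the part requiring the most care.
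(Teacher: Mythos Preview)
Your argument is correct and shares the paper's core idea: reduce modulo $\ell\in\{3,4\}$ using that the relevant factors are perfect squares (hence $\equiv 0$ or $1\bmod\ell$), identify a finite set of ``bad'' divisors controlling when the square vanishes, and show the Möbius sum dies once $n$ is beyond the lcm of that set. The execution, however, is organised differently. The paper works with $b_m^2=\Delta(P_m)/\Delta(P)$ directly and, for $n>n_0=\lcm(m_{jk})$, produces an explicit bijection between the divisors contributing $+1$ and those contributing $-1$; it then simply asserts that ``a similar argument'' handles~$4$. You instead invoke the Gauss congruence to dispose of $3\mid n$ and $4\mid n$ up front, use the $\Psi$-factorisation together with inclusion--exclusion to get the clean identity $\sum_{m\mid n}\mu(n/m)\chi_\ell(m)=\sum_{S\subseteq\mathcal O_\ell}(-1)^{|S|}[\lcm(S)=n]$, and treat the case $v_2(n)=1$ separately via the factorisation $\Delta(P_{2m})=\Delta(P_m)w_m^2$ combined with Stickelberger's congruence. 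Your finiteness argument for $\mathcal O_\ell$ (orders of $\bar\alpha_j/\bar\alpha_k$ in residue fields) is essentially the paper's $m_{jk}$ in different clothing; the point you flagged about roots with zero reduction is fine, since $\ell\nmid\Delta(P)$ makes the reductions distinct and the homogenised cyclotomic evaluates to $\bar\alpha_j^{\phi(d)}$ or $\bar\alpha_k^{\phi(d)}\neq 0$ if one of them vanishes. Net effect: the paper's bijection is slicker and uniform in~$n$, while your inclusion--exclusion plus Stickelberger makes the mod~$4$ case fully explicit rather than leaving it to ``similarly''.
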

\begin{proof}
    We prove this by showing that $\delta_n(P)$ is divisible by 3 and 4 for sufficiently large $n$. Since the arguments are similar, we only give details for divisibility by 3. What we will actually show is that
    \begin{equation} \label{divby3}
        \sum_{m\mid n} \mu\Big(\frac{n}{m}\Big) \frac{\Delta(P_m)}{\Delta(P)}
    \end{equation}
    is divisible by 3 for all but finitely many $n$. Note that $\Delta(P_m)/\Delta(P)$ is always a square, so it is congruent to either 0 or $1 \bmod 3$. Hence, we are done if we can show that $1 \bmod 3$ appears equally often with a plus sign as with a minus sign.
    
    For each pair $(j,k)$ with $j < k$, let $m_{jk}$ be the smallest positive integer such that $\alpha_j^{m_{jk}}-\alpha_k^{m_{jk}} \equiv 0 \mod 3$. Then clearly, $\Delta(P_m)$ is divisible by 3 if and only if $m_{jk} \mid m$ for some $m_{jk}$. Now let $n_0$ be the lowest common multiple of all $m_{jk}$ and let $n>n_0$. Let $n = p_1^{k_1}\dots p_l^{k_l}$ be the prime factorisation of $n$, and let $n' = p_1^{k_1-1}\dots p_l^{k_l-1}$ be the greatest common divisor of all $m$. If $m_{jk} \mid n'$ for some $m_{jk}$, then $m_{jk} \mid m$ for all $m$, which implies that $\Delta(P_m)$ is divisible by 3 for all $m$, so we assume that $n'$ is not divisible by any $m_{jk}$. Define the sets
    \begin{align*}
        A &= \{ m_{jk}: m_{jk} \mid n\}, \\
        B_1 &= \{ m \mid n: \mu(n/m) = 1 \text{ and } 3 \nmid \Delta(P_m)/\Delta(P)\} \\
        &= \{ m \mid n: \mu(n/m) = 1 \text{ and } \forall m_{jk} \in A: m_{jk} \nmid m\}, \\
        B_2 &= \{ m \mid n: \mu(n/m) = -1 \text{ and } 3 \nmid \Delta(P_m)/\Delta(P)\}.
    \end{align*}
    Then the assumption that $n > n_0$ implies that $\lcm(m_{jk} \in A) < n$, so for some prime $p$, all elements of $A$ divide $\frac np$. This implies that the following map from $B_1$ to $B_2$ is a bijection:
    \begin{displaymath}
        m \mapsto
        \begin{cases}
            mp & p \nmid \frac nm \\
            \frac mp & p \mid \frac nm.
        \end{cases}
    \end{displaymath}
    It follows that $B_1$ and $B_2$ have the same number of elements, so their contributions to \eqref{divby3} cancel out. Hence, $\delta_n(P)$ is divisibly by 3. Since this holds for all $n > n_0$, $\delta_n(P)$ is divisible by 3 for all but finitely many $n$. A similar argument shows that $\delta_n(P)$ is also divisible by 4 for all but finitely many $n$, so the theorem follows.
\end{proof}

The divisibility of $\delta_n(P)$ by $n$ and Theorem \ref{Minton} have the following corollary.
\begin{thm}
    Let $P \in \mathbb{Z}[x]$ be a monic polynomial without multiple roots, let $r_n = \frac 1n \delta_n(P)$, and let $u_j, c_j$ be as in Theorem \ref{Minton}. Then
    \begin{displaymath}
        \prod_{n=1}^\infty (1-z^n)^{r_n} = \prod_{j=1}^r u_j(z)^{-c_j}.
    \end{displaymath}
\end{thm}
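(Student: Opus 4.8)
The plan is to compare the two sides by matching their logarithmic derivatives, since both are power series with constant term $1$. Write $F(z) := \prod_{n=1}^{\infty}(1-z^{n})^{r_{n}}$. By the Gauss congruences we have $n \mid \delta_{n}(P)$, so each $r_{n}$ is an integer, and since $(1-z^{n})^{r_{n}} \equiv 1 \pmod{z^{n}}$ the product converges $z$-adically to an element of $1+z\mathbb{Z}[[z]]$. On the other side I would take the $u_{j}$ in the normalisation coming from the partial-fraction construction in the Remark following Theorem~\ref{Minton} (equivalently from \eqref{generating}), so that $u_{j}(0)=1$ for every $j$; then, using also that the $c_{j}$ are integers, $G(z) := \prod_{j=1}^{r} u_{j}(z)^{-c_{j}}$ is again a well-defined element of $1+z\mathbb{Z}[[z]]$. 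Because $F(0)=G(0)=1$, it then suffices to prove $zF'(z)/F(z) = zG'(z)/G(z)$ in $\mathbb{Q}[[z]]$.

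For $F$, differentiate $\log F(z)=\sum_{n\ge 1} r_{n}\log(1-z^{n})$ term by term:
\[
z\frac{F'(z)}{F(z)} \;=\; -\sum_{n=1}^{\infty} n\,r_{n}\,\frac{z^{n}}{1-z^{n}} \;=\; -\sum_{n=1}^{\infty}\delta_{n}(P)\sum_{k=1}^{\infty} z^{nk},
\]
using $n r_{n}=\delta_{n}(P)$ and expanding the geometric series. Collecting the coefficient of $z^{N}$ and applying the Möbius inversion identity \eqref{Deltasum}, namely $\sum_{n\mid N}\delta_{n}(P)=\Delta(P_{N})$, the right-hand side becomes $-\sum_{N\ge 1}\Delta(P_{N})z^{N}=-f_{P}(z)$. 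For $G$, differentiating $\log G(z)=-\sum_{j}c_{j}\log u_{j}(z)$ gives $zG'(z)/G(z)=-\sum_{j}c_{j}\,zu_{j}'(z)/u_{j}(z)=-f_{P}(z)$ directly by Theorem~\ref{Minton}. Hence $z\bigl(F'/F-G'/G\bigr)=0$ in $\mathbb{Q}[[z]]$, so $\log(F/G)$ has vanishing derivative; since $F/G\in 1+z\mathbb{Q}[[z]]$ this forces $\log(F/G)=0$, i.e.\ $F=G$.

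The computation is entirely routine; what needs care is purely formal. One should check that $F$, the logarithms, and termwise differentiation all make sense as operations on $1+z\mathbb{Q}[[z]]$, and — the only genuine subtlety — fix the normalisation $u_{j}(0)=1$ so that the constant terms on the two sides agree. (For an arbitrary representation $f_{P}=\sum_{j}c_{j}\,zu_{j}'/u_{j}$ with $u_{j}\in\mathbb{Z}[z]$ the equality holds only up to the nonzero constant $\prod_{j}u_{j}(0)^{-c_{j}}$; the relation $f_{P}(0)=0$ shows $\sum_{j}c_{j}\operatorname{ord}_{0}(u_{j})=0$, so this constant is well defined, and it equals $1$ exactly for the normalisation $u_{j}(0)=1$.) Beyond this bookkeeping, the only inputs are the divisibility $n\mid\delta_{n}(P)$, which makes each $r_{n}$ an integer, and Theorem~\ref{Minton}.
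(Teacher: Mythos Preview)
Your proof is correct and follows essentially the same route as the paper: both sides are identified as having logarithmic derivative $-f_P(z)$ (via \eqref{Deltasum} on one side and Theorem~\ref{Minton} on the other), and the constant of integration is fixed by checking the value at $z=0$. Your treatment of the normalisation $u_j(0)=1$ is slightly more explicit than the paper's, which simply observes that each $u_j$ is a product of factors $(1-\beta^{-1}z)$ so that the constant term is automatically~$1$.
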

\begin{proof}
    From Theorem \ref{Minton} and the following remark, we know that
    \begin{displaymath}
        f_P(z) = \sum_{n=1}^\infty \Delta(P_n)z^n = \sum_{j=1}^r c_j\frac{zu_j'(z)}{u_j(z)},
    \end{displaymath}
    where $u_j \in \mathbb{Z}[z]$ and $c_j \in \mathbb{Z}$. By \eqref{Deltasum}, $f_P$ can also be written as
    \begin{displaymath}
        f_P(z) = \sum_{n=1}^\infty \frac{\delta_n(P)z^n}{1-z^n} = \sum_{n=1}^\infty \frac{nr_nz^n}{1-z^n}.
    \end{displaymath}
    Observe that both of these expressions for $f_P$ involve a logarithmic derivative:
    \begin{displaymath}
        f_P(z) = z\frac{d}{dz} \log \prod_{j=1}^r u_j(z)^{c_j} = -z\frac{d}{dz} \log \prod_{n=1}^\infty (1-z^n)^{r_n}.
    \end{displaymath}
    This implies that
    \begin{displaymath}
        \prod_{n=1}^\infty (1-z^n)^{r_n} = C\prod_{j=1}^r u_j(z)^{-c_j}
    \end{displaymath}
    for some constant $C$. Since each $u_j$ is a product of factors of the form $(1-\beta^{-1}z)$, the constant term of $\prod_{j=1}^r u_j(z)^{-c_j}$ equals 1, so $C=1$.
\end{proof}

\subsection{Factors of $U(n)$}
One can easily identify a lot of integer factors of $U(n)$. Recall that $U(n)$ is given by
\begin{displaymath}
    U(n) = \prod_{j_s=1}^d \Bigg( \prod_{m_s \in \mathcal{D}_+(n)} \alpha_{j_s}^{m_s} - \prod_{m_s \in \mathcal{D}_-(n)} \alpha_{j_s}^{m_s}\Bigg),
\end{displaymath}
where we have a separate index $j_s$ for each $m \mid n$ satisfying $\mu(n/m) \neq 0$. Note that $s$ runs from $1$ to $2^l$, where $l$ is the number of distinct prime factors of $n$. We obtain integer factors of $U(n)$ by requiring some of the indices to be equal. More specifically, let $\mathcal{P}$ be a partition of the set $\{1,\dots,2^l\}$ into at most $d$ blocks, and let
\begin{displaymath}
    U(n, \mathcal{P}) := \sideset{}{^*}\prod_{j_s=1}^d \Bigg( \prod_{m_s \in \mathcal{D}_+(n)} \alpha_{j_s}^{m_s} - \prod_{m_s \in \mathcal{D}_-(n)} \alpha_{j_s}^{m_s}\Bigg),
\end{displaymath}
where the product again has a separate index $j_s$ for $s \in \{1,\dots,2^l\}$, and we only include the factors that satisfy $j_s = j_{s'}$ if and only if $s$ and $s'$ are in the same block of $\mathcal{P}$. Then $U(n, \mathcal{P})$ is an integer, and
\begin{displaymath}
    U(n) = \prod_{\mathcal{P}} U(n, \mathcal{P}).
\end{displaymath}

This complicated definition is probably best clarified by giving some concrete examples. First, if $n=p^k$ is a prime power, we have the two partitions $\mathcal{P}_1 = \{\{1,2\}\}$ and $\mathcal{P}_2 = \{\{1\},\{2\}\}$ giving rise to the factors
\begin{align*}
    U(p^k, \mathcal{P}_1) &= \prod_{j=1}^d (\alpha_j^{p^k}-\alpha_j^{p^{k-1}}), \\
    U(p^k, \mathcal{P}_2) &= \prod_{\substack{j_1,j_2=1 \\ j_1 \neq j_2}}^d (\alpha_{j_1}^{p^k}-\alpha_{j_2}^{p^{k-1}}).
\end{align*}

If $n=pq$ is the product of two primes, we have
\begin{displaymath}
    U(n) = \prod_{j_1,j_2,j_3,j_4=1}^d (\alpha_{j_1}^n\alpha_{j_2}-\alpha_{j_3}^p\alpha_{j_4}^q).
\end{displaymath}
Assuming $d \geq 4$, the above method gives rise to 15 factors of $U(n)$, corresponding to the 15 partitions of the set $\{1,2,3,4\}$. We have the following partitions:
\begin{align*}
    \mathcal{P}_1 &= \{\{1, 2, 3, 4\}\}, \\
    \mathcal{P}_2 &= \{\{1, 2, 3\}, \{4\}\}, \\
    \mathcal{P}_3 &= \{\{1, 2, 4\}, \{3\}\}, \\
    \mathcal{P}_4 &= \{\{1, 3, 4\}, \{2\}\}, \\
    \mathcal{P}_5 &= \{\{2, 3, 4\}, \{1\}\}, \\
    \mathcal{P}_6 &= \{\{1, 2\}, \{3, 4\}\}, \\
    \mathcal{P}_7 &= \{\{1, 2\}, \{3\}, \{4\}\}, \\
    \mathcal{P}_8 &= \{\{1\}, \{2\}, \{3, 4\}\}, \\
    \mathcal{P}_9 &= \{\{1, 3\}, \{2, 4\}\}, \\
    \mathcal{P}_{10} &= \{\{1, 3\}, \{2\}, \{4\}\}, \\
    \mathcal{P}_{11} &= \{\{1\}, \{3\}, \{2, 4\}\}, \\
    \mathcal{P}_{12} &= \{\{1, 4\}, \{2, 3\}\}, \\
    \mathcal{P}_{13} &= \{\{1, 4\}, \{2\}, \{3\}\}, \\
    \mathcal{P}_{14} &= \{\{1\}, \{4\}, \{2, 3\}\}, \\
    \mathcal{P}_{15} &= \{\{1\}, \{2\}, \{3\}, \{4\}\},
\end{align*}
corresponding to the factors
\begin{align*}
    U(n, \mathcal{P}_1) &= \prod_{\substack{j_1,j_2,j_3,j_4=1 \\ j_1=j_2=j_3=j_4}}^d (\alpha_{j_1}^n\alpha_{j_2}-\alpha_{j_3}^p\alpha_{j_4}^q), \qquad
    U(n, \mathcal{P}_2) = \prod_{\substack{j_1,j_2,j_3,j_4=1 \\ j_1=j_2=j_3\neq j_4}}^d (\alpha_{j_1}^n\alpha_{j_2}-\alpha_{j_3}^p\alpha_{j_4}^q), \\
    U(n, \mathcal{P}_3) &= \prod_{\substack{j_1,j_2,j_3,j_4=1 \\ j_1=j_2=j_4\neq j_3}}^d (\alpha_{j_1}^n\alpha_{j_2}-\alpha_{j_3}^p\alpha_{j_4}^q), \qquad
    U(n, \mathcal{P}_4) = \prod_{\substack{j_1,j_2,j_3,j_4=1 \\ j_1=j_3=j_4\neq j_2}}^d (\alpha_{j_1}^n\alpha_{j_2}-\alpha_{j_3}^p\alpha_{j_4}^q), \displaybreak[2]\\
    U(n, \mathcal{P}_5) &= \prod_{\substack{j_1,j_2,j_3,j_4=1 \\ j_2=j_3=j_4\neq j_1}}^d (\alpha_{j_1}^n\alpha_{j_2}-\alpha_{j_3}^p\alpha_{j_4}^q), \qquad
    U(n, \mathcal{P}_6) = \prod_{\substack{j_1,j_2,j_3,j_4=1 \\ j_1=j_2\neq j_3=j_4}}^d (\alpha_{j_1}^n\alpha_{j_2}-\alpha_{j_3}^p\alpha_{j_4}^q), \displaybreak[2]\\
    U(n, \mathcal{P}_7) &= \prod_{\substack{j_1,j_2,j_3,j_4=1 \\ j_1=j_2 \neq j_3 \neq j_4}}^d (\alpha_{j_1}^n\alpha_{j_2}-\alpha_{j_3}^p\alpha_{j_4}^q), \qquad
    U(n, \mathcal{P}_8) = \prod_{\substack{j_1,j_2,j_3,j_4=1 \\ j_1 \neq j_2 \neq j_3 = j_4}}^d (\alpha_{j_1}^n\alpha_{j_2}-\alpha_{j_3}^p\alpha_{j_4}^q), \displaybreak[2]\\
    U(n, \mathcal{P}_9) &= \prod_{\substack{j_1,j_2,j_3,j_4=1 \\ j_1=j_3\neq j_2=j_4}}^d (\alpha_{j_1}^n\alpha_{j_2}-\alpha_{j_3}^p\alpha_{j_4}^q), \qquad
    U(n, \mathcal{P}_{10}) = \prod_{\substack{j_1,j_2,j_3,j_4=1 \\ j_1=j_3\neq j_2 \neq j_4}}^d (\alpha_{j_1}^n\alpha_{j_2}-\alpha_{j_3}^p\alpha_{j_4}^q), \displaybreak[2]\\
    U(n, \mathcal{P}_{11}) &= \prod_{\substack{j_1,j_2,j_3,j_4=1 \\ j_1 \neq j_3 \neq j_2 = j_4}}^d (\alpha_{j_1}^n\alpha_{j_2}-\alpha_{j_3}^p\alpha_{j_4}^q), \qquad   
    U(n, \mathcal{P}_{12}) = \prod_{\substack{j_1,j_2,j_3,j_4=1 \\ j_1=j_4\neq j_2=j_3}}^d (\alpha_{j_1}^n\alpha_{j_2}-\alpha_{j_3}^p\alpha_{j_4}^q), \displaybreak[2]\\
    U(n, \mathcal{P}_{13}) &= \prod_{\substack{j_1,j_2,j_3,j_4=1 \\ j_1=j_4 \neq j_2 \neq j_3}}^d (\alpha_{j_1}^n\alpha_{j_2}-\alpha_{j_3}^p\alpha_{j_4}^q), \qquad
    U(n, \mathcal{P}_{14}) = \prod_{\substack{j_1,j_2,j_3,j_4=1 \\ j_1 \neq j_4 \neq j_2 = j_3}}^d (\alpha_{j_1}^n\alpha_{j_2}-\alpha_{j_3}^p\alpha_{j_4}^q), \\
    U(n, \mathcal{P}_{15}) &= \prod_{\substack{j_1,j_2,j_3,j_4=1 \\ j_1 \neq j_2 \neq j_3 \neq j_4}}^d (\alpha_{j_1}^n\alpha_{j_2}-\alpha_{j_3}^p\alpha_{j_4}^q),
\end{align*}
where expressions like $j_1 \neq j_2 \neq j_3$ should be interpreted as saying that $j_1 \neq j_3$ as well. With the above notation, we have
\begin{displaymath}
    U(n) = \prod_{j=1}^{15} U(n, \mathcal{P}_j).
\end{displaymath}
The number of factors of $U(n)$ increases very quickly as the number of distinct prime factors increases: when $n$ has three distinct prime factors, $U(n)$ already has so many factors that it is not practical to write them all out.

If $P$ is reciprocal, we can split these into even more factors, because we can additionally require that some of the roots of $P$ appearing in the product be inverses of each other. For instance, in the above example with $n=pq$, $U(n, \mathcal{P}_2)$ has the factors
\begin{align*}
    \prod_{j=1}^d (\alpha_j^n\alpha_j-\alpha_j^p\alpha_j^{-q}) &= \prod_{j=1}^d (\alpha_j^{n+1}-\alpha_j^{p-q}), \\
    \prod_{\substack{j,k=1 \\ \alpha_j^{-1} \neq \alpha_k}}^d (\alpha_j^n\alpha_j -\alpha_j^p\alpha_k^q) &= \prod_{\substack{j,k=1 \\ \alpha_j^{-1} \neq \alpha_k}}^d (\alpha_j^{n+1}-\alpha_j^p\alpha_k^q).
\end{align*}

\begin{thm} \label{Undiv}
    Let $P$ be a polynomial with constant coefficient $\pm 1$, let $k$ be an integer and let $n$ be an integer with the property that $\phi(n)$ is divisible by $\#(R/kR)^\times$. Then $U(n)$ is divisible by $k^d$.
\end{thm}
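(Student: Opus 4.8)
\section*{Proof proposal}

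The plan is to reduce this to the same unit‑order argument that drives the proof of Theorem \ref{Discdiv}. As there, let $K = \mathbb{Q}(\alpha_1,\dots,\alpha_d)$ and $R = \mathcal{O}_K$; since $a_0 = \pm1$ we have $\prod_{j=1}^d\alpha_j = (-1)^da_0 = \pm1$, so each $\alpha_j$ is a unit of $R$, and its image in the finite group $(R/kR)^\times$ has order dividing $N := \#(R/kR)^\times$. By hypothesis $N \mid \phi(n)$, so Lagrange's theorem gives
\begin{displaymath}
    \alpha_j^{\phi(n)} \equiv 1 \pmod{kR}\qquad (j = 1,\dots,d).
\end{displaymath}
The task is therefore to exhibit a sub‑product of $U(n)$ that equals, up to a unit of $R$, the product $\prod_{j=1}^d(\alpha_j^{\phi(n)}-1)$, which by the displayed congruences lies in $k^dR$.

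For this I would use the factorisation $U(n) = \prod_{\mathcal{P}}U(n,\mathcal{P})$ from Section 7 and single out the block partition $\mathcal{P}_0 = \{\{1,\dots,2^l\}\}$ (always admissible since it has one block). By definition $U(n,\mathcal{P}_0) = \prod_{j=1}^d\bigl(\prod_{m\in\mathcal{D}_+(n)}\alpha_j^{m} - \prod_{m\in\mathcal{D}_-(n)}\alpha_j^{m}\bigr) = \prod_{j=1}^d(\alpha_j^A - \alpha_j^B)$, where $A := \sum_{m\in\mathcal{D}_+(n)}m$ and $B := \sum_{m\in\mathcal{D}_-(n)}m$. The arithmetic input is the identity
\begin{displaymath}
    A - B = \sum_{m\mid n}\mu\Big(\frac nm\Big)m = \phi(n),
\end{displaymath}
that is, Euler's totient written as the Dirichlet convolution $\mu * \mathrm{id}$ (the omitted divisors are exactly those with $\mu(n/m)=0$). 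Hence $U(n,\mathcal{P}_0) = \bigl(\prod_j\alpha_j\bigr)^{B}\prod_{j=1}^d(\alpha_j^{\phi(n)}-1) = \pm\prod_{j=1}^d(\alpha_j^{\phi(n)}-1)$, again using $\prod_j\alpha_j = \pm1$.

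Now assemble: each factor $\alpha_j^{\phi(n)}-1$ lies in $kR$, so $U(n,\mathcal{P}_0)\in k^dR$; since $U(n,\mathcal{P}_0)$ is a resultant‑type integer dividing $U(n)$, we get $U(n)\in k^dR$; and because $R$ is integrally closed we have $k^dR\cap\mathbb{Z} = k^d\mathbb{Z}$, so $k^d\mid U(n)$ in $\mathbb{Z}$ (the case $U(n)=0$ being trivial). I do not expect a real obstacle here: the only steps needing care are the bookkeeping that isolates $U(n,\mathcal{P}_0)$ as a genuine factor of $U(n)$ and the verification of the totient identity $A-B=\phi(n)$ — a sanity check being that for $n=p$ prime it recovers the familiar diagonal factor $\alpha_j^p-\alpha_j$ of $U(p)$ — after which everything is the $R/kR$ unit argument already used for $\Delta(P_n)$.
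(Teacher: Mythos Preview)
Your proposal is correct and follows essentially the same route as the paper: both isolate the diagonal factor $U(n,\mathcal{P}_0)=\prod_{j}(\alpha_j^A-\alpha_j^B)$, use the identity $A-B=\sum_{m\mid n}\mu(n/m)m=\phi(n)$ to rewrite it as $\pm\prod_j(\alpha_j^{\phi(n)}-1)$, and then apply the unit argument in $R/kR$. Your write-up is a bit more explicit about why the prefactor is a unit and why $k^dR\cap\mathbb{Z}=k^d\mathbb{Z}$, but the underlying argument is the same.
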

\begin{proof}
    Consider the following factor of $U(n)$:
    \begin{displaymath}
        \prod_{j=1}^d \Big(\prod_{m\in\mathcal{D}_+(n)} \alpha_j^m-\prod_{m\in\mathcal{D}_-(n)} \alpha_j^m\Big) = \pm a_0^{\sum_{m\in\mathcal{D}_-(n)}m} \prod_{j=1}^d (\alpha_j^{\phi(n)}-1).
    \end{displaymath}
    By assumption, $\alpha_j^{\phi(n)} \equiv 1 \bmod k$ for all $j$, so each factor of this product is divisible by $k$. Hence, $U(n)$ is divisible by $k^d$.
\end{proof}
\begin{rmk}
    By Dirichlet's theorem on arithmetic progressions, there are infinitely many primes $p$ for which $\phi(p)$ is divisible by $\#(R/kR)^\times$. Therefore, there are infinitely many primes (and hence infinitely many integers) $n$ for which $U(n)$ is divisible by $k^d$.
\end{rmk}

\begin{ex}
    Let $P(x) = x^2-3x+1$ and $k=2$. As we saw before, $\#(R/kR)^\times = 3$, so if $3 \mid \phi(n)$, $U(n)$ is divisible by $2^2$. For example, if we take $n=7$, we get $U(n) = 2^6\cdot3^2\cdot5^2\cdot7^2$, which is indeed divisible by $2^2$.
\end{ex}

Similarly to Theorem \ref{Discdiv}, the condition in Theorem \ref{Undiv} is often stronger than needed. For example, let us consider what happens when $P$ is any reciprocal quadratic polynomial.

First, we examine divisibility by 2. If $P$ is reducible modulo 2, then $P \equiv (x+1)^2 \bmod 2$, so both roots of $P$ satisfy $\alpha \equiv 1 \bmod 2$. Hence, all factors of $U(n)$ are divisible by 2, which implies that $U(n)$ is divisible by $2^{2^{2^l}}$. If $P$ is irreducible modulo 2, then $\alpha^2 \equiv \alpha^{-1} \bmod 2$ and $\alpha^3 \equiv 1 \bmod 2$. If $n = p^k$ is a prime power with $p \neq 3$, then
\begin{displaymath}
    U(n) = \prod_{j_1,j_2=1}^2 (\alpha_{j_1}^{p^k}-\alpha_{j_2}^{p^{k-1}}).
\end{displaymath}
Now either $p^k \equiv p^{k-1} \equiv 1 \bmod 3$, or $p^k \equiv - p^{k-1} \mod 3$. In both cases, we see that the above product contains two factors that are divisible by 2, so $U(n)$ is divisible by $2^2$. If $p = 3$ and $k > 1$, all factors are divisible by 2, so $U(n)$ is divisible by $2^4$.

Next we consider divisibility of $U(p)$ by 3. If $P$ is irreducible modulo 3, then $P(x) \equiv x^2+1 \bmod 3$, which implies that the roots have order 4 in $(R/3R)^\times$, and $\alpha_1 \equiv -\alpha_2 \bmod 3$. This implies that for odd $p$, $U(p)$ has two factors which are divisible by 3, so $U(p)$ is divisible by $3^2$.

If $P$ is irreducible modulo 3, then $\alpha_1 \equiv \alpha_2 \equiv \pm 1 \bmod 3$, which implies that for odd $p$, $U(p)$ is divisible by $3^4$.

A similar argument shows that $U(p)$ is divisible by 5 for $p > 2$. These arguments cannot be directly applied to higher primes or polynomials of higher degree, and indeed it turns out that $U(n)$ is not in general divisible by $p$.

\begin{ex}
    Let $P(x) = x^2-3x+1$. Since this a quadratic reciprocal polynomial, the above results imply that $U(p)$ is divisible by 2, 3 and 5. However, $U(p)$ is not in general divisible by 7: by looking at the orders of the roots of $P$ in $(R/7R)^\times$, one can show that $U(p)$ is divisible by 7 if and only if $p \equiv \pm 1 \bmod 8$.
\end{ex}

\begin{ex}
    Let $P(x) = x^4+x^3-x^2+x+1$. Then $U(p)$ is divisible by 5 if and only if $p \equiv \pm 1 \bmod 26$ or $p \equiv \pm 5 \bmod 26$. 
\end{ex}

\begin{ex}
    Let $P(x) = x^{10} + x^8 + x^7 + x^5 + x^3 + x^2 + 1$. Then $U(p)$ is not in general divisible by 2, 3 or 5.
\end{ex}

Since the number of factors of $U(n)$ increases as the number of distinct prime factors of $n$ increases, it might still be possible to prove that $U(n)$ is divisible by small primes when $n$ has sufficiently many distinct prime factors.

\begin{thm} \label{Undiv2}
    Let $P \in \mathbb{Z}[x]$ be a monic polynomial with constant coefficient $a_0 = \pm 1$ and let $k$ and $n$ be integers. For each root $\alpha_j$ of $P$, let $m_j$ be the order of $\alpha_j$ in the group $(R/kR)^\times$. If $m_j \mid \phi(n)$ for any $j$, then $k \mid U(n)$.
\end{thm}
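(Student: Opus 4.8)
The plan is to follow the proof of Theorem~\ref{Undiv}, but to extract only a single well-chosen factor of $U(n)$ instead of an entire power of $k$. Recall from the discussion of the factors of $U(n)$ that
\[
    U(n) = \prod_{j_1,\dots,j_{2^l}=1}^d \Bigg(\prod_{m_s \in \mathcal{D}_+(n)} \alpha_{j_s}^{m_s} - \prod_{m_s \in \mathcal{D}_-(n)} \alpha_{j_s}^{m_s}\Bigg),
\]
where $l$ is the number of distinct prime factors of $n$; since every $\alpha_j$ is an algebraic integer, each of these $d^{2^l}$ factors lies in $R$. Let $j_0$ be an index with $m_{j_0}\mid\phi(n)$, and single out the factor obtained by setting $j_1=\dots=j_{2^l}=j_0$, namely $\alpha_{j_0}^{S_+}-\alpha_{j_0}^{S_-}$, where $S_{\pm}:=\sum_{m\in\mathcal{D}_{\pm}(n)}m$.

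The next step uses the arithmetic identity $S_+-S_-=\sum_{m\mid n}\mu(n/m)\,m=\phi(n)$ (i.e.\ the Dirichlet convolution $\mu*\mathrm{id}=\phi$), which rewrites this factor as $\alpha_{j_0}^{S_-}\big(\alpha_{j_0}^{\phi(n)}-1\big)$. Since $a_0=\pm1$, every root of $P$ is a unit of $R$, so $\alpha_{j_0}^{S_-}$ is a unit; and because $m_{j_0}$, the order of $\alpha_{j_0}$ in $(R/kR)^\times$, divides $\phi(n)$, we have $\alpha_{j_0}^{\phi(n)}\equiv 1\bmod kR$. Hence this distinguished factor lies in $kR$ (and if it happens to vanish, then $U(n)=0$ and there is nothing to prove).

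Finally, as the remaining $d^{2^l}-1$ factors all lie in $R$, multiplying them by the distinguished factor gives $U(n)\in kR$. But $U(n)$ is an ordinary integer, being a resultant of polynomials in $\mathbb{Z}[x]$; writing $U(n)=kr$ with $r\in R$ forces $r=U(n)/k\in\mathbb{Q}\cap R=\mathbb{Z}$, so $k\mid U(n)$.

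The argument is short and presents no serious obstacle; the only mildly delicate points are the Dirichlet-convolution identity and the final descent from divisibility in $R$ to divisibility in $\mathbb{Z}$, and the main idea is simply that isolating the ``all indices equal'' term is exactly what converts the single congruence $\alpha_{j_0}^{\phi(n)}\equiv 1\bmod kR$ into a divisibility statement about $U(n)$ itself.
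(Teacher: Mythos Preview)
Your proof is correct and follows essentially the same approach as the paper: isolate the ``all indices equal $j_0$'' factor, rewrite it as $\alpha_{j_0}^{S_-}(\alpha_{j_0}^{\phi(n)}-1)$ via $S_+-S_-=\phi(n)$, and use $m_{j_0}\mid\phi(n)$ to conclude it lies in $kR$. Your version is in fact more careful than the paper's, which omits both the explicit appeal to $\mu*\mathrm{id}=\phi$ and the final descent $kR\cap\mathbb{Z}=k\mathbb{Z}$.
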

\begin{proof}
    Let $j$ be such that $m_j \mid \phi(n)$. Then
    \begin{displaymath}
        \prod_{m \in \mathcal{D}_+(n)} \alpha_j^m - \prod_{m \in \mathcal{D}_-(n)} \alpha_j^m = \prod_{m \in \mathcal{D}_-(n)} \alpha_j^m (\alpha^{\phi(n)}-1) \equiv 0 \mod k,
    \end{displaymath}
    so $U(n)$ is divisible by $k$.
\end{proof}

\begin{cor}
    Let $P \in \mathbb{Z}[x]$ be a monic polynomial with constant coefficient $a_0 = \pm 1$. If $P(1) \neq 0$, then for all $n$, $U(n)$ is divisible by $P(1)$.
\end{cor}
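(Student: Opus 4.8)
The plan is to locate one explicit integer divisor of $U(n)$ that manifestly carries a factor of $P(1)$. Recall from the subsection on factors of $U(n)$ that $U(n)=\prod_{\mathcal{P}}U(n,\mathcal{P})$ with every $U(n,\mathcal{P})\in\mathbb{Z}$, so it suffices to exhibit a single partition $\mathcal{P}$ with $P(1)\mid U(n,\mathcal{P})$. I would take $\mathcal{P}_1$ to be the partition of $\{1,\dots,2^l\}$ into one block, forcing all the indices $j_s$ to coincide; then $U(n,\mathcal{P}_1)$ is exactly the factor considered at the start of the proof of Theorem~\ref{Undiv}, namely
\[
U(n,\mathcal{P}_1)=\prod_{j=1}^d\Big(\prod_{m\in\mathcal{D}_+(n)}\alpha_j^m-\prod_{m\in\mathcal{D}_-(n)}\alpha_j^m\Big)=\pm\,a_0^{\sum_{m\in\mathcal{D}_-(n)}m}\prod_{j=1}^d\big(\alpha_j^{\phi(n)}-1\big),
\]
where the exponent bookkeeping uses $\sum_{m\mid n}\mu(n/m)\,m=\phi(n)$. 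Since $a_0=\pm1$, this collapses to $U(n,\mathcal{P}_1)=\pm\prod_{j=1}^d(\alpha_j^{\phi(n)}-1)=\pm\,P_{\phi(n)}(1)$.

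The second ingredient is the elementary fact that $P(1)\mid P_m(1)$ for every monic $P\in\mathbb{Z}[x]$ and every $m\ge1$. I would obtain this from $x^m-1=(x-1)(1+x+\cdots+x^{m-1})$: evaluating the product over the roots of $P$ gives
\[
P_m(1)=\prod_{j=1}^d(1-\alpha_j^m)=\Big(\prod_{j=1}^d(1-\alpha_j)\Big)\prod_{j=1}^d\big(1+\alpha_j+\cdots+\alpha_j^{m-1}\big)=P(1)\cdot\operatorname{Res}\!\big(P,\,1+x+\cdots+x^{m-1}\big),
\]
and the resultant lies in $\mathbb{Z}$ because $P$ is monic with integer coefficients. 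Taking $m=\phi(n)$ yields $P(1)\mid P_{\phi(n)}(1)$.

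Combining the two steps, $P(1)$ divides $P_{\phi(n)}(1)=\pm\,U(n,\mathcal{P}_1)$, and $U(n,\mathcal{P}_1)$ divides $U(n)$ since each $U(n,\mathcal{P})$ is an integer; hence $P(1)\mid U(n)$, which is the claim. There is no real obstacle here: the only points requiring a little care are verifying that the one-block partition really reproduces the factor handled in the proof of Theorem~\ref{Undiv}, and the degenerate case $n=1$ (where $P_1=P$ and the statement is immediate). I would also note that the hypothesis $a_0=\pm1$ is not in fact needed — without it one still has $U(n,\mathcal{P}_1)=(\text{an integer})\cdot P_{\phi(n)}(1)$, so $P(1)\mid U(n)$ regardless; the hypothesis is merely inherited from Theorem~\ref{Undiv2}.
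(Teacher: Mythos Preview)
Your proof is correct and follows essentially the same route as the paper: both isolate the integer factor $U(n,\mathcal{P}_1)=\pm a_0^{\ast}\prod_{j}(\alpha_j^{\phi(n)}-1)=\pm P_{\phi(n)}(1)$ and observe that $P(1)$ divides it. The paper compresses this into the single sentence ``take $k=P(1)$ in Theorem~\ref{Undiv2}'', but your explicit derivation of $P(1)\mid P_{\phi(n)}(1)$ via the resultant identity is arguably cleaner, since the literal hypothesis of Theorem~\ref{Undiv2} (that some $m_j$ divides $\phi(n)$) can fail---e.g.\ for $P(x)=x^2-4x+1$ and $n=2$ one has $P(1)=-2$ and both $m_j=2\nmid\phi(2)=1$---so the paper is really invoking the \emph{proof} of that theorem rather than its statement; your observation that the hypothesis $a_0=\pm1$ is unnecessary is also correct.
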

\begin{proof}
    This follows by taking $k=P(1)$ in Theorem \ref{Undiv2}.
\end{proof}
\begin{rmk}
    If $P(1) = 0$, then $U(n) = 0$ for all $n$.
\end{rmk}

\bibliographystyle{plain}
\bibliography{references.bib}

\end{document}